\numberwithin{equation}{section}
\newtheorem{proposition}{Proposition}[section]
\newtheorem{theorem}[proposition]{Theorem}
\newtheorem{lemma}[proposition]{Lemma}
\newtheorem{definition}[proposition]{Definition}
\newtheorem{remark}[proposition]{Remark}
\renewenvironment{proof}{\smallskip\noindent\emph{\textbf{Proof.}}%
  \hspace{1pt}}{\hspace{-5pt}{\nobreak\quad\nobreak\hfill\nobreak%
    $\square$\vspace{2pt}\par}\smallskip\goodbreak}
\newenvironment{proofof}[1]{\smallskip\noindent{\textbf{Proof~of~#1.}}%
  \hspace{1pt}}{\hspace{-5pt}{\nobreak\quad\nobreak\hfill\nobreak%
    $\square$\vspace{2pt}\par}\smallskip\goodbreak}
\newcommand{\pint}[1]{\mathaccent23{#1}}
\newcommand{\C}[1]{\mathbf{C}^{#1}}
\newcommand{\Cc}[1]{\mathbf{C}_c^{#1}}
\newcommand{\BV}{\mathbf{BV}}
\newcommand{\PC}{\mathbf{PC}}
\newcommand{\PLC}{\mathbf{PLC}}
\renewcommand{\L}[1]{{\mathbf{L}^#1}}
\newcommand{\Wloc}[2]{{\mathbf{W}_{\mathbf{loc}}^{#1,#2}}}
\newcommand{\modulo}[1]{{\left|#1\right|}}
\newcommand{\norma}[1]{{\left\|#1\right\|}}
\newcommand{\caratt}[1]{{\chi_{\strut#1}}}
\newcommand{\reali}{{\mathbb{R}}}
\newcommand{\naturali}{{\mathbb{N}}}
\newcommand{\interi}{{\mathbb{Z}}}
\renewcommand{\epsilon}{\varepsilon}
\renewcommand{\phi}{\varphi}
\renewcommand{\theta}{\vartheta}
\renewcommand{\O}{\mathcal{O}(1)}
\newcommand{\tv}{\mathinner{\rm TV}}
\newcommand{\sgn}{\mathop{\rm sgn}}
\newcommand{\sgnp}{\operatorname{sgn}^{+}}
\newcommand{\sgnm}{\operatorname{sgn}^{-}}
\newcommand{\Lip}{\mathinner\mathbf{Lip}}
\renewcommand{\d}[1]{\mathinner{\mathrm{d}{#1}}}
\DeclareMathOperator*{\esslim}{ess\,lim}
\DeclareMathOperator*{\essinf}{ess\,inf}
\DeclareMathOperator*{\esssup}{ess\,sup}
\newcommand{\p}{\mathbf{p}}
\newcommand{\ub}{\boldsymbol{u_b}}
\title{Stability of the 1D IBVP for a\\ Non Autonomous Scalar Conservation Law}
\author{Rinaldo M.~Colombo\footnote{\texttt{rinaldo.colombo@unibs.it}}
  \qquad Elena Rossi\footnote{\texttt{elena.rossi@unibs.it}} \\ INDAM
  Unit, University of Brescia, Italy}
\date{}
\begin{document}

\maketitle

\begin{abstract}

  \noindent We prove the stability with respect to the flux of
  solutions to initial -- boundary value problems for scalar
  non autonomous conservation laws in one space dimension. Key
  estimates are obtained through a careful construction of the
  solutions.

  \medskip

  \noindent\textit{2010~Mathematics Subject Classification:} 35L65,
  35L04

  \medskip

  \noindent\textit{Keywords:} Conservation Laws, Boundary Value
  Problems for Conservation Laws
\end{abstract}


\section{Introduction}
\label{sec:Intro}

This paper deals with the Initial Boundary Value Problem (IBVP) for a
possibly non autonomous scalar conservation law on a half--line
\begin{equation}
  \label{eq:1}
  \left\{
    \begin{array}{l@{\qquad}r@{\,}c@{\,}l}
      \partial_t u + \partial_x f (t,u) = 0
      & (t,x)
      & \in
      & [0,T] \times \reali_+
      \\
      u (0,x) = u_o (x)
      & x
      & \in
      & \reali_+
      \\
      u (t, 0) = u_b (t)
      & t
      & \in
      & [0,T] \,,
    \end{array}
  \right.
\end{equation}
or on a segment
\begin{equation}
  \label{eq:1sg}
  \left\{
    \begin{array}{l@{\qquad}r@{\,}c@{\,}l}
      \partial_t u + \partial_x f (t,u) = 0
      & (t,x)
      & \in
      & [0,T] \times [0,L]
      \\
      u (0,x) = u_o (x)
      & x
      & \in
      & [0,L]
      \\
      u (t, 0) = u_{b,1} (t)
      & t
      & \in
      & [0,T]
      \\
      u (t,L) = u_{b,2} (t)
      & t
      & \in
      & [0,T] \,.
    \end{array}
  \right.
\end{equation}
For these problems, we complete the basic well posedness and
stability results. That is, we detail below the proofs of the
existence of solutions and of their stability with respect to the
flow. For the Lipschitz continuous dependence of solutions on initial
and boundary data we refer to~\cite{BardosLerouxNedelec, bordo}.

With a slight abuse of notation, we refer to the non autonomous
(time dependent), respectively autonomous (time independent), case
as to the case where the flux $f$ depends explicitly on time $t$ or
not. In both cases, boundary data are time dependent.

Conservation Laws are typically studied either in the case of one
dimensional systems or of scalar multi--dimensional equations. In the
former case, we refer to~\cite{AmadoriColombo97, DuboisLeFloch,
  Goodman} for the basic existence results and for discussions on the
very definition of solution to the initial boundary value
problem. Differently from these works, the present paper deals with
the stability with respect to the flow and covers also the case of a
time dependent flow.

In the scalar multi--dimensional case, the key reference
is~\cite{BardosLerouxNedelec}, see also~\cite{bordo, DafermosBook,
  MalekEtAlBook, Martin, OttoPhD, OttoCR, Serre2, Vovelle}, which
considers the existence of solutions and their continuous dependence
on initial and boundary data but only on bounded domains. Here, in
addition, we deal also with unbounded domains and ensure the stability
with respect to the flow, though limited to the one dimensional
case. We stress here the key role played by the definition of
solutions to~\eqref{eq:1} or~\eqref{eq:1sg} as provided
in~\cite{Martin, Vovelle}. Indeed, this definition is stable under
$\L1$--convergence, see~\cite[Chapter~2, Remark~7.33]{MalekEtAlBook},
and its use allows to avoid all issues related to the limit of traces
converging to the trace of the limit.

Recall that in the case of the autonomous Cauchy problem, the
stability of solutions with respect to the flux is treated
in~\cite[Theorem~2.13]{HoldenRisebro}. In one space dimension,
\cite[Theorem~2.6]{BianchiniColombo} deals with a convex scalar
time independent flux, while autonomous systems are considered
in~\cite[Theorem~2.1]{BianchiniColombo}. Here, we extend these results
to the non autonomous case with boundary, albeit in the scalar one
dimensional case.

A key role in this paper is played by the \emph{wave front tracking}
technique, see~\cite{BressanLectureNotes, DafermosBook}. In this
framework, Glimm type functionals yield a precise control of the total
variation. As a consequence, we obtain the stability of solutions with
respect to the flux in the autonomous case, thanks to a careful use
of~\cite[Theorem~2.9]{BressanLectureNotes}. All these estimates then
lead to the stability in the time dependent case.

The next section presents the results concerning~\eqref{eq:1} on a
half line. Section~\ref{sec:SG} deals with~\eqref{eq:1sg} on a
segment. In both cases, we present first the autonomous case and then
the non autonomous one. Section~\ref{sec:Tech} is devoted to proofs.

\section{The Case of the Half--Line}
\label{sec:HalfLine}

All statements and proofs below are referred to the time interval
$[0,T]$ for a fixed $T>0$. Where the extension to $t \in \reali_+$ is
not straightforward, we provide all necessary details. Denote
$\reali_+ = \left[0, +\infty\right[$ and
$\pint{\reali}_+ = \left]0, +\infty\right[$.  Following~\cite{Martin,
  Vovelle}, for $a,b \in \reali$, we let
\begin{equation}
  \label{eq:5}
  \mathcal{I} (a,b) = \left[ \min\{a,b\}, \, \max\{a,b\}\right] \,.
\end{equation}
Below, if $u_\ell \in \L\infty (I_\ell;\reali)$ for real intervals
$I_\ell$ and for $\ell=1, \ldots, m$, we define
\begin{equation}
  \label{eq:19}
  \mathcal{U} (u_1, \ldots, u_m)
  =
  \bigl[
  \min_{\ell=1, \ldots, m} \essinf_{I_\ell} u_\ell \,, \;
  \max_{\ell=1, \ldots, m} \esssup_{I_\ell} u_\ell
  \bigr] \,.
\end{equation}
Equivalently, $\mathcal{U} (u_1, \ldots, u_m)$ is the closed convex
hull of $\bigcup_{\ell=1}^m u_\ell (I_\ell)$. If $I_u$ is a real
interval, for $u \in \BV (I_u; \reali)$, $\tv (u)$ stands for the
total variation of $u$ on $I_u$, see~\cite[\S~5.10.1]{EvansGariepy}
and, for any interval $I \subseteq I_u$, we also set
$\tv (u;I) = \tv (u_{|I})$.  Moreover, for
$\boldsymbol{u} \in \BV (I; \reali^m)$, we define
$ \tv(\boldsymbol{u}) = \sum_{\ell=1}^m \tv (u_\ell)$.
Denote by ${\mathscr{T}}_t$ the $t$--translation operator:
\begin{equation}
  \label{eq:Trasl}
  \left({\mathscr{T}}_t u\right) (\tau) = u (t+\tau).
\end{equation}
As usual, $u (t,0+)= \displaystyle\lim_{x \to 0+} u(t,x)$ stands for
the trace at $0$ from the right,
see~\cite[Paragraph~5.3]{EvansGariepy} or~\cite[Appendix]{bordo}.
Throughout, we set
\begin{equation}
  \label{eq:sgn}
  \begin{array}{rcl@{\qquad\qquad}rcl}
    \sgnp (u)
    & =
    & \begin{cases}
      1 & \mbox{if } u>0,
      \\
      0 & \mbox{if } u \leq 0,
    \end{cases}
    & \sgnm (u)
        & =
        & \begin{cases}
          0 & \mbox{if } u \geq 0,
          \\
          -1 & \mbox{if } u < 0,
        \end{cases}
    \\
    u^+
    & =
    & \max\{u, 0\} ,
        & u^-
        & =
    & \max\{-u, 0\} .
  \end{array}
\end{equation}
Introduce the \emph{semi--Kru\v{z}kov entropy--entropy flux pairs},
see~\cite{Martin, Vovelle}: for any $k \in \reali$
\begin{equation}
  \label{eq:sken}
  \begin{aligned}
    \eta_k^+ (u) = \ & \left(u-k\right)^+, & & & \Phi_k^+ (t,u) = \ &
    \sgnp (u-k) \; \left(f (t,u) - f (t,k)\right),
    \\
    \eta_k^- (u) = \ & \left(u-k\right)^-, & & & \Phi_k^- (t,u) = \ &
    \sgnm (u-k) \; \left(f (t,u) - f (t,k)\right).
  \end{aligned}
\end{equation}

\begin{definition}
  \label{def:solsk}
  A \emph{weak entropy solution} to the IBVP~\eqref{eq:1} on the
  interval $[0,T]$ is a map
  $u \in \L\infty\left([0,T] \times \reali_+; \reali \right)$ such
  that for any $k \in \reali$ and for any test function
  $\phi \in \Cc1 (\reali \times \reali; \reali_+)$
  \begin{equation}
    \label{eq:ei1}
    \begin{aligned}
      & \int_0^T \int_{\reali_+} \left\{ \eta_k^+ \left(u (t,x)\right)
        \, \partial_t \phi (t,x) + \Phi_k^+\left(t,u (t,x)\right)
        \, \partial_x \phi (t,x) \right\} \d{x} \d{t}
      \\
      & + \int_{\reali_+} \eta_k^+ \left(u_o (x)\right) \, \phi (0,x)
      \d{x} - \int_{\reali_+} \eta_k^+ \left(u (T,x)\right) \, \phi
      (T,x) \d{x}
      \\
      & + \norma{\partial_u f}_{\L\infty ([0,T] \times
        \mathcal{U};\reali)} \int_0^T \eta_k^+ \left(u_b (t)\right) \,
      \phi (t,0) \d{t} \geq 0,
    \end{aligned}
  \end{equation}
  and
  \begin{equation}
    \label{eq:ei2}
    \begin{aligned}
      & \int_0^T \int_{\reali_+} \left\{ \eta_k^- \left(u (t,x)\right)
        \, \partial_t \phi (t,x) + \Phi_k^-\left(t,u (t,x)\right)
        \, \partial_x \phi (t,x) \right\} \d{x} \d{t}
      \\
      & + \int_{\reali_+} \eta_k^- \left(u_o (x)\right) \, \phi (0,x)
      \d{x} - \int_{\reali_+} \eta_k^- \left(u (T,x)\right) \, \phi
      (T,x) \d{x}
      \\
      & + \norma{\partial_u f}_{\L\infty ([0,T] \times
        \mathcal{U};\reali)} \int_0^T \eta_k^- \left(u_b (t)\right) \,
      \phi (t,0) \d{t} \geq 0 ,
    \end{aligned}
  \end{equation}
  where $\mathcal{U} = \mathcal{U} (u_o, {u_b}_{|[0,T]})$ as
  in~\eqref{eq:19}.
\end{definition}

Relying essentially solely on Definition~\ref{def:solsk}, one obtains
the Lipschitz continuous dependence of the solution
to~\eqref{eq:1} on initial and boundary data.

\begin{proposition}
  \label{prop:lipdep}
  Let $f \in \C1([0,T] \times \reali;\reali)$ be such that
  $\{u \mapsto \partial_t f(t.u)\} \in \Wloc{1}{\infty}
  (\reali;\reali)$
  for all $t \in [0,T]$,
  $u_o,w_o \in (\L1 \cap \L\infty)(\reali_+; \reali)$ and
  $u_b, w_b \in (\L1 \cap \L\infty)([0,T]; \reali)$. Assume the
  problems
  \begin{displaymath}
    \!\!\!\!\!\!\!\!
    \left\{
      \begin{array}{lr@{\,}c@{\,}l@{}}
        \partial_t u + \partial_x f (t,u) = 0
        & (t,x)
        & \in
        & [0,T] \times \reali_+
        \\
        u (0,x) = u_o (x)
        & x
        & \in
        & \reali_+
        \\
        u (t, 0+) = u_b (t)
        & t
        & \in
        & [0,T]
      \end{array}
    \right.
    \mbox{and }
    \left\{
      \begin{array}{lr@{\,}c@{\,}l@{}}
        \partial_t w + \partial_x f (t,w) = 0
        & (t,x)
        & \in
        & [0,T] \times \reali_+
        \\
        w (0,x) = w_o (x)
        & x
        & \in
        & \reali_+
        \\
        w (t, 0+) = w_b (t)
        & t
        & \in
        & [0,T]
      \end{array}
    \right.
    \!\!
  \end{displaymath}
  admit solutions $u,w \in \L\infty ([0,T] \times \reali_+; \reali)$
  in the sense of Definition~\ref{def:solsk}, such that $u$ and $w$
  both admit a trace for $x \to 0+$ for a.e.~$t \in [0,T]$. Then, for
  all $t \in [0,T]$,
  \begin{displaymath}
    \norma{u (t) - w (t)}_{\L1 (\reali_+; \reali)}
    \leq
    \norma{u_o - w_o}_{\L1 (\reali_+; \reali)}
    +
    \norma{\partial_u f}_{\L\infty ([0,t] \times \mathcal{U};\reali)} \;
    \norma{u_b - w_b}_{\L1 ([0,t]; \reali)}
  \end{displaymath}
  where
  $\mathcal{U} = \mathcal{U} ({u_b}_{|[0,t]}, \, {w_b}_{|[0,t]})$ is
  as in~\eqref{eq:19}.
\end{proposition}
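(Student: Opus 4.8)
\emph{Overall strategy.} The plan is to run a doubling of variables in the spirit of Kru\v{z}kov, fed directly by the entropy inequalities of Definition~\ref{def:solsk}, and then to handle the boundary $\{x=0\}$ through the traces, whose existence for a.e.~$t$ is part of the assumptions. Set $\gamma u(t)=u(t,0+)$, $\gamma w(t)=w(t,0+)$ and $L=\norma{\partial_u f}_{\L\infty([0,t]\times\mathcal{U})}$ with $\mathcal{U}$ the hull of the two boundary data as in~\eqref{eq:19}. The estimate will follow by combining two facts. \textbf{Fact~A}: an $\L1$ contraction on $\reali_+$ carrying a single boundary term,
\begin{displaymath}
  \norma{u(\tau)-w(\tau)}_{\L1(\reali_+;\reali)}
  \leq
  \norma{u_o-w_o}_{\L1(\reali_+;\reali)}
  + \int_0^\tau \sgn\bigl(\gamma u(\sigma)-\gamma w(\sigma)\bigr)\bigl(f(\sigma,\gamma u(\sigma))-f(\sigma,\gamma w(\sigma))\bigr)\d\sigma \,.
\end{displaymath}
\textbf{Fact~B}: the pointwise bound $\sgn(\gamma u-\gamma w)\bigl(f(\sigma,\gamma u)-f(\sigma,\gamma w)\bigr)\leq L\,\modulo{u_b(\sigma)-w_b(\sigma)}$ for a.e.~$\sigma\in[0,t]$. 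Inserting Fact~B into Fact~A with $\tau=t$ yields the thesis; since the whole argument lives on $[0,t]$, nothing changes for $t\in\reali_+$.

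\emph{Step 1 --- interior doubling (Fact~A).} Adding \eqref{eq:ei1} to \eqref{eq:ei2} for a single solution and a fixed $k$ (and using $\eta_k^++\eta_k^-=\modulo{\cdot-k}$, $\Phi_k^++\Phi_k^-=\sgn(\cdot-k)(f-f(k))$) produces the full Kru\v{z}kov inequality, with the boundary term $L\int_0^T\modulo{u_b-k}\,\phi(t,0)\d t$. Apply it to $u(t,x)$ with $k=w(s,y)$ and to $w(s,y)$ with $k=u(t,x)$, add, and integrate in $(t,x,s,y)$ against
\begin{displaymath}
  \phi(t,x,s,y)=\Psi\!\left(\tfrac{t+s}{2},\tfrac{x+y}{2}\right)\,\rho_\epsilon\!\left(\tfrac{t-s}{2}\right)\,\rho_\epsilon\!\left(\tfrac{x-y}{2}\right),
\end{displaymath}
with $\rho$ a standard even mollifier and $\Psi\in\Cc1([0,T]\times\reali_+;\reali_+)$ vanishing on $\{\xi=0\}$, so that the $L$--weighted terms do not survive the limit $\epsilon\to0$. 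The usual computation collapses the left hand side onto the diagonal; the only feature proper to the non autonomous case is the discrepancy $f(t,\cdot)-f(s,\cdot)$, which is of size $\modulo{t-s}$: its leading part is odd in $t-s$ and is killed by the even $\rho_\epsilon(\tfrac{t-s}{2})$, while the remainder is absorbed using the uniform continuity of $\partial_t f$ on bounded sets together with the local Lipschitz bound $\{u\mapsto\partial_t f(t,u)\}\in\Wloc1\infty$, so that this correction is $o(1)$. Finally take $\Psi(\tau,\xi)=\beta(\tau)\,\alpha(\xi)$, with $\alpha$ rising from $0$ to $1$ across a thin layer at $\xi=0$ and decaying for large $\xi$; shrinking the inner layer brings out precisely the boundary term of Fact~A (the outer cut--off is harmless because $u-w\in\L1$), and $\beta\to\caratt{[0,\tau]}$ concludes Step~1.

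\emph{Step 2 --- the boundary condition (Fact~B), and the main obstacle.} Testing \eqref{eq:ei1} and \eqref{eq:ei2} for $u$ alone against $\phi(t,x)=\beta(t)\,\psi_\delta(x)$ with $\psi_\delta(0)=1$ and $\spt\psi_\delta\subseteq[0,\delta]$, subtracting the interior inequalities and letting $\delta\to0$, the averages $\tfrac1\delta\int_0^\delta\Phi_k^\pm(t,u(t,x))\d x$ converge to $\Phi_k^\pm(t,\gamma u(t))$, and one obtains, for a.e.~$t$ and all $k$, $\Phi_k^+(t,\gamma u(t))\leq L\,\eta_k^+(u_b(t))$ and $\Phi_k^-(t,\gamma u(t))\leq L\,\eta_k^-(u_b(t))$, i.e.~the trace $\gamma u(t)$ satisfies the boundary entropy condition relative to $u_b(t)$ (the analogue holds for $\gamma w$, $w_b$). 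Fact~B is then extracted by a case analysis on these conditions: assuming e.g.~$\gamma u>\gamma w$, testing with $k=\gamma w$ (for $u$) and $k=\gamma u$ (for $w$) yields $f(\gamma u)-f(\gamma w)\leq L\min\{(u_b-\gamma w)^+,(\gamma u-w_b)^+\}$, which already settles the matter unless $\gamma u>u_b$ and $\gamma w<w_b$; in that remaining case the boundary conditions force $f(\gamma u)\leq f(u_b)$ and $f(\gamma w)\geq f(w_b)$, hence $f(\gamma u)-f(\gamma w)\leq f(u_b)-f(w_b)\leq L\,\modulo{u_b-w_b}$, and the intermediate configurations are dealt with by the same combination of the two boundary conditions, using in addition that when both traces lie between $u_b$ and $w_b$ the flux difference is directly bounded by $L\,\modulo{u_b-w_b}$. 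I expect this boundary analysis to be the crux: the traces $\gamma u$, $\gamma w$ in general do \emph{not} equal the prescribed data, so the naive passage $y\to0$ in the boundary term of the doubled inequality only produces $\modulo{u_b-\gamma w}+\modulo{\gamma u-w_b}$, which is not controlled by $\modulo{u_b-w_b}$; it is exactly the $\norma{\partial_u f}$--weighted boundary term of Definition~\ref{def:solsk} that restores the correct bound, and some care is needed to keep the $\L\infty$--norm over the hull of the boundary data only.
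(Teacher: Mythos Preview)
Your proposal is correct and follows the same overall strategy as the paper: Kru\v{z}kov doubling in the interior plus a separate treatment of the boundary flux via the trace. The paper's own proof is essentially a pointer to \cite[Theorem~4.3]{bordo} and \cite[Lemma~17]{Martin}, flagging Lemma~\ref{lem:bc} and Lemma~\ref{lem:equiv} as the key boundary tools; your Step~1 is that doubling, and your Step~2 is the boundary analysis.

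The one noteworthy difference is how you reach Fact~B. The paper passes through \emph{boundary} entropy--entropy flux pairs (Definition~\ref{def:beef}, Lemma~\ref{lem:bc}) to obtain the BLN condition of Lemma~\ref{lem:equiv}:
\[
  \sgn\!\bigl(\gamma u(t)-u_b(t)\bigr)\,\bigl(f(t,\gamma u(t))-f(t,k)\bigr)\le 0
  \quad\text{for all }k\in\mathcal{I}\bigl(\gamma u(t),u_b(t)\bigr),
\]
and similarly for $w$. From this, the pointwise bound $\sgn(\gamma u-\gamma w)\bigl(f(\gamma u)-f(\gamma w)\bigr)\le L\,\modulo{u_b-w_b}$ with $L$ computed only over $\mathcal{U}(u_b,w_b)$ follows by the standard BLN case analysis, with no Lipschitz constant of $f$ ever entering the boundary condition itself. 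You instead extract the weaker-looking inequalities $\Phi_k^\pm(t,\gamma u)\le L_u\,\eta_k^\pm(u_b)$ directly from Definition~\ref{def:solsk}; the constant there is $L_u=\norma{\partial_u f}_{\L\infty([0,T]\times\mathcal{U}(u_o,u_b))}$, which is \emph{not} the $L$ in the statement. Your case analysis nonetheless recovers the sharp $L$ because you only invoke these inequalities with $k=u_b$ or $k=w_b$ (where the right hand side vanishes), and the remaining Lipschitz step is always over a subinterval of $\mathcal{I}(u_b,w_b)$. So your route works, but the BLN formulation of Lemma~\ref{lem:equiv} makes the ``hull of the boundary data only'' feature transparent from the start and shortens the casework --- this is what the paper buys by going through Lemma~\ref{lem:bc} and Lemma~\ref{lem:equiv}.
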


Remark that Proposition~\ref{prop:lipdep}, whose proof is deferred to
Section~\ref{sec:Tech}, also ensures the uniqueness of the solution
to~\eqref{eq:1} in the sense of Definition~\ref{def:solsk}, as soon as
a solution exists.

\subsection{The Autonomous Case on the Half-Line}
\label{sec:autHL}

We study first the following autonomous IBVP, which is a particular case of~\eqref{eq:1}:
\begin{equation}
  \label{eq:1aut}
  \left\{
    \begin{array}{l@{\qquad}r@{\,}c@{\,}l}
      \partial_t u + \partial_x f (u) = 0
      & (t,x)
      & \in
      & [0,T] \times \reali_+
      \\
      u (0,x) = u_o (x)
      & x
      & \in
      & \reali_+
      \\
      u (t, 0) = u_b (t)
      & t
      & \in
      & [0,T] \, .
    \end{array}
  \right.
\end{equation}
Solutions to~\eqref{eq:1aut} are understood in the sense of
Definition~\ref{def:solsk}. Observe that Proposition~\ref{prop:lipdep}
applies to~\eqref{eq:1aut}, under the hypothesis
$f \in \C1 (\reali;\reali)$. The next Proposition ensures the
existence of solutions to~\eqref{eq:1aut}, as well as some of their
properties.

\begin{proposition}
  \label{prop:2}
  Let $f \in \Wloc{1}{\infty}(\reali; \reali)$,
  $u_o \in (\L1 \cap \BV) (\reali_+; \reali)$ and
  $u_b \in (\L1 \cap \BV) ([0,T]; \reali)$. Then,
  problem~\eqref{eq:1aut} admits a solution $u$ in the sense of
  Definition~\ref{def:solsk}, with the properties:
  \begin{enumerate}
  \item If $u_o$ and $u_b$ are piecewise constant, then for $t$ small,
    the map $t \to u (t)$ coincides with the gluing of Lax solutions
    to Riemann problems at the points of jumps of $u_o$ and at $x=0$.
  \item Range of $u$: with the notation in~\eqref{eq:19},
    $u (t,x) \in \mathcal{U} (u_o, {u_b}_{|[0,t]})$ for
    a.e.~$(t,x) \in [0,T] \times \reali_+$. Hence, for
    a.e.~$t \in [0,T]$,
    \begin{displaymath}
      \norma{u (t)}_{\L\infty
        (\reali_+; \reali)} \leq \max \left\{ \norma{u_o}_{\L\infty
          (\reali_+; \reali)}, \, \norma{u_b}_{\L\infty ([0,t];\reali)}
      \right\}.
    \end{displaymath}
  \item $u$ is Lipschitz continuous in time: for all
    $t_1,t_2 \in [0,T]$,
    \begin{displaymath}
      \norma{u (t_1) - u (t_2)}_{\L1 (\reali_+; \reali)}
      \leq
      C_1 \; \modulo{t_2 - t_1},
    \end{displaymath}
    where
    $C_1 = \norma{f'}_{\L\infty (\mathcal{U};\reali)} \left( \tv (u_o)
      + \tv (u_b; [0,t_1\vee t_2]) +\modulo{u_b (0+) - u_o (0+)}
    \right)$
    and $\mathcal{U} = \mathcal{U} (u_o, {u_b}_{|[0,t_1 \vee t_2]})$,
    with the notation~\eqref{eq:19}.
  \item Total variation estimate: for all $t \in [0,T]$
    \begin{displaymath}
      \tv\left(u (t) \right)
      \leq
      \tv (u_o)
      +
      \tv \left(u_b; [0,t]\right)
      +
      \modulo{u_b(0+) - u_o (0+)} \,.
    \end{displaymath}
  \end{enumerate}
\end{proposition}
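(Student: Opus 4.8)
The plan is to construct the solution by \emph{wave front tracking} and then pass to the limit, exploiting that the notion of solution in Definition~\ref{def:solsk} is stable under $\L1$--convergence. First I would fix approximations: piecewise constant $u_o^n \to u_o$ and $u_b^n \to u_b$ in $\L1$, with $\tv (u_o^n) \le \tv (u_o)$, $\tv (u_b^n; [0,T]) \le \tv (u_b;[0,T])$, $u_o^n (0+) \to u_o (0+)$ and $u_b^n (0+) \to u_b (0+)$; and a piecewise linear $f^n$ interpolating $f$ on a grid that refines with $n$, so that $f^n \to f$ uniformly on compact sets and $(f^n)'$ is controlled by $f'$ on the relevant range. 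For each $n$ I would then build the front tracking approximation $u^n$ of~\eqref{eq:1aut} with flux $f^n$ and data $u_o^n, u_b^n$: at $t=0$ apply the Lax Riemann solver at every jump of $u_o^n$ and the boundary Riemann solver at $x=0$ with states $u_b^n (0+)$ and $u_o^n (0+)$; prolong by solving a new Lax problem at each interior interaction, letting fronts reaching $x=0$ leave the domain, and solving a new boundary Riemann problem at each of the finitely many jump times of $u_b^n$ and whenever the state adjacent to the boundary changes. Two structural points must be pinned down here: the boundary solver has to be the one compatible with~\eqref{eq:ei1}--\eqref{eq:ei2}, namely the outgoing fan is the non--negative speed portion of the Lax solution between the boundary datum (as left state) and the adjacent right state, so that the resulting trace is admissible in the sense of~\cite{Martin, Vovelle}; and the number of fronts and interactions on $[0,T]$ must be finite — this follows, as in the classical scalar theory~\cite{BressanLectureNotes, DafermosBook}, from the finiteness of the set of wave speeds for a piecewise linear flux plus a Glimm--type interaction bound, the boundary only deleting fronts or creating them at the finitely many jumps of $u_b^n$ and at the finitely many exit events. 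Since $f^n$ is piecewise linear, $u^n$ is then an \emph{exact} solution of the approximate IBVP in the sense of Definition~\ref{def:solsk}.

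The second step is the set of uniform a priori bounds. For the range, a maximum--principle induction over the interaction times: a Lax fan takes values in the interval spanned by its two states and a boundary fan in the interval spanned by the boundary datum and the adjacent state, hence $u^n (t,x) \in \mathcal{U} (u_o^n, {u_b^n}_{|[0,t]})$ for all $t$, which gives item~(2), together with the $\L\infty$ bound, in the limit. For the total variation I would use the Glimm--type functional
\begin{displaymath}
  \Upsilon (t) = \tv \bigl(u^n (t); \reali_+\bigr) + \tv \bigl(u_b^n; [t,T]\bigr) + \modulo{u^n (t,0+) - u_b^n (t)}
\end{displaymath}
and show it is non--increasing in $t$: across an interior interaction $\tv (u^n (t))$ does not increase (scalar waves merge) and the other two terms are unchanged; when a front exits at $x=0$ the drop of $\tv (u^n (t))$ dominates, by the triangle inequality, the variation of the defect term; and at a jump of $u_b^n$ from $b^-$ to $b^+$, with old and new adjacent states $u_R$ and $u_R'$, the monotonicity of the Lax solution of $(b^+,u_R)$ places $u_R'$ between $b^+$ and $u_R$, so the increase $\modulo{u_R' - u_R}$ of $\tv (u^n)$ plus the new defect $\modulo{u_R' - b^+}$ equals $\modulo{u_R - b^+} \le \modulo{b^+ - b^-} + \modulo{u_R - b^-}$, which is absorbed by the drop $\modulo{b^+ - b^-}$ of $\tv (u_b^n; [t,T])$ and the old defect $\modulo{u_R - b^-}$. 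Using that immediately after $t=0$ the boundary fan has size $\modulo{W(0) - u_o^n (0+)}$ with $W(0)$ between $u_b^n (0+)$ and $u_o^n (0+)$, so that $\modulo{W(0) - u_o^n (0+)} + \modulo{W(0) - u_b^n (0+)} = \modulo{u_b^n (0+) - u_o^n (0+)}$, one gets $\Upsilon (0+) = \tv (u_o^n) + \tv (u_b^n; [0,T]) + \modulo{u_b^n (0+) - u_o^n (0+)}$; dropping the defect term in $\Upsilon (t) \le \Upsilon (0+)$ and using $\tv (u_b^n; [0,T]) - \tv (u_b^n; [t,T]) = \tv (u_b^n; [0,t])$ yields
\begin{displaymath}
  \tv \bigl(u^n (t); \reali_+\bigr) \le \tv (u_o^n) + \tv \bigl(u_b^n; [0,t]\bigr) + \modulo{u_b^n (0+) - u_o^n (0+)} ,
\end{displaymath}
which is item~(4) after passing to the limit.

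The time regularity~(3) is then standard: in a front tracking solution $\norma{u^n (t_1) - u^n (t_2)}_{\L1 (\reali_+)}$ is bounded by the total strength of the fronts present during $[t_1 \wedge t_2, t_1 \vee t_2]$ times the maximal speed $\norma{(f^n)'}_{\L\infty (\mathcal{U})}$ times $\modulo{t_2 - t_1}$, and the total strength is controlled by the total variation bound just obtained; this reproduces the constant $C_1$ in the limit. Finally, by the uniform $\L\infty$, $\tv$ and time--Lipschitz bounds, Helly's theorem gives a subsequence with $u^n \to u$ in $\Lloc1 ([0,T] \times \reali_+)$; since each $u^n$ exactly solves the IBVP with flux $f^n$ and data $u_o^n, u_b^n$, and this notion of solution is stable under $\L1$--convergence~\cite[Chapter~2, Remark~7.33]{MalekEtAlBook} (together with $f^n \to f$ uniformly on $\mathcal{U}$, $u_o^n \to u_o$, $u_b^n \to u_b$), the limit $u$ solves~\eqref{eq:1aut} in the sense of Definition~\ref{def:solsk}, and items~(2)--(4) pass to the limit. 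For~(1), when $u_o, u_b$ are piecewise constant one may take $u_o^n = u_o$ and $u_b^n = u_b$; for $t$ below the first interaction time the front tracking fans converge to the exact Lax solutions of the interior and boundary Riemann problems, and uniqueness (Proposition~\ref{prop:lipdep}) forces $u(t)$ to be their gluing. The passage from $[0,T]$ to $t \in \reali_+$ is immediate since all constants depend on $T$ only through the locally finite quantity $\tv (u_b; [0,T])$.

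I expect the main obstacle to be the boundary bookkeeping: designing the boundary Riemann solver so that it is simultaneously compatible with the entropy inequalities~\eqref{eq:ei1}--\eqref{eq:ei2}, compatible with the finiteness of the front count, and fine enough that the functional $\Upsilon$ above is genuinely monotone — in particular the role of the correction term $\modulo{u^n (t,0+) - u_b^n (t)}$ and the claim that the new adjacent state lies between $b^+$ and $u_R$ must be verified according to the sign of $(f^n)'$ near the relevant states, and one has to make sure that the various boundary events (a front leaving, a datum jump) never accumulate before $T$.
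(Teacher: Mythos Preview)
Your proposal is correct and follows essentially the same approach as the paper: the same wave front tracking construction with piecewise linear flux, the same Glimm functional (your $\Upsilon$ is exactly the paper's $V^\epsilon$ in~\eqref{eq:Vepsilon}), the same range, $\tv$ and time--Lipschitz bounds, and the same Helly compactness. The one minor difference is that the paper devotes an explicit step to verifying, case by case via Rankine--Hugoniot and Oleinik conditions, that each front tracking approximant satisfies~\eqref{eq:ei1}--\eqref{eq:ei2} for the piecewise linear flux --- this is precisely where the boundary compatibility you flag as the main obstacle is settled, through the condition~\eqref{eq:18} --- and then passes to the limit term by term in the entropy inequalities, whereas you assert this compatibility and appeal to the abstract $\L1$--stability of Definition~\ref{def:solsk}.
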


\noindent The proof is deferred to \S~\ref{subs:AHL}. Various results
similar to, but not containing, Proposition~\ref{prop:2} can be found
in the current literature. The case of a convex flux is treated
in~\cite{AnconaMarson3}. A bounded domain is considered
in~\cite{BardosLerouxNedelec} and in~\cite{AmmarWittboldCarrillo,
  bordo}, see also~\cite[Section~6.9]{DafermosBook}
or~\cite[Section~15.1]{Serre2}.

Our main result, namely the stability of the solution
to~\eqref{eq:1aut} with respect to the flux, concludes this
Section. The current literature consider the case when no boundary is
present. In the one dimensional setting, the scalar equation is
treated in~\cite[Theorem~2.13]{HoldenRisebro}
and~\cite[Theorem~2.6]{BianchiniColombo} for a convex scalar flux,
while systems are considered
in~\cite[Theorem~2.1]{BianchiniColombo}. The multi-dimensional case is
covered in~\cite{ColomboMercierRosini}.

\begin{theorem}
  \label{thm:3}
  Let $f,g \in \C1 (\reali; \reali)$,
  $u_o \in (\L1 \cap \BV) (\reali_+; \reali)$ and
  $u_b \in (\L1 \cap \BV) ([0,T]; \reali)$. Call $u$ and $v$ the
  solutions to the problems
  \begin{equation}
    \label{eq:14}
    \!\!\!\!\!
    \left\{
      \begin{array}{@{\,}lr@{\,}c@{\,}l}
        \partial_t u + \partial_x f (u) = 0
        &(t,x)
        & \in
        & [0,T] \times \reali_+
        \\
        u (0,x) = u_o (x)
        & x
        & \in
        & \reali_+
        \\
        u (t, 0) = u_b (t)
        & t
        & \in
        & [0,T]
      \end{array}
    \right.
    \mbox{ and }
    \left\{
      \begin{array}{@{\,}lr@{\,}c@{\,}l}
        \partial_t v + \partial_x g (v) = 0
        & (t,x)
        & \in
        & [0,T] \times \reali_+
        \\
        v (0,x) = u_o (x)
        & x
        & \in
        & \reali_+
        \\
        v (t, 0) = u_b (t)
        & t
        & \in
        & [0,T]
      \end{array}
    \right.
    \!\!\!\!\!
  \end{equation}
  constructed in Proposition~\ref{prop:2}. Then, with
  $\mathcal{U} = \mathcal{U} (u_o, {u_b}_{|[0,t]})$ as
  in~\eqref{eq:19}, for all $t \in [0,T]$,
  \begin{displaymath}
    \begin{aligned}
      \norma{u (t) - v (t)}_{\L1 (\reali_+; \reali)} \leq \
      & \max \!
      \left\{1, \norma{g'}_{\L\infty (\mathcal{U}; \reali)}\right\} \,
      \norma{D(f - g)}_{\L\infty (\mathcal{U};\reali)}
      \\
      & \times \left(\tv (u_o) + \tv\left(u_b; [0,t]\right) +
        \modulo{u_b (0+) - u_o (0+)} \right) \, t.
    \end{aligned}
  \end{displaymath}
\end{theorem}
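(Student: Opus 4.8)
The plan is to exploit the semigroup structure of the solutions built in Proposition~\ref{prop:2} and the integral error estimate of~\cite[Theorem~2.9]{BressanLectureNotes}. First, for a fixed boundary datum $u_b$, Proposition~\ref{prop:2} produces a map $(t,z)\mapsto S^f_t z$ on $(\L1\cap\BV)(\reali_+;\reali)$, where $S^f_t z$ is the solution at time $t$ issued from the initial datum $z$ and from the boundary datum $\mathscr{T}_\cdot u_b$; by the uniqueness granted by Proposition~\ref{prop:lipdep} this is a semigroup and $u(t)=S^f_t u_o$. Taking equal boundary data in Proposition~\ref{prop:lipdep} shows that $S^f$ is non-expansive on $\L1(\reali_+;\reali)$, and Proposition~\ref{prop:2}(3) shows that $\tau\mapsto u(\tau)$ is Lipschitz from $[0,T]$ into $\L1(\reali_+;\reali)$. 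The same holds for $g$, with semigroup $S^g$ and $v(t)=S^g_t u_o$.

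Applying~\cite[Theorem~2.9]{BressanLectureNotes} to the semigroup $S^g$ and the Lipschitz curve $\tau\mapsto u(\tau)$, and recalling that $u(\tau+h)=S^f_h u(\tau)$, one obtains
\begin{displaymath}
  \norma{u(t)-v(t)}_{\L1(\reali_+;\reali)}
  =
  \norma{u(t)-S^g_t u_o}_{\L1(\reali_+;\reali)}
  \leq
  \int_0^t \liminf_{h\to0+}\frac1h\,
  \norma{S^f_h u(\tau)-S^g_h u(\tau)}_{\L1(\reali_+;\reali)}\,\d{\tau} ,
\end{displaymath}
so the whole matter is reduced to the instantaneous discrepancy produced by replacing $f$ by $g$ in the evolution of $u(\tau)$. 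Heuristically $S^f_h w - w = -h\,\partial_x f(w)+o(h)$ and $S^g_h w - w = -h\,\partial_x g(w)+o(h)$, whence $S^f_h w - S^g_h w = -h\,\partial_x\bigl((f-g)(w)\bigr)+o(h)$, so the rate above should not exceed $\tv\bigl((f-g)\circ u(\tau)\bigr)\leq\norma{D(f-g)}_{\L\infty(\mathcal U;\reali)}\,\tv\bigl(u(\tau)\bigr)$, using $u(\tau,x)\in\mathcal U$ from Proposition~\ref{prop:2}(2). The boundary contributes an additional term, and making the argument rigorous is done on the piecewise constant front-tracking approximations underlying Proposition~\ref{prop:2}, passing then to the limit.

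Concretely, for a piecewise constant state $w$ one uses Proposition~\ref{prop:2}(1): for $h$ small both $S^f_h w$ and $S^g_h w$ are the gluing of the Lax solutions of the Riemann problems located at the jumps of $w$ and of the boundary Riemann problem at $x=0$, and these have pairwise disjoint supports, so it suffices to estimate each piece. At an interior jump $(w^-,w^+)$ the two self-similar profiles are monotone with the same range, hence their local $\L1$ distance equals $h\int_{\mathcal I(w^-,w^+)}\modulo{\xi_f(s)-\xi_g(s)}\,\d{s}$, where $\xi_f,\xi_g$ are the corresponding wave speeds; for a single shock $\modulo{\xi_f-\xi_g}=\bigl|\tfrac{(f-g)(w^+)-(f-g)(w^-)}{w^+-w^-}\bigr|\le\norma{D(f-g)}_{\L\infty(\mathcal U;\reali)}$, and the comparison of the convex (or concave) envelopes of $f$ and $g$ on $\mathcal I(w^-,w^+)\subseteq\mathcal U$ yields the same bound in general, so the contribution of the jump is at most $h\,\norma{D(f-g)}_{\L\infty(\mathcal U;\reali)}\,\modulo{w^+-w^-}$. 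At $x=0$ the analogous comparison of the boundary Riemann solvers of~\eqref{eq:1aut}, whose waves move into $x>0$ at speed at most $\norma{g'}_{\L\infty(\mathcal U;\reali)}$, produces the local bound $\O\,h\,\max\{1,\norma{g'}_{\L\infty(\mathcal U;\reali)}\}\,\norma{D(f-g)}_{\L\infty(\mathcal U;\reali)}\,\modulo{w(0+)-u_b(\tau)}$. Summing over all jumps, controlling the total jump and the boundary jump by the Glimm-type functional of Proposition~\ref{prop:2}, and letting the front-tracking parameter go to zero gives
\begin{displaymath}
  \liminf_{h\to0+}\frac1h\,\norma{S^f_h u(\tau)-S^g_h u(\tau)}_{\L1(\reali_+;\reali)}
  \leq
  \max\{1,\norma{g'}_{\L\infty(\mathcal U;\reali)}\}\;\norma{D(f-g)}_{\L\infty(\mathcal U;\reali)}\;\tv\bigl(u(\tau)\bigr) .
\end{displaymath}

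Finally, Proposition~\ref{prop:2}(4) bounds $\tv(u(\tau))\le\tv(u_o)+\tv(u_b;[0,\tau])+\modulo{u_b(0+)-u_o(0+)}\le\tv(u_o)+\tv(u_b;[0,t])+\modulo{u_b(0+)-u_o(0+)}$, a quantity independent of $\tau$; inserting this in the integral of the second paragraph and integrating over $[0,t]$ gives exactly the claimed inequality. The \textbf{main obstacle} is the local error estimate of the third paragraph: at interior jumps one must show that perturbing the flux from $f$ to $g$ moves the (possibly non-convex) scalar Lax solver by at most $\norma{D(f-g)}_{\L\infty(\mathcal U;\reali)}$ times the jump per unit time, which amounts to controlling the dependence of the convex/concave envelopes on the flux; and one must carry out the analogous, genuinely different, analysis for the boundary Riemann solver, where the truncation of the boundary datum in the Bardos--LeRoux--Nedelec sense is precisely what generates the factor $\max\{1,\norma{g'}_{\L\infty(\mathcal U;\reali)}\}$. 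The remaining steps --- the semigroup and Lipschitz-in-time properties, the passage to the limit on the approximations, and the total variation bookkeeping --- are routine given Propositions~\ref{prop:lipdep} and~\ref{prop:2}.
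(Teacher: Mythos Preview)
Your overall strategy --- applying~\cite[Theorem~2.9]{BressanLectureNotes} to the $g$-evolution along the $f$-orbit, bounding the instantaneous discrepancy by a local Riemann comparison, and controlling the result via the Glimm functional --- is indeed the paper's. But two points in your execution are not right, and they are linked.

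First, the map $(t,z)\mapsto S^f_t z$ you describe, acting on initial data alone with the time-dependent boundary datum $u_b$ fixed in the background, is \emph{not} a semigroup: $S^f_{s+t}z$ feeds the boundary with $u_b$ on $[t,t+s]$ during the second leg, whereas $S^f_s\bigl(S^f_t z\bigr)$ would feed it with $u_b$ on $[0,s]$; these disagree as soon as $u_b$ is nonconstant, so Theorem~2.9 does not apply as stated. The paper fixes this by enlarging the state space to pairs $\mathbf p=(u_o^\epsilon,u_b^\epsilon)$ and setting $S^{g^\epsilon}_t\mathbf p=\bigl(v^\epsilon(t),\mathscr T_t u_b^\epsilon\bigr)$, which \emph{is} a genuine semigroup. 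On this pair space Proposition~\ref{prop:lipdep} (now with possibly different boundary data) gives the Lipschitz bound~\eqref{eq:quellacheviene}, and it is \emph{this} Lipschitz constant, appearing as a prefactor in~\cite[Theorem~2.9]{BressanLectureNotes}, that produces the factor $\max\{1,\norma{g'}_{\L\infty(\mathcal U)}\}$ in the statement --- not the boundary Riemann analysis. In fact the paper's local computation around~\eqref{eq:no} treats the boundary jump exactly like an interior one (restricting the full Riemann solution to $I_\delta=[0,\delta]$ only shrinks the $\L1$ norm) and yields the uniform bound $h\,\norma{D(f-g)}_{\L\infty(\mathcal U)}\,\modulo{w^r-w^\ell}$ with no $\norma{g'}$ factor.

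Second, once the local estimate is summed over all jumps \emph{including} the one at $x=0$, the result is $\norma{D(f-g)}_{\L\infty(\mathcal U)}\bigl(\tv(u^\epsilon(\tau))+\modulo{u_b^\epsilon(\tau+)-u^\epsilon(\tau,0+)}\bigr)$. The second summand is not controlled by $\tv\bigl(u^\epsilon(\tau)\bigr)$ alone, so Proposition~\ref{prop:2}(4) is insufficient here; one needs the full Glimm functional $V^\epsilon$ of~\eqref{eq:Vepsilon} and its monotonicity (Step~\textbf{A.3}) to get
\[
\tv\bigl(u^\epsilon(\tau)\bigr)+\modulo{u_b^\epsilon(\tau+)-u^\epsilon(\tau,0+)}
\;\le\; V^\epsilon(\tau)-\tv\bigl(u_b^\epsilon;[\tau,T]\bigr)
\;\le\; V^\epsilon(0)-\tv\bigl(u_b^\epsilon;[\tau,T]\bigr),
\]
which together with~\eqref{eq:app} gives the right-hand side of the theorem. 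Note also that the paper carries out the whole argument at the $\epsilon$-level (piecewise-linear fluxes, piecewise-constant data), where each Riemann problem for $f^\epsilon$ is solved by a single wave and the instantaneous comparison is a finite algebraic computation; only at the very end does it let $\epsilon\to0$ via~\eqref{eq:a1}--\eqref{eq:a3}.
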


\noindent The proof is deferred to Section~\ref{subs:AHL}.

\subsection{The Non Autonomous Case on the Half-Line}
\label{sec:nonautHL}

The results obtained in Section~\ref{sec:autHL} are here extended to
problem~\eqref{eq:1}. We first generalize Proposition~\ref{prop:2}.

\begin{proposition}
  \label{prop:2t}
  Let $f$ be such that
  \begin{equation}
    \label{eq:f}
    f \in \C1 ([0,T] \times \reali; \reali)
    \quad \mbox{and} \quad
    \{t \mapsto \partial_u f(t,u)\} \in \Wloc{1}{\infty} ([0,T];\reali)
    \mbox{ for all } u \in \reali.
  \end{equation}
  Fix $u_o \in (\L1 \cap \BV) (\reali_+; \reali)$ and
  $u_b \in (\L1 \cap \BV) ([0,T]; \reali)$. Then,
  problem~\eqref{eq:1} admits a solution $u$ in the sense of
  Definition~\ref{def:solsk}, with the properties:
  \begin{enumerate}
  \item Range of $u$: with the notation in~\eqref{eq:19},
    $u (t,x) \in \mathcal{U} (u_o, {u_b}_{|[0,t]})$ for
    a.e.~$(t,x) \in [0,T] \times \reali_+$. Hence, for a.e.~$t\in [0,T]$,
    \begin{displaymath}
    \norma{u (t)}_{\L\infty (\reali_+; \reali)}
    \leq
    \max \left\{
      \norma{u_o}_{\L\infty (\reali_+; \reali)}, \,
      \norma{u_b}_{\L\infty ([0,t];\reali)}
    \right\}.
  \end{displaymath}
  \item $u$ is Lipschitz continuous in time: for all
    $t_1,t_2 \in [0,T]$,
    \begin{displaymath}
      \norma{u (t_1) - u (t_2)}_{\L1 (\reali_+; \reali)}
      \leq
      C \; \modulo{t_2 - t_1},
    \end{displaymath}
    where
    $C = \norma{\partial_u f}_{\L\infty ([0,t_1 \vee t_2] \times
      \mathcal{U};\reali)} \left( \tv (u_o) + \tv (u_b; [0,t_1\vee
      t_2]) +\modulo{u_b (0+) - u_o (0+)} \right)$
    and $\mathcal{U} = \mathcal{U} (u_o, {u_b}_{|[0,t_1 \vee t_2]})$,
    with the notation~\eqref{eq:19}.
  \item Total variation estimate: for all $t \in [0,T]$
    \begin{displaymath}
      \tv\left(u (t) \right)
      \leq
      \tv (u_o)
      +
      \tv \left(u_b; [0,t]\right)
      +
      \modulo{u_b(0+) - u_o (0+)} \,.
    \end{displaymath}
  \end{enumerate}
\end{proposition}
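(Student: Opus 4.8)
The plan is to reduce the non autonomous case to the autonomous one via a careful approximation and limiting argument. First I would freeze the time variable in the flux: partitioning $[0,T]$ into $N$ subintervals $[t_{n-1},t_n]$ with $t_n = nT/N$, I would replace $f(t,u)$ on $[t_{n-1},t_n]$ by the autonomous flux $f_n(u) := f(t_{n-1},u)$. On each such slice, an autonomous IBVP of the form \eqref{eq:1aut} is solved by Proposition~\ref{prop:2}, using as initial datum the trace at $t=t_{n-1}$ of the solution constructed on the previous slice (with datum $u_o$ on the first slice), and the same boundary datum $u_b$ restricted to $[t_{n-1},t_n]$. Gluing these pieces yields an approximate solution $u^N$ defined on all of $[0,T]\times\reali_+$. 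Since the flux is only changed in $t$, not in $u$, the piecewise-constant structure of datum and boundary in item~1 of Proposition~\ref{prop:2} is preserved when we additionally approximate $u_o$, $u_b$ by piecewise constant functions (converging in $\L1$ and in total variation); for the convergence argument it is convenient to work with such piecewise-constant data and pass to the limit at the end.

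The core estimates come from Proposition~\ref{prop:2}, applied slice by slice. The total variation bound (item~4) on each slice gives $\tv(u^N(t_n)) \le \tv(u^N(t_{n-1})) + \tv(u_b;[t_{n-1},t_n]) + \modulo{u_b(t_{n-1}+) - u^N(t_n^-,0+)}$; the last boundary-mismatch term is the delicate point, but it is controlled because the trace of $u^N$ at $x=0^+$ stays within the range $\mathcal U$ (item~2) and the boundary datum varies by at most $\tv(u_b;[t_{n-1},t_n])$, so telescoping over $n$ produces exactly the bound $\tv(u_o)+\tv(u_b;[0,t])+\modulo{u_b(0+)-u_o(0+)}$ claimed in item~3 here, up to the vanishing discretization error from the piecewise-constant approximation. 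The range estimate (item~1 here) and the $\L1$ Lipschitz-in-time estimate (item~2 here) follow in the same telescoping fashion from items~2 and~3 of Proposition~\ref{prop:2}, noting that $\norma{f'_n}_{\L\infty(\mathcal U)} = \norma{\partial_u f(t_{n-1},\cdot)}_{\L\infty(\mathcal U)} \le \norma{\partial_u f}_{\L\infty([0,T]\times\mathcal U)}$ uniformly in $N$. These uniform bounds give compactness of $\{u^N\}$ in $\Lloc1([0,T]\times\reali_+;\reali)$ by Helly's theorem applied in $x$ together with the time-equicontinuity, so a subsequence converges to some $u$.

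It then remains to show that the limit $u$ is a weak entropy solution to \eqref{eq:1} in the sense of Definition~\ref{def:solsk}. Each $u^N$ satisfies the entropy inequalities \eqref{eq:ei1}--\eqref{eq:ei2} with $f$ replaced by the piecewise-frozen flux $f^N(t,u) = f(t_{n-1},u)$ for $t\in[t_{n-1},t_n[$: this is where hypothesis~\eqref{eq:f}, in particular the $\Wloc1\infty$ regularity of $t\mapsto\partial_u f$, enters, since the error $\Phi_k^\pm$ incurred by replacing $f(t,\cdot)$ by $f(t_{n-1},\cdot)$ is pointwise $\mathcal O(1/N)$ uniformly on $\mathcal U$, and likewise for the coefficient $\norma{\partial_u f}$ multiplying the boundary term; these errors vanish as $N\to\infty$. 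Because $f^N \to f$ uniformly on $[0,T]\times\mathcal U$ and $u^N\to u$ in $\L1$, each integral term in \eqref{eq:ei1}--\eqref{eq:ei2} passes to the limit, using the $\L1$-stability of the entropy formulation recalled in the Introduction (\cite[Chapter~2, Remark~7.33]{MalekEtAlBook}), and we recover the inequalities for $u$ with the genuine flux $f$. Finally, removing the piecewise-constant approximation of $u_o,u_b$ is a further $\L1$-limit, stable by the same remark, and the four listed properties survive the limit by lower semicontinuity of total variation and by Fatou. The main obstacle I anticipate is keeping the boundary-mismatch term $\modulo{u_b(t_{n-1}+) - u^N(t_n^-,0+)}$ under control across the gluing interfaces without it accumulating — this requires showing that the right trace of the constructed solution is compatible with $u_b$ in the Bardos--Leroux--Nédélec sense at each interface, which is precisely what the Lax-solution construction in item~1 of Proposition~\ref{prop:2} guarantees, so the argument must be phrased at the level of the wave-front-tracking approximations where this compatibility is explicit.
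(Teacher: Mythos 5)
Your overall strategy coincides with the paper's: freeze the flux at the left endpoint of each time slice, solve the resulting autonomous IBVPs with Proposition~\ref{prop:2}, glue, and pass to the limit; the verification that the limit satisfies Definition~\ref{def:solsk} (controlling the error $\Phi_k^\pm(t_{n-1},\cdot)-\Phi_k^\pm(t,\cdot)$ by $\norma{\partial_t\partial_u f}$ times the mesh, which is where hypothesis~\eqref{eq:f} enters) is also the paper's argument. The one genuinely different choice is the convergence mechanism: you extract a subsequence by Helly's theorem plus time--equicontinuity, whereas the paper takes a dyadic partition and proves that the whole sequence is Cauchy in $\C0([0,T];\L1(\reali_+;\reali))$ with rate $2^{-n}$, by comparing levels $n$ and $n+1$ through the flux--stability estimate~\eqref{eq:22} established in the proof of Theorem~\ref{thm:3}. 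Your route is more elementary but yields only subsequential convergence; to speak of \emph{the} solution constructed here (as Theorem~\ref{thm:3t} later requires) you would still need to identify the limit, e.g.\ via Proposition~\ref{prop:lipdep}.

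The genuine gap is in your treatment of the boundary--mismatch terms. Telescoping item~4 of Proposition~\ref{prop:2} slice by slice produces the sum $\sum_{n}\modulo{u_b(t_{n-1}+)-u^N(t_{n-1},0+)}$, and for $n\ge 2$ the trace $u^N(t_{n-1},0+)$ need not be close to $u_b(t_{n-1}+)$: the boundary condition encoded in Definition~\ref{def:solsk} is of Bardos--Leroux--N\'ed\'elec type, so the trace may sit a fixed distance from the datum on a whole time interval. Each summand can therefore be of the order of $\mathop{\rm diam}(\mathcal{U})$, and the sum diverges as $N\to\infty$; the facts you invoke (the trace lies in $\mathcal{U}$, and $u_b$ has small variation on each slice) do not prevent this. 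What does work, and what the paper uses, is the Glimm functional~\eqref{eq:Vepsilon}: the quantity $V(t)=\tv\left(u(t)\right)+\tv\left(u_b;[t,T]\right)+\modulo{u_b(t+)-u(t,0+)}$ is non--increasing along each autonomous slice (step~A.3 of the proof of Proposition~\ref{prop:2}) and is unchanged across each interface $t_{n-1}$, because switching the flux alters neither $u(t_{n-1})$ nor $u_b$; hence $V(t)\le V(0)$ globally (this is~\eqref{eq:8}), the mismatch term is paid once at $t=0$ rather than summed over slices, and the total variation, $\L\infty$ and Lipschitz--in--time bounds all follow from this single monotone quantity. Your argument should be rebuilt around that functional at the level of the wave--front--tracking approximations, as you yourself anticipate in your closing sentence.
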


\noindent The proof is deferred to~\S~\ref{subs:NAHL}.

\begin{theorem}
  \label{thm:3t}
  Let $f$ and $g$ both satisfy~\eqref{eq:f}. Fix
  $u_o \in (\L1 \cap \BV)(\reali_+;\reali)$ and
  $u_b \in (\L1 \cap \BV)([0,T];\reali)$. Call $u$ and $v$ the
  solutions to the problems
  \begin{displaymath}
    \!\!\!
    \left\{
      \begin{array}{lr@{\,}c@{\,}l}
        \partial_t u + \partial_x f (t,u) = 0
        & (t,x)
        & \in
        & [0,T] \times \reali_+
        \\
        u (0,x) = u_o (x)
        & x
        & \in
        & \reali_+
        \\
        u (t, 0) = u_b (t)
        & t
        & \in
        & [0,T]
      \end{array}
    \right.
    \mbox{ and } \
    \left\{
      \begin{array}{lr@{\,}c@{\,}l}
        \partial_t v + \partial_x g (t,v) = 0
        & (t,x)
        & \in
        & [0,T] \times \reali_+
        \\
        v (0,x) = u_o (x)
        & x
        & \in
        & \reali_+
        \\
        v (t, 0) = u_b (t)
        & t
        & \in
        & [0,T]
      \end{array}
    \right.
    \!\!
  \end{displaymath}
  constructed in Proposition~\ref{prop:2t}. Then, with
  $\mathcal{U} = \mathcal{U} (u_o, {u_b}_{|[0,t]})$ as
  in~\eqref{eq:19}, for all $t \in [0,T]$
  \begin{displaymath}
    \begin{aligned}
      \norma{u (t) - v (t)}_{\L1 (\reali_+; \reali)} \leq \ & \max \!
      \left\{1, \norma{\partial_u g}_{\L\infty ([0,t] \times
          \mathcal{U}; \reali)}\right\} \, \norma{\partial_u (f -
        g)}_{\L\infty ([0,t] \times \mathcal{U};\reali)}
      \\
      & \times \left(\tv (u_o) + \tv\left(u_b; [0,t]\right) +
        \modulo{u_b (0+) - u_o (0+)} \right) \, t.
    \end{aligned}
  \end{displaymath}
\end{theorem}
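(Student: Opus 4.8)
The plan is to reduce the time--dependent estimate to the autonomous one of Theorem~\ref{thm:3} by freezing the flux on a vanishing time mesh. Fix $t \in [0,T]$ and $n \in \naturali$; set $\Delta = t/n$, $t_i = i\,\Delta$, and abbreviate $V = \tv (u_o) + \tv (u_b; [0,t]) + \modulo{u_b (0+) - u_o (0+)}$. Define $u^n$ on $[0,t]$ recursively: $u^n (0) = u_o$, and on each slab $[t_i, t_{i+1}]$ let $u^n$ be the solution given by Proposition~\ref{prop:2} for the \emph{autonomous} IBVP with flux $f (t_i, \cdot) \in \C1 (\reali;\reali)$, initial datum $u^n (t_i)$ and boundary datum ${u_b}_{|[t_i, t_{i+1}]}$; define $v^n$ analogously with $g (t_i, \cdot)$. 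Iterating over the slabs the range, Lipschitz--in--time and total variation estimates of Proposition~\ref{prop:2}, the functions $u^n, v^n$ take values in $\mathcal{U}$ and are bounded in $\BV$ and Lipschitz in time uniformly in $n$. Since $\partial_u f$ and $\partial_u g$ are uniformly continuous on $[0,T] \times \mathcal{U}$, the entropy defect of $u^n$ (resp.\ $v^n$) as a solution of the $f$-- (resp.\ $g$--)IBVP in Theorem~\ref{thm:3t} tends to $0$ as $n \to \infty$; because Definition~\ref{def:solsk} is stable under $\L1$--convergence and, by Proposition~\ref{prop:lipdep}, those IBVPs have a unique solution, it follows that $u^n (t) \to u (t)$ and $v^n (t) \to v (t)$ in $\L1 (\reali_+; \reali)$. (Equivalently, $u^n$ and $v^n$ may be taken to be the approximations underlying the construction of $u, v$ in Proposition~\ref{prop:2t}.)

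Next I bound $\norma{u^n (t) - v^n (t)}_{\L1 (\reali_+; \reali)}$ by a one--slab comparison iterated over $i$. On $[t_i, t_{i+1}]$ introduce the auxiliary solution $w_i$ to the autonomous IBVP with flux $g (t_i, \cdot)$, initial datum $u^n (t_i)$ and boundary datum ${u_b}_{|[t_i, t_{i+1}]}$, so that
\begin{displaymath}
  \norma{u^n (t_{i+1}) - v^n (t_{i+1})}_{\L1 (\reali_+; \reali)}
  \le
  \norma{u^n (t_{i+1}) - w_i (t_{i+1})}_{\L1 (\reali_+; \reali)}
  +
  \norma{w_i (t_{i+1}) - v^n (t_{i+1})}_{\L1 (\reali_+; \reali)} .
\end{displaymath}
Theorem~\ref{thm:3}, applied on $[t_i, t_{i+1}]$ with the fluxes $f (t_i, \cdot)$, $g (t_i, \cdot)$ and common initial and boundary data, bounds the first term by $\max\{1, \norma{\partial_u g}_{\L\infty ([0,t] \times \mathcal{U})}\} \, \norma{\partial_u (f-g)}_{\L\infty ([0,t] \times \mathcal{U})} \, V_i \, \Delta$, where $V_i = \tv (u^n (t_i)) + \tv (u_b; [t_i, t_{i+1}]) + \modulo{u_b (t_i+) - u^n (t_i, 0+)}$ and we used $\mathcal{U} (u^n (t_i), {u_b}_{|[t_i, t_{i+1}]}) \subseteq \mathcal{U}$. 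The second term compares two solutions of the \emph{same} IBVP (flux $g (t_i, \cdot)$, boundary ${u_b}_{|[t_i, t_{i+1}]}$) starting from $u^n (t_i)$ and $v^n (t_i)$, so by the $\L1$--contraction contained in Proposition~\ref{prop:lipdep} it is at most $\norma{u^n (t_i) - v^n (t_i)}_{\L1 (\reali_+; \reali)}$. Summing over $i = 0, \ldots, n-1$ and using $u^n (0) = v^n (0) = u_o$,
\begin{displaymath}
  \norma{u^n (t) - v^n (t)}_{\L1 (\reali_+; \reali)}
  \le
  \max\{1, \norma{\partial_u g}_{\L\infty ([0,t] \times \mathcal{U})}\} \,
  \norma{\partial_u (f-g)}_{\L\infty ([0,t] \times \mathcal{U})} \;
  \Delta \sum_{i=0}^{n-1} V_i .
\end{displaymath}
In view of the convergences of the first paragraph, the theorem follows once $\limsup_{n \to \infty} \Delta \sum_{i=0}^{n-1} V_i \le V \, t$.

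The crux is precisely this last bound: one must check that neither the total variation nor the boundary--coupling term grows along the $n$ restarts. It suffices to show $\tv (u^n (t_i)) \le V$ for all $i$ and $n$, since then $\Delta \sum_i V_i \le V\,t + \Delta\,\tv (u_b; [0,t]) \to V\,t$. The bound on $\tv (u^n (t_i))$ comes from iterating the total variation estimate of Proposition~\ref{prop:2}, item~4, the slab contributions $\tv (u_b; [t_j, t_{j+1}])$ telescoping to $\tv (u_b; [0, t_i])$; the one genuinely delicate point is the jump $\modulo{u_b (t_i+) - u^n (t_i, 0+)}$ created each time the boundary Riemann problem is re--solved at a mesh point. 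This must be shown to be already accounted for --- equivalently, that restarting at $t_i$ is transparent for the boundary terms, the outgoing trace $u^n (t_i, 0+)$ being an admissible trace for the boundary state $u_b (t_i)$, so that the boundary Riemann solver leaves it unchanged. This is exactly the information supplied by the careful wave--front tracking construction behind Proposition~\ref{prop:2} and by the precise form of its total variation estimate; granting it, the bound on $\Delta \sum_i V_i$ holds, and letting $n \to \infty$ in the last display completes the proof.
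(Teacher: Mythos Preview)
Your overall strategy --- freeze the flux on a vanishing time mesh, apply the autonomous estimate on each slab, and propagate via $\L1$--contraction --- is exactly the paper's. The gap is in the ``crux'' paragraph, where you control $\Delta \sum_i V_i$.

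The displayed claim ``$\Delta \sum_i V_i \le V\,t + \Delta\,\tv (u_b; [0,t])$'' does \emph{not} follow from ``$\tv (u^n (t_i)) \le V$'' alone: the term $\Delta \sum_{i} \modulo{u_b (t_i+) - u^n (t_i, 0+)}$ is missing on the right, and it does not vanish as $n \to \infty$ (each summand is merely bounded, so the sum is of order $t$). What is actually needed is the Glimm--type bound $\tv (u^n (t_i)) + \modulo{u_b (t_i+) - u^n (t_i, 0+)} + \tv(u_b;[t_i,t]) \le V$. Your proposed fix --- that restarting is ``transparent'' because the outgoing trace is admissible --- does not give this: first, the flux changes from $f(t_{i-1},\cdot)$ to $f(t_i,\cdot)$ at each restart, so admissibility for the old flux says nothing about the new one; second, even when no wave enters, the boundary jump $\modulo{u_b(t_i+)-u^n(t_i,0+)}$ can be nonzero, and it is precisely this quantity that appears in $V_i$. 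Iterating Proposition~\ref{prop:2}, item~4, at the limit level accumulates one such jump per slab and yields exactly the uncontrolled sum above.

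The paper avoids this by staying at the wave--front--tracking $\epsilon$--level throughout. It uses the $\epsilon$--approximants $u^{i,\epsilon}_n$, $v^{i,\epsilon}_n$, the $\epsilon$--level estimate~\eqref{eq:22} from the proof of Theorem~\ref{thm:3}, and the functional $V^{i,\epsilon}_n$ of~\eqref{eq:Vin}. The point is that $V^{i,\epsilon}_n$ depends only on the piecewise--constant data (it is flux--independent), so its value is literally unchanged across each restart, and the monotonicity~\eqref{eq:8} then chains back to $V^{0,\epsilon}_n$, bounded in turn by $V$ via~\eqref{eq:app}. Only after the recursion collapses does one let $\epsilon \to 0$. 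This is the ``information supplied by the careful wave--front tracking construction'' you invoke, but it must be used at the $\epsilon$--level, where the boundary--jump term is part of a genuinely monotone functional; at the limit level that control is lost.
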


\section{The Case of the Segment}
\label{sec:SG}

We consider here the case~\eqref{eq:1sg} where $x$ varies in a
segment. All statements are presented in details below, but proofs are
omitted since they are entirely analogous to the ones presented in
Section~\ref{sec:Tech}. The definition of solution to~\eqref{eq:1sg}
is given analogously to Definition~\ref{def:solsk}, adding an obvious
term related to the boundary $x=L$.

\begin{definition}
  \label{def:solskSG}
  A \emph{weak entropy solution} to the IBVP~\eqref{eq:1sg} on the
  interval $[0,T]$ is a map
  $u \in \L\infty\left([0,T] \times [0,L]; \reali \right)$, such that
  for any $k \in \reali$ and for any test function
  $\phi \in \Cc1 (\reali \times \reali; \reali_+)$ satisfies the
  following entropy inequalities
  \begin{displaymath}
    \begin{aligned}
      & \int_0^T \int_0^L \left\{ \eta_k^+ \left(u (t,x)\right)
        \, \partial_t \phi (t,x) + \Phi_k^+\left(t,u (t,x)\right)
        \, \partial_x \phi (t,x) \right\} \d{x} \d{t}
      \\
      & + \int_0^L \eta_k^+ \left(u_o (x)\right) \, \phi (0,x) \d{x} -
      \int_0^L \eta_k^+ \left(u (T,x)\right) \, \phi (T,x) \d{x}
      \\
      & + \norma{\partial_u f}_{\L\infty ([0,T] \times
        \mathcal{U};\reali)} \left(\int_0^T \eta_k^+ \left(u_{b,1}
          (t)\right) \, \phi (t,0) \d{t} + \int_0^T \eta_k^+
        \left(u_{b,2} (t)\right) \, \phi (t,L) \d{t}\right) \geq 0,
    \end{aligned}
  \end{displaymath}
  and
  \begin{displaymath}
    \begin{aligned}
      & \int_0^T \int_0^L \left\{ \eta_k^- \left(u (t,x)\right)
        \, \partial_t \phi (t,x) + \Phi_k^-\left(t,u (t,x)\right)
        \, \partial_x \phi (t,x) \right\} \d{x} \d{t}
      \\
      & + \int_0^L \eta_k^- \left(u_o (x)\right) \, \phi (0,x)
      \d{x} - \int_0^L \eta_k^- \left(u (T,x)\right) \, \phi
      (T,x) \d{x}
      \\
      & + \norma{\partial_u f}_{\L\infty ([0,T] \times
        \mathcal{U};\reali)} \left(\int_0^T \eta_k^- \left(u_{b,1}
          (t)\right) \, \phi (t,0) \d{t} + \int_0^T \eta_k^-
        \left(u_{b,2} (t)\right) \, \phi (t,L) \d{t}\right) \geq 0,
    \end{aligned}
  \end{displaymath}
where $\mathcal{U} = \mathcal{U} (u_o, {u_{b,1}}_{|[0,T]}, {u_{b,2}}_{|[0,T]})$ as
  in~\eqref{eq:19}.
\end{definition}

Throughout, we denote $\ub = \left(u_{b,1}, u_{b,2}\right)$ and
$\boldsymbol{w_b} = \left(w_{b,1}, w_{b,2}\right)$.

\begin{proposition}
  \label{prop:lipdepSG}
  Let $f \in \C1([0,T] \times \reali;\reali)$ be such that
  $\{u \mapsto \partial_t f(t.u)\} \in \Wloc{1}{\infty}
  (\reali;\reali)$
  for all $t \in [0,T]$,
  $u_o,w_o \in (\L1 \cap \L\infty)([0,L]; \reali)$ and
  $\ub, \boldsymbol{w_b} \in (\L1 \cap \L\infty)([0,T]; \reali^2)$.
  Let $u,w \in \L\infty ([0,L] \times [0,T]; \reali)$ solve the
  IBVP~\eqref{eq:1sg}, with data $(u_o,\ub)$ and
  $(w_o, \boldsymbol{w_b})$ respectively, in the sense of
  Definition~\ref{def:solskSG}, with $u$ and $w$ that both admit a trace
  for $x \to 0+$ and for $x \to L-$ for a.e.~$t \in [0,T]$. Then, for all
  $t \in [0,T]$,
  \begin{displaymath}
    \norma{u (t) - w (t)}_{\L1 ([0,L]; \reali)}
    \leq
    \norma{u_o - w_o}_{\L1 (\reali_+[0,L]; \reali)}
    +
    \norma{\partial_u f}_{\L\infty ([0,t] \times \mathcal{U};\reali)}
    \sum_{i=1}^2
    \norma{u_{b,i} - w_{b,i}}_{\L1 ([0,t]; \reali)},
  \end{displaymath}
  where
  $\mathcal{U} = \mathcal{U} (\ub_{|[0,t]}, \, \boldsymbol{w_b}_{|[0,t]})$ is
  as in~\eqref{eq:19}.
\end{proposition}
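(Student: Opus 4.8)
The plan is to adapt Kru\v{z}kov's doubling-of-variables method to the initial--boundary value problem, following~\cite{BardosLerouxNedelec, bordo}. In fact the argument is identical to the one proving Proposition~\ref{prop:lipdep}, the only new ingredient being a second boundary term, at $x=L$, handled just like the one at $x=0$; I therefore only describe the structure. Set $M=\norma{\partial_u f}_{\L\infty([0,t]\times\mathcal{U};\reali)}$. Since $\modulo{a-b}=(a-b)^++(a-b)^-$, it suffices to prove, for every $t\in[0,T]$,
  \begin{displaymath}
    \int_0^L \bigl(u(t,x)-w(t,x)\bigr)^+ \d{x}
    \le
    \int_0^L (u_o-w_o)^+ \d{x}
    + M\sum_{i=1}^{2}\int_0^t \bigl(u_{b,i}(\tau)-w_{b,i}(\tau)\bigr)^+ \d{\tau},
  \end{displaymath}
  and the symmetric inequality obtained by interchanging the roles of $u$ and $w$ (equivalently, of $\eta^+$ and $\eta^-$); summing the two gives the assertion.

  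The first step is the doubling. I would fix $\theta\in\Cc1(\reali;\reali_+)$, a standard mollifier $\rho_n$, and put $\psi_n(t,x,s,y)=\theta\!\bigl(\tfrac{t+s}{2}\bigr)\rho_n(t-s)\rho_n(x-y)$; then write the first entropy inequality of Definition~\ref{def:solskSG} for $u$ with level $k=w(s,y)$ and test function $(t,x)\mapsto\psi_n(t,x,s,y)$, the second one for $w$ with level $k=u(t,x)$ and test function $(s,y)\mapsto\psi_n(t,x,s,y)$, integrate the former over $(s,y)\in[0,T]\times[0,L]$, the latter over $(t,x)\in[0,T]\times[0,L]$, and add. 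In the interior the two entropy terms combine into $\bigl(u(t,x)-w(s,y)\bigr)^+\bigl(\partial_t+\partial_s\bigr)\psi_n$, while the two entropy--flux terms combine into a single one that vanishes as $n\to\infty$ precisely because $u$ and $w$ share the flux $f$ --- this is where, for two different fluxes $f$ and $g$ as in Theorem~\ref{thm:3t}, the extra term $\sgnp(u-w)\bigl((f-g)(u)-(f-g)(w)\bigr)$ would appear. Letting $n\to\infty$, using the $\L\infty$ bounds on $u,w$, the continuity of $f$, and the existence of the traces $u(\cdot,0+),\,w(\cdot,0+),\,u(\cdot,L-),\,w(\cdot,L-)$ to pass to the limit in the boundary integrals, I expect to arrive at
  \begin{displaymath}
    \int_0^T\! \Bigl(\int_0^L (u-w)^+\d{x}\Bigr)\theta'(\tau)\,\d{\tau}
    + \theta(0)\!\int_0^L (u_o-w_o)^+\d{x}
    + \int_0^T\! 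B(\tau)\,\theta(\tau)\,\d{\tau} \ge 0
  \end{displaymath}
  for every $\theta\in\Cc1(\reali;\reali_+)$, where $B(\tau)$ gathers the boundary contributions at $x=0$ and $x=L$; the part of $B$ coming from $x=L$ is the one from $x=0$ with $\bigl(0+,u_{b,1},w_{b,1}\bigr)$ replaced by $\bigl(L-,u_{b,2},w_{b,2}\bigr)$ and the sign of the flux reversed, and this is the sole place at which the present proof differs from that of Proposition~\ref{prop:lipdep}.

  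The hard part is the estimate $B(\tau)\le M\sum_{i=1}^{2}\bigl(u_{b,i}(\tau)-w_{b,i}(\tau)\bigr)^+$ for a.e.~$\tau\in[0,T]$. The tool is the boundary entropy conditions: testing the inequalities of Definition~\ref{def:solskSG} for $u$ (resp.~$w$) against test functions of the form $(t,x)\mapsto\theta(t)\zeta_m(x)$ and $(t,x)\mapsto\theta(t)\zeta_m(L-x)$, with $\zeta_m$ a cut-off concentrating at the endpoint, and letting $m\to\infty$, the existence of the traces yields, for a.e.~$\tau$ and all $k\in\reali$,
  \begin{displaymath}
    \Phi_k^\pm\!\bigl(\tau,u(\tau,0+)\bigr)\le M\,\eta_k^\pm\!\bigl(u_{b,1}(\tau)\bigr),
    \qquad
    -\Phi_k^\pm\!\bigl(\tau,u(\tau,L-)\bigr)\le M\,\eta_k^\pm\!\bigl(u_{b,2}(\tau)\bigr),
  \end{displaymath}
  and the analogous relations for $w$ with $w_{b,1},w_{b,2}$. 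Inserting these into $B(\tau)$ --- choosing at each endpoint the entropy level $k$ among the values there of the two traces and the two boundary data, and using $\modulo{f(\tau,a)-f(\tau,b)}\le M\modulo{a-b}$ for $a,b\in\mathcal{U}$ --- a case analysis on the mutual order of traces and data shows that the trace-dependent quantities drop out, leaving $B(\tau)$ bounded by a combination of the flux increments $f(\tau,u_{b,i}(\tau))-f(\tau,w_{b,i}(\tau))$ alone; this gives the desired inequality with $\mathcal{U}=\mathcal{U}(\ub_{|[0,t]},\boldsymbol{w_b}_{|[0,t]})$ as in~\eqref{eq:19}. I expect this to be the only genuinely delicate step: the traces are not controlled by the data at an outflow boundary, and disposing of them is exactly where the standing hypothesis that $u$ and $w$ admit traces is used; it is carried out as in~\cite{BardosLerouxNedelec, bordo}.

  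Finally, choosing $\theta$ so as to approximate $\caratt{[0,t]}$ and recalling that, after modification on a negligible set of times, $\tau\mapsto u(\tau)$ and $\tau\mapsto w(\tau)$ are continuous from $[0,T]$ into $\Lloc1([0,L];\reali)$, the last displayed inequality reduces to the one-sided estimate stated above. Adding to it the estimate obtained by interchanging $u$ and $w$ yields the claim.
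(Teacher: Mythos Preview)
Your proposal is correct and takes essentially the same approach as the paper. The paper gives no explicit proof of Proposition~\ref{prop:lipdepSG}, stating only that the segment-case proofs are analogous to the half-line ones; the proof of Proposition~\ref{prop:lipdep} in turn is a brief sketch invoking the doubling-of-variables method as in~\cite{Martin} together with the boundary treatment via Lemmas~\ref{lem:bc} and~\ref{lem:equiv} and~\cite[Theorem~4.3]{bordo}, which is precisely the strategy you outline (your direct extraction of the trace inequalities $\Phi_k^\pm\bigl(\tau,u(\tau,0+)\bigr)\le M\,\eta_k^\pm\bigl(u_{b,1}(\tau)\bigr)$ from Definition~\ref{def:solskSG} amounts to the content of Lemma~\ref{lem:bc} in this setting).
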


Along the lines of the preceding sections, we present first the
results for a time independent flux and then those related to the non
autonomous case. We provide all those details where the present
results differ from those of sections~\ref{sec:autHL}
and~\ref{sec:nonautHL}.

\subsection{The Autonomous Case on the Segment}
\label{sec:autSG}

Consider the following autonomous IBVP, which is a particular case of~\eqref{eq:1sg}:
\begin{equation}
  \label{eq:1autSG}
  \left\{
    \begin{array}{l@{\qquad}r@{\,}c@{\,}l}
      \partial_t u + \partial_x f (u) = 0
      & (t,x)
      & \in
      & [0,T] \times [0,L]
      \\
      u (0,x) = u_o (x)
      & x
      & \in
      & [0,L]
      \\
      \left(u (t, 0), u (t,L)\right) = \ub (t)
      & t
      & \in
      & [0,T]\,.
    \end{array}
  \right.
\end{equation}
Solutions to~\eqref{eq:1autSG} are understood in the sense of
Definition~\ref{def:solskSG}. Observe that Proposition~\ref{prop:2SG}
applies to~\eqref{eq:1autSG}, under the hypothesis
$f \in \C1 (\reali;\reali)$.

The next Proposition ensures the existence of solutions to~\eqref{eq:1autSG}, as well as some of their properties, and it is the analogue to Proposition~\ref{prop:2}, with minor modifications in the estimates.

\begin{proposition}
  \label{prop:2SG}
  Let $f \in \Wloc{1}{\infty}(\reali; \reali)$,
  $u_o \in (\L1 \cap \BV) ([0,L]; \reali)$,
  $\ub \in (\L1 \cap \BV) ([0,T]; \reali^2)$. Then,
  problem~\eqref{eq:1autSG} admits a solution $u$ in the sense of
  Definition~\ref{def:solskSG}, with the properties:
  \begin{enumerate}
  \item If $u_o$ and $\ub$ are piecewise constant, then for $t$ small,
    the map $t \to u (t)$ coincides with the gluing of Lax solutions
    to Riemann problems at the points of jumps of $u_o$, at $x=0$ and
    at $x=L$.
  \item Range of $u$: with the notation in~\eqref{eq:19},
    $u (t,x) \in \mathcal{U} (u_o, {\ub}_{|[0,t]})$ for
    a.e.~$(t,x) \in [0,T] \times [0,L]$. Hence, for all $t \in [0,T]$,
    \begin{displaymath}
      \norma{u (t)}_{\L\infty ([0,L]; \reali)}
      \leq
      \max \left\{ \norma{u_o}_{\L\infty ([0,L]; \reali)}, \,
        \norma{u_{b,1}}_{\L\infty ([0,t];\reali)}, \,
        \norma{u_{b,2}}_{\L\infty ([0,t];\reali)}
      \right\}.
    \end{displaymath}
  \item $u$ is Lipschitz continuous in time: for all
    $t_1,t_2 \in [0,T]$,
    \begin{displaymath}
      \begin{array}{@{}c@{}}
        \norma{u (t_1) - u (t_2)}_{\L1 ([0,L]; \reali)}
        \leq
        C (u_o, \ub) \;
        \norma{f'}_{\L\infty (\mathcal{U};\reali)} \;
        \modulo{t_2 - t_1},
        \\
        \!\!\!
        C (u_o, \ub)
        =
        \tv (u_o)
        + \tv (\ub; [0,t_1\vee t_2])
        + \modulo{u_{b,1} (0+) - u_o (0+)}
        + \modulo{u_{b,2} (0+) - u_o (L-)}
      \end{array}
    \end{displaymath}
    where
    $\mathcal{U} = \mathcal{U} (u_o, {\ub}_{|[0,t_1 \vee t_2]})$, with
    the notation~\eqref{eq:19}.
  \item Total variation estimate: for all $t \in [0,T]$
    \begin{displaymath}
      \tv\left(u (t) \right)
      \leq
      \tv (u_o)
      +
      \tv \left(\ub; [0,t]\right)
      +
      \modulo{u_{b,1}(0+) - u_o (0+)}
      +\modulo{u_{b,2} (0+) - u_o (L-)}\,.
    \end{displaymath}
  \end{enumerate}
\end{proposition}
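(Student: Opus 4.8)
The plan is to follow the proof of Proposition~\ref{prop:2} line by line, the only genuinely new ingredient being the second boundary at $x=L$, which is treated symmetrically to the one at $x=0$. First I approximate: pick piecewise linear fluxes $f_n \to f$ in $\Wloc{1}{\infty}(\reali;\reali)$ and piecewise constant data $u_o^n \to u_o$ in $\L1$, $\ub^n \to \ub$ in $\L1$, with $\tv(u_o^n) \to \tv(u_o)$, $\tv(\ub^n;[0,T]) \to \tv(\ub;[0,T])$ and with $u_o^n(0+) \to u_o(0+)$, $u_o^n(L-) \to u_o(L-)$, $\ub^n(0+) \to \ub(0+)$. For these data I build the front tracking approximation $u_n$ (see~\cite{BressanLectureNotes, DafermosBook}) by gluing Lax solutions of three kinds of Riemann problems: the classical ones at the jumps of $u_o^n$ and at interior interaction points; at $x=0$, the boundary Riemann problem connecting $u_{b,1}^n$ to the emerging trace through waves of non-negative speed only; at $x=L$, the mirror boundary Riemann problem connecting the emerging trace to $u_{b,2}^n$ through waves of non-positive speed only. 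Each front is prolonged until it meets another front, hits $x=0$ or $x=L$, or the boundary data jump; at that instant the pertinent Riemann problem is solved afresh. This proves item~1.

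Next I control the total variation through a Glimm-type functional, e.g.
\[
  \Upsilon_n(t)
  =
  \tv\bigl(u_n(t); [0,L]\bigr)
  + \modulo{u_{b,1}^n(t+) - u_n(t,0+)}
  + \modulo{u_{b,2}^n(t+) - u_n(t,L-)} ,
\]
and I check, running through the finitely many relevant times, that $\Upsilon_n$ does not increase at interior interactions, does not increase when a front hits $x=0$ or $x=L$ --- this is exactly where the choice of the boundary solvers enters, since an impinging front is then either absorbed or reflected without creating total variation --- and increases by at most $\modulo{\ub^n(t+) - \ub^n(t-)}$ whenever the boundary data jump. As $\Upsilon_n(0+) \le \tv(u_o) + \modulo{u_{b,1}(0+) - u_o(0+)} + \modulo{u_{b,2}(0+) - u_o(L-)}$ up to vanishing errors, this gives item~4 uniformly in $n$. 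As in Proposition~\ref{prop:2}, one also checks that only finitely many fronts and interactions occur on $[0,T]$: their number does not grow at interactions or at either boundary, and $\ub^n$ has finitely many jumps.

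The remaining items follow the half--line scheme verbatim. The maximum principle for front tracking --- each of the three Riemann solvers returns states in the convex hull of its incoming states --- yields by induction $u_n(t,x) \in \mathcal{U}(u_o^n, {\ub}^n_{|[0,t]})$, hence item~2 in the limit. For item~3, between consecutive interaction times $t \mapsto u_n(t)$ moves in $\L1([0,L];\reali)$ at speed at most $\sum_{\text{fronts}} (\text{speed}) \times (\text{strength}) \le \norma{f_n'}_{\L\infty(\mathcal{U};\reali)} \, \tv\bigl(u_n(t); [0,L]\bigr)$, which by item~4 is bounded by $\norma{f'}_{\L\infty(\mathcal{U};\reali)} \, C(u_o,\ub)$ up to vanishing errors; integrating in time gives the Lipschitz estimate. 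Finally, items~3 and~4 together with Helly's Theorem furnish a subsequence converging in $\Lloc1$ to a map $u$ inheriting items~2--4; since each $u_n$ satisfies the semi--Kru\v{z}kov inequalities of Definition~\ref{def:solskSG} relative to $f_n$ up to errors vanishing with the approximation parameters --- the boundary terms being built precisely so that the two impinging--trace contributions are absorbed by the $\norma{\partial_u f}_{\L\infty}$ coefficient, cf.~\cite{Martin, Vovelle} --- and since this formulation is stable under $\L1$ convergence \cite[Chapter~2, Remark~7.33]{MalekEtAlBook}, the limit $u$ solves~\eqref{eq:1autSG} in the sense of Definition~\ref{def:solskSG}.

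I expect the main obstacle to be the bookkeeping for $\Upsilon_n$ with two boundaries present: one must guarantee that fronts bouncing back and forth between $x=0$ and $x=L$ never make $\Upsilon_n$ increase, and that the specific form of the boundary terms in Definition~\ref{def:solskSG} is exactly what survives the passage to the limit. The rest is the routine front tracking machinery already deployed for the half line.
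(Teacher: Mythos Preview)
Your proposal is correct and follows exactly the route the paper indicates: the paper omits the proof of Proposition~\ref{prop:2SG} and states it is ``entirely analogous'' to that of Proposition~\ref{prop:2}, which is precisely the adaptation you carry out, adding the symmetric boundary treatment at $x=L$. The only cosmetic difference is in the Glimm bookkeeping: the paper's functional~\eqref{eq:Vepsilon} includes the term $\tv(u_b^\epsilon;[t,T])$ so that $V^\epsilon$ is genuinely non-increasing even across jumps of the boundary data, whereas your $\Upsilon_n$ omits this term and you instead track the jump-by-jump increase separately; both lead to the same bound in item~4.
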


\noindent We conclude this Section stating the stability of the
solutions to~\eqref{eq:1autSG} with respect to the flux, similarly to
Theorem~\ref{thm:3}.

\begin{theorem}
  \label{thm:3SG}
  Let $f,g \in \C1 (\reali; \reali)$,
  $u_o \in (\L1 \cap \BV) ([0,L]; \reali)$ and
  $\ub \in (\L1 \cap \BV) ([0,T]; \reali^2)$. Call $u$ and $v$ the
  solutions to the IBVP~\eqref{eq:1autSG}, with flux $f$ and $g$
  respectively, constructed in Proposition~\ref{prop:2SG}. Then, with
  $\mathcal{U} = \mathcal{U} (u_o, {\ub}_{|[0,t]})$ as
  in~\eqref{eq:19}, for all $t \in [0,T]$,
  \begin{align*}
    &
      \norma{u (t) - v (t)}_{\L1 ([0,L]; \reali)}
      \\
    \leq \
    & \max \!
      \left\{1, \norma{g'}_{\L\infty (\mathcal{U}; \reali)}\right\} \,
      \norma{D(f - g)}_{\L\infty (\mathcal{U};\reali)}
    \\
    & \times \left(\tv (u_o) + \tv\left(\ub; [0,t]\right) +
        \modulo{u_{b,1} (0+) - u_o (0+)} +
      \modulo{u_{b,2} (0+) - u_o (L-)}\right) \, t.
      \end{align*}
\end{theorem}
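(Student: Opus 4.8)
\emph{Plan.} The statement is the autonomous, segment analogue of Theorem~\ref{thm:3}, and the plan is to transcribe that proof, treating the two endpoints $x=0$ and $x=L$ on the same footing, so that the only change is the appearance of a second boundary term. First I would fix $t \in [0,T]$, set $\mathcal{U} = \mathcal{U}(u_o, \ub_{|[0,t]})$, and recall that, by Proposition~\ref{prop:2SG}, property~2, both $u$ and $v$ take values in $\mathcal{U}$ on $[0,t]\times[0,L]$; hence every flux value entering the computation is taken at arguments in $\mathcal{U}$, and all $\L\infty$ norms below are understood on $\mathcal{U}$. Next, using the wave--front tracking scheme of Proposition~\ref{prop:2SG} for the two fluxes (with piecewise linear interpolations $f_\nu$, $g_\nu$ of $f$, $g$ and piecewise constant approximations of $u_o$, $\ub$), I would produce approximate solutions $u^\nu \to u$ and $v^\nu \to v$ in $\Lloc1$, the convergence being guaranteed by the uniform $\BV$ bound of property~4 and the Lipschitz--in--time bound of property~3. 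I would also observe that, by Proposition~\ref{prop:lipdepSG} applied with equal fluxes, the second problem generates an $\L1$--Lipschitz (indeed $\L1$--contractive in the initial datum, the boundary datum $\ub$ being kept fixed) semigroup $S^g$, so that $v(t) = S^g_t u_o$.

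The core of the argument is the integral stability estimate of~\cite[Theorem~2.9]{BressanLectureNotes}: regarding $t \mapsto u(t)$ as an approximate $S^g$--trajectory,
\begin{displaymath}
  \norma{u(t) - v(t)}_{\L1([0,L];\reali)}
  \leq
  \int_0^t
  \liminf_{h \to 0^+}
  \frac{\norma{u(\tau+h) - S^g_h u(\tau)}_{\L1([0,L];\reali)}}{h}
  \, \d\tau .
\end{displaymath}
I would estimate the integrand at the level of the front tracking approximation $u^\nu(\tau)$, which is piecewise constant with finitely many fronts. Each interior front, separating states $u^-$ and $u^+$, is advected by $u^\nu$ at a difference quotient of $f$ (or at a rarefaction--front speed for $f$), whereas $S^g_h$ would move it at the corresponding speed for $g$; the two speeds differ by at most $\norma{D(f-g)}_{\L\infty(\mathcal{U};\reali)}$, so this front contributes to the $\L1$ distance an amount $\norma{D(f-g)}_{\L\infty(\mathcal{U})}\,\modulo{u^+-u^-}\,h + o(h)$, and summing over interior fronts gives $\norma{D(f-g)}_{\L\infty(\mathcal{U})}\,\tv(u^\nu(\tau))\,h$. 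The fronts interacting with $x=0$ or with $x=L$ are handled exactly as the boundary fronts in Theorem~\ref{thm:3}: the mismatch between the $f$-- and the $g$--boundary Riemann solvers acting on the same trace and the same boundary datum generates, over a time $h$, an $\L1$ error bounded by $\norma{g'}_{\L\infty(\mathcal{U})}\,\norma{D(f-g)}_{\L\infty(\mathcal{U})}$ times the strength of the incoming front times $h$, up to $o(h)$. Passing to the limit $\nu \to \infty$ and using lower semicontinuity of the total variation under $\Lloc1$ convergence, I would obtain
\begin{displaymath}
  \liminf_{h \to 0^+}\frac{\norma{u(\tau+h) - S^g_h u(\tau)}_{\L1([0,L];\reali)}}{h}
  \leq
  \max\!\left\{1, \norma{g'}_{\L\infty(\mathcal{U};\reali)}\right\}\,
  \norma{D(f-g)}_{\L\infty(\mathcal{U};\reali)}\,
  \tv\!\left(u(\tau)\right) .
\end{displaymath}

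Finally I would insert the total variation bound of Proposition~\ref{prop:2SG}, property~4,
\begin{displaymath}
  \tv\!\left(u(\tau)\right)
  \leq
  \tv(u_o) + \tv(\ub;[0,\tau]) + \modulo{u_{b,1}(0+) - u_o(0+)} + \modulo{u_{b,2}(0+) - u_o(L-)} ,
\end{displaymath}
which is nondecreasing in $\tau$, bound it by its value at $\tau = t$, and integrate over $[0,t]$; since $\mathcal{U}(u_o,\ub_{|[0,\tau]}) \subseteq \mathcal{U}$ for $\tau \le t$, this produces exactly the asserted inequality. The step I expect to be the main obstacle is the boundary contribution to the instantaneous error: one must quantify, uniformly in $\nu$, how much the $f$-- and the $g$--boundary Riemann solvers from the proof of Proposition~\ref{prop:2SG} disagree when fed the same data (this is precisely what produces the factor $\max\{1,\norma{g'}_{\L\infty(\mathcal{U})}\}$ rather than $\norma{D(f-g)}_{\L\infty(\mathcal{U})}$ alone), and one must check that the resulting bound is stable under the front--tracking limit. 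The compactness argument, the semigroup construction, and the interior wave--speed comparison are a verbatim repetition of the half--line case, with the single boundary term simply replaced by two.
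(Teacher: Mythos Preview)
Your overall strategy---wave front tracking, Bressan's integral estimate, instantaneous error via Riemann problems, then pass to the limit---is the paper's (which simply declares the proof ``entirely analogous'' to that of Theorem~\ref{thm:3}). Two points in your execution, however, diverge from the paper, and one of them is a real gap.

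First, the factor $\max\{1,\norma{g'}_{\L\infty(\mathcal{U})}\}$ does \emph{not} come from the boundary Riemann solver. In the proof of Theorem~\ref{thm:3} the paper works entirely at the $\epsilon$--level and defines $S^{f^\epsilon}$, $S^{g^\epsilon}$ as semigroups on \emph{pairs} $(w,w_b)$; the factor is simply $\Lip(S^{g^\epsilon})$ on that pair space, obtained from Proposition~\ref{prop:lipdepSG} (see~\eqref{eq:quellacheviene}), and it sits in front of the integral in~\cite[Theorem~2.9]{BressanLectureNotes}. The point $\bar x=0$ (and, here, $\bar x=L$) is then handled \emph{exactly like an interior jump}: one sets $w^\ell=u^\epsilon_{b}(\tau+)$, $w^r=u^\epsilon(\tau,0+)$ and the very same Riemann computation~\eqref{eq:no} bounds the local error by $\norma{D(f-g)}_{\L\infty(\mathcal{U})}\,\modulo{w^r-w^\ell}$, with no extra $\norma{g'}$ factor. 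Your assertion that the boundary produces $\norma{g'}\,\norma{D(f-g)}$ times a front strength is unsubstantiated and is not ``exactly as the boundary fronts in Theorem~\ref{thm:3}''.

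Second---and this is the gap---since the boundary points contribute Riemann problems of their own, the integrand is bounded not by $\norma{D(f-g)}\,\tv\!\left(u(\tau)\right)$ but by
\[
\norma{D(f-g)}_{\L\infty(\mathcal{U})}
\left(\tv\!\left(u^\epsilon(\tau)\right)
+\modulo{u^\epsilon_{b,1}(\tau+)-u^\epsilon(\tau,0+)}
+\modulo{u^\epsilon_{b,2}(\tau+)-u^\epsilon(\tau,L-)}\right).
\]
Invoking only Point~4 of Proposition~\ref{prop:2SG} to control $\tv(u(\tau))$ does not bound the two boundary jump terms at time $\tau$. The paper handles this via the Glimm functional $V^\epsilon$ (the segment analogue of~\eqref{eq:Vepsilon}), which dominates the full bracket above and is nonincreasing, so $V^\epsilon(\tau)\le V^\epsilon(0)$ yields exactly the expression in the statement. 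Relatedly, the paper applies Bressan's formula at the $\epsilon$--level---where the trajectories are exact orbits of genuine semigroups and piecewise constant---and passes $\epsilon\to0$ only at the end (via~\eqref{eq:a1}--\eqref{eq:a3}); your plan of applying the formula to the limiting $S^g$ and then estimating the integrand through $u^\nu$ would require interchanging $\liminf_{h\to0}$ with the limit in $\nu$, which you have not justified.
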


\subsection{The Non Autonomous Case on the Segment}
\label{sec:nonautSG}

We now extend the results obtained in Section~\ref{sec:autSG} to problem~\eqref{eq:1sg}.

\begin{proposition}
  \label{prop:2tSG}
  Let $f$ satisfy~\eqref{eq:f}.
  Fix $u_o \in (\L1 \cap \BV) ([0,L]; \reali)$ and
  $\ub \in (\L1 \cap \BV) ([0,T]; \reali^2)$. Then,
  problem~\eqref{eq:1sg} admits a solution $u$ in the sense of
  Definition~\ref{def:solskSG}, with:
  \begin{enumerate}
  \item Range of $u$: with the notation in~\eqref{eq:19},
    $u (t,x) \in \mathcal{U} (u_o, {\ub}_{|[0,t]})$ for
    a.e.~$(t,x) \in [0,T] \times [0,L]$. Hence, for all
    $t \in [0,T]$,
    \begin{displaymath}
      \norma{u (t)}_{\L\infty ([0,L]; \reali)}
      \leq
      \max \left\{ \norma{u_o}_{\L\infty ([0,L]; \reali)}, \,
        \norma{u_{b,1}}_{\L\infty ([0,t];\reali)}, \,
        \norma{u_{b,2}}_{\L\infty ([0,t];\reali)}
      \right\}.
    \end{displaymath}
  \item $u$ is Lipschitz continuous in time: for all
    $t_1,t_2 \in [0,T]$,
    \begin{displaymath}
      \begin{array}{@{}c@{}}
        \norma{u (t_1) - u (t_2)}_{\L1 ([0,L]; \reali)}
        \leq
        C (u_o, \ub) \;
        \norma{\partial_u f}_{\L\infty ([0,t_1 \vee t_2] \times \mathcal{U};\reali)} \;
        \modulo{t_2 - t_1},
        \\
        \!\!\!
        C (u_o, \ub)
        =
        \tv (u_o) + \tv (\ub; [0,t_1\vee t_2])
        + \modulo{u_{b,1} (0+) - u_o (0+)}
        + \modulo{u_{b,2} (0+)
        - u_o (L-)}
      \end{array}
    \end{displaymath}
    where $\mathcal{U} = \mathcal{U} (u_o, {\ub}_{|[0,t_1 \vee t_2]})$,
    with the notation~\eqref{eq:19}.
  \item Total variation estimate: for all $t \in [0,T]$
    \begin{displaymath}
      \tv\left(u (t) \right)
      \leq
      \tv (u_o)
      +
      \tv \left(\ub; [0,t]\right)
      +
      \modulo{u_{b,1}(0+) - u_o (0+)}
      +
      \modulo{u_{b,2} (0+) - u_o (L-)}\,.
    \end{displaymath}
  \end{enumerate}
\end{proposition}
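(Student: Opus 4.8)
The plan is to transpose to the segment the proof of Proposition~\ref{prop:2t}, carrying out at each frozen‑in‑time slab the autonomous front‑tracking construction of Proposition~\ref{prop:2SG}, which already contains the term at $x=L$; no genuinely new phenomenon appears with respect to the half–line case, only a doubling of the boundary bookkeeping. Concretely: for $\nu\in\naturali$ I would pick piecewise constant $u_o^\nu\to u_o$ and $\ub^\nu\to\ub$ in $\L1$ with $\tv(u_o^\nu)\le\tv(u_o)$, $\tv(\ub^\nu;[0,t])\le\tv(\ub;[0,t])$ and $u_o^\nu(0+)\to u_o(0+)$, $u_o^\nu(L-)\to u_o(L-)$, $u_{b,i}^\nu(0+)\to u_{b,i}(0+)$; and pick $f^\nu$ piecewise constant in $t$ on the mesh $t_k=kT/\nu$ and, on each $[t_k,t_{k+1})$, piecewise linear in $u$ on a mesh of width $1/\nu$, so that $f^\nu(t,\cdot)\to f(t,\cdot)$ uniformly on compacts and $\partial_u f^\nu\to\partial_u f$ in $\L\infty$ on $[0,T]\times K$ for every compact $K$ — the $\Wloc1\infty$–in–$t$ regularity of $\partial_u f$ in~\eqref{eq:f} being exactly what makes the latter convergence possible. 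Then I would run wave‑front tracking on $[0,T]\times[0,L]$ with flux $f^\nu$: at $t=0$, at every jump of $u_o^\nu$, $u_{b,1}^\nu$, $u_{b,2}^\nu$, at every internal interaction and whenever a front reaches $x=0$ or $x=L$, solve the internal or boundary Riemann problem with the autonomous solver of Proposition~\ref{prop:2SG} for the current flux $f^\nu(t,\cdot)$; at each $t_k$ recompute all speeds and re-solve every internal and both boundary Riemann problems for the new flux $f^\nu(t_k,\cdot)$. Since $f^\nu$ is piecewise linear in $u$ on each slab, between consecutive events $u^\nu(t)$ is piecewise constant in $x$ with finitely many fronts, and the usual scalar interaction count makes the algorithm well defined on all of $[0,T]$. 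I would \emph{not} restart the autonomous construction from scratch on each slab: doing so would feed the new slab the corner mismatches $\modulo{u_{b,i}^\nu(t_k+)-u^\nu(t_k)}$, which are not obviously summable over the $\nu$ steps; carrying the time dependence inside a single construction is the ``careful construction'' referred to in the introduction.

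Next I would establish, by induction over the (locally finite) events, the uniform estimates. (i) \emph{Range}: each internal and each boundary Riemann solver returns values in the closed convex hull of the states fed to it, whence $u^\nu(t,x)\in\mathcal{U}(u_o^\nu,{\ub^\nu}_{|[0,t]})$ and, in the limit, property~1 and the $\L\infty$ bound. (ii) \emph{Total variation}: $t\mapsto\tv(u^\nu(t);[0,L])$ does not increase at internal interactions, does not increase at boundary interactions, and does not increase across a flux–change time $t_k$ — re-solving an internal Riemann problem $(u_-,u_+)$ yields a monotone profile of variation exactly $\modulo{u_+-u_-}$, so the sum over fronts is unchanged, and, exactly as in Proposition~\ref{prop:2t}, re-imposing the two boundary conditions produces no net variation; hence the only increases are jumps of $\ub^\nu$, totalling $\tv(\ub^\nu;[0,t])$, and adding the $t=0$ corner contributions $\modulo{u_{b,1}^\nu(0+)-u_o^\nu(0+)}+\modulo{u_{b,2}^\nu(0+)-u_o^\nu(L-)}$ gives property~3 for $u^\nu$. (iii) \emph{Time–Lipschitz}: each front travels at speed $\partial_u f^\nu$ between its two states, hence at speed $\le\norma{\partial_u f^\nu}_{\L\infty([0,t_1\vee t_2]\times\mathcal{U})}$, and summing $\L1$ displacements over all fronts (whose strengths sum to $\tv(u^\nu(\cdot))$) gives property~2 for $u^\nu$.

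These bounds being uniform in $\nu$, Helly's theorem together with the uniform $\L1$–Lipschitz–in–time control yields a subsequence $u^\nu\to u$ in $\Lloc1([0,T]\times[0,L])$ and a.e., and $u$ inherits properties~1--3 by lower semicontinuity together with $f^\nu\to f$, $\partial_u f^\nu\to\partial_u f$, recalling $\tv(u_o^\nu)\le\tv(u_o)$, $\tv(\ub^\nu;[0,t])\le\tv(\ub;[0,t])$ and the convergence of the corner traces. Finally, each $u^\nu$ satisfies the entropy inequalities of Definition~\ref{def:solskSG} for the flux $f^\nu$ up to an error vanishing as $\nu\to\infty$ (the standard front–tracking entropy error for piecewise linear fluxes; the boundary terms carry the correct sign because the solver of Proposition~\ref{prop:2SG} respects the boundary condition), and since that definition is stable under $\L1$ convergence by~\cite[Chapter~2, Remark~7.33]{MalekEtAlBook} and $f^\nu\to f$ uniformly on $[0,T]\times\mathcal{U}$ with $\norma{\partial_u f^\nu}\to\norma{\partial_u f}$, the limit $u$ is a weak entropy solution to~\eqref{eq:1sg}, with the three stated properties.

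The step I expect to be the main obstacle is (ii): showing that neither a boundary interaction nor the re-imposition of the boundary conditions at a flux–change time $t_k$ increases $\tv(u^\nu(t);[0,L])$ beyond the jumps of $\ub^\nu$ already accounted for, i.e. that the Glimm-type functional is non-increasing at these events (this is where a naive time splitting with restarts would fail). The presence of the two boundaries $x=0$ and $x=L$ only doubles the combinatorics and contributes the additional corner term $\modulo{u_{b,2}(0+)-u_o(L-)}$; everything else is as in Proposition~\ref{prop:2t}.
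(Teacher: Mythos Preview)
Your approach is correct but takes a genuinely different route to convergence than the paper. The paper (for Proposition~\ref{prop:2t}, whose proof is declared to carry over verbatim to the segment) builds a two--parameter approximation: a time mesh $T^i_n=iT/2^n$ defining piecewise--autonomous problems~\eqref{eq:21}, and for each $n$ an $\epsilon$--wave--front--tracking solution $u^{i,\epsilon}_n$ continued across slabs. Convergence is obtained not by Helly but by showing that $u_n$ is a \emph{Cauchy sequence} in $\C0([0,T];\L1)$, comparing $u_n$ with $u_{n+1}$ via the flux--stability estimate~\eqref{eq:22} (i.e.\ Theorem~\ref{thm:3SG} at the $\epsilon$--level); this is where the $\Wloc1\infty$--in--$t$ hypothesis on $\partial_u f$ enters quantitatively, producing the $2^{-n}$ decay~\eqref{eq:O}. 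Your single--parameter Helly argument is more self--contained (it does not invoke Theorem~\ref{thm:3SG}) but yields only subsequential convergence until uniqueness via Proposition~\ref{prop:lipdepSG} is used; the paper's route gives a rate and dovetails with the subsequent proof of Theorem~\ref{thm:3tSG}.

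Your stated worry---that restarting on each slab would feed unsummable corner terms $|u_{b,i}(t_k+)-u^\nu(t_k,\cdot)|$---is a misreading of the obstacle. The paper \emph{does} restart at the level of the limiting $u^i_n$, and the corner terms are controlled not by summing them but by the monotonicity $V^{i,\epsilon}_n\le V^{i-1,\epsilon}_n$ of the Glimm functional~\eqref{eq:Vin}, which already absorbs the boundary mismatch; your own step~(ii) uses exactly the same mechanism. So both constructions are ``careful'' in the same sense, and your concern does not distinguish them.
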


\noindent We conclude this section with the analogue to
Theorem~\ref{thm:3t}, i.e.~the stability of the solution
to~\eqref{eq:1sg} with respect to the flux.

\begin{theorem}
  \label{thm:3tSG}
  Let $f,g$ satisfy~\eqref{eq:f}. Fix
  $u_o \in (\L1 \cap \BV)([0,L];\reali)$ and
  $\ub \in (\L1 \cap \BV)([0,T];\reali^2)$. Call $u, v$ the
  solutions to the IBVP~\eqref{eq:1sg}, with flux $f$ and $g$
  respectively, constructed in Proposition~\ref{prop:2tSG}. Then, with
  $\mathcal{U} = \mathcal{U} (u_o, {\ub}_{|[0,t]})$ as
  in~\eqref{eq:19}, for all $t \in [0,T]$,
  \begin{align*}
    & \norma{u (t) - v (t)}_{\L1 ([0,L]; \reali)}
    \\
    \leq \ &
             \max \!
             \left\{1,
             \norma{\partial_u g}_{\L\infty ([0,t] \times \mathcal{U}; \reali)}\right\} \,
             \norma{\partial_u (f - g)}_{\L\infty ([0,t] \times \mathcal{U};\reali)}
    \\
    & \times \left(\tv (u_o) + \tv\left(\ub; [0,t]\right) +
      \modulo{u_{b,1} (0+) - u_o (0+)} +
      \modulo{u_{b,2} (0+) - u_o (L-)}\right) \, t.
  \end{align*}
\end{theorem}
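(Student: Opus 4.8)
The plan is to reproduce, almost verbatim, the proof of Theorem~\ref{thm:3t}, the only structural novelty being the second boundary at $x=L$: wherever the half--line argument invokes the datum $u_b$, the boundary integral at $x=0$ in the entropy inequalities, or the corner jump $\modulo{u_b(0+)-u_o(0+)}$, one now carries in parallel the pair $\ub=(u_{b,1},u_{b,2})$, both boundary integrals of Definition~\ref{def:solskSG}, and both corner jumps $\modulo{u_{b,1}(0+)-u_o(0+)}$ and $\modulo{u_{b,2}(0+)-u_o(L-)}$, exactly as already happens in Proposition~\ref{prop:2tSG}. I would first recall from Proposition~\ref{prop:2tSG} that $u$ and $v$ arise as $\L1$--limits of wave front tracking approximations $u^\nu$, $v^\nu$, built on a common time grid $0=s_0<s_1<\cdots$ of vanishing mesh by freezing the fluxes $f$ and $g$ on each slab and solving there the autonomous IBVP~\eqref{eq:1autSG} with flux $f(s_{i-1},\cdot)$, respectively $g(s_{i-1},\cdot)$, with data prescribed by the approximation at time $s_{i-1}$ and by $\ub$. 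Item~3 of Proposition~\ref{prop:2tSG} then yields, uniformly in $\nu$ and in $\tau\in[0,t]$, the bound $\tv\!\left(v^\nu(\tau)\right)\le\mathcal{V}$, where $\mathcal{V}=\tv(u_o)+\tv(\ub;[0,t])+\modulo{u_{b,1}(0+)-u_o(0+)}+\modulo{u_{b,2}(0+)-u_o(L-)}$, while item~1 confines all the values of $u^\nu$ and $v^\nu$ to $\mathcal{U}=\mathcal{U}(u_o,{\ub}_{|[0,t]})$.

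Next I would compare $u^\nu$ with $v^\nu$ directly: both are piecewise constant with finitely many fronts, both issue from the common datum $u_o$ at $\tau=0$, both are driven by the same boundary datum $\ub$, and each is ruled, on the generic slab, by its own frozen flux. Following the autonomous estimate underlying Theorem~\ref{thm:3SG} and relying on \cite[Theorem~2.9]{BressanLectureNotes}, one tracks how $\tau\mapsto\norma{u^\nu(\tau)-v^\nu(\tau)}_{\L1([0,L];\reali)}$ grows. The speed discrepancy between corresponding interior fronts of $u^\nu$ and $v^\nu$, bounded by $\norma{\partial_u(f-g)}_{\L\infty([0,t]\times\mathcal{U};\reali)}$, makes this quantity grow at rate at most $\norma{\partial_u(f-g)}_{\L\infty([0,t]\times\mathcal{U};\reali)}\,\tv(v^\nu(\tau))$, while the two boundary fans at $x=0$ and at $x=L$, produced by the flux--dependent boundary Riemann solvers fed the common datum $\ub$, add further terms governed by $\norma{\partial_u g}_{\L\infty([0,t]\times\mathcal{U};\reali)}$, the coefficient attached to $v$'s boundary in Definition~\ref{def:solskSG}, times $\norma{\partial_u(f-g)}_{\L\infty([0,t]\times\mathcal{U};\reali)}$, times boundary quantities again bounded by $\mathcal{V}$; together these yield the factor $\max\{1,\norma{\partial_u g}_{\L\infty}\}$. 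The residual flux--freezing and wave front tracking errors are $o(1)$ as $\nu\to0$ by Proposition~\ref{prop:2tSG}, this being exactly where hypothesis~\eqref{eq:f} is used. All in all, for a.e.\ $\tau\in[0,t]$,
\begin{displaymath}
  \frac{d}{d\tau}\,\norma{u^\nu(\tau)-v^\nu(\tau)}_{\L1([0,L];\reali)}
  \le
  \max\!\left\{1,\norma{\partial_u g}_{\L\infty([0,t]\times\mathcal{U};\reali)}\right\}
  \norma{\partial_u(f-g)}_{\L\infty([0,t]\times\mathcal{U};\reali)}\;\mathcal{V}+o(1).
\end{displaymath}

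Integrating this inequality on $[0,t]$, that is, summing the slab contributions, and then letting $\nu\to0$ --- so that the left side tends to $\norma{u(t)-v(t)}_{\L1([0,L];\reali)}$ by Proposition~\ref{prop:2tSG} and the right side tends to the stated bound once $\mathcal{V}$ is expanded --- gives exactly the claimed estimate.

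I expect the main obstacle to be the rigorous control of these boundary contributions. One must show that the boundary layers of $u^\nu$ and of $v^\nu$, now present at both $x=0$ and $x=L$, stay $\L1$--close at rate $\norma{\partial_u(f-g)}_{\L\infty}$ uniformly with respect to the time grid and to $\nu$, even though the boundary Riemann solvers genuinely depend on the two distinct fluxes while being fed the same datum $\ub$. This is the only point where the segment case truly departs from the half--line case of Theorem~\ref{thm:3t}, and it is settled by replicating at $x=L$, with the obvious changes of orientation, the estimates already carried out at $x=0$.
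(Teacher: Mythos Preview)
Your approach is correct and is exactly what the paper does: it explicitly states that the proofs for the segment are omitted because they are ``entirely analogous'' to those in \S\ref{subs:NAHL}, so the proof of Theorem~\ref{thm:3tSG} is literally the proof of Theorem~\ref{thm:3t} with the second boundary at $x=L$ carried along in every estimate, precisely as you describe.

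One point in your explanation is off, though it does not break the argument. The factor $\max\{1,\norma{\partial_u g}_{\L\infty}\}$ does \emph{not} originate from the boundary fans contributing qualitatively different terms than the interior fronts. In the autonomous estimate (the segment analogue of~\eqref{eq:22}, underlying Theorem~\ref{thm:3SG}), the boundary Riemann problems at $x=0$ and $x=L$ are treated exactly like the interior ones: each contributes $\norma{D(f-g)}_{\L\infty}\,|w^r-w^\ell|$ to the integrand in~\cite[Theorem~2.9]{BressanLectureNotes}, and summing over all of them gives $\norma{D(f-g)}_{\L\infty}\cdot\mathcal{V}$. The factor $\max\{1,\norma{\partial_u g}_{\L\infty}\}$ sits \emph{outside} the integral: it is the Lipschitz constant of the $g$--semigroup on the product domain $\mathcal{D}^\epsilon$ equipped with the max norm, coming from Proposition~\ref{prop:lipdepSG} (constant $1$ in the initial datum, constant $\norma{\partial_u g}_{\L\infty}$ in the boundary data), exactly as in~\eqref{eq:quellacheviene}. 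It is not ``the coefficient attached to $v$'s boundary in Definition~\ref{def:solskSG}''. Since you ultimately invoke Theorem~\ref{thm:3SG} on each frozen-flux slab and that result already carries the correct constant, your recursion and final bound are unaffected; but the heuristic you give for it would mislead if one tried to re-derive the autonomous estimate from scratch.
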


\section{Technical Proofs}
\label{sec:Tech}

We distinguish between \emph{classical entropy-entropy flux pair} and
\emph{boundary entropy-entropy flux pair}. In similar settings, the
former notion, in the time independent case, is given
in~\cite[Paragraph~7.4]{DafermosBook} or~\cite[Chapter~2,
Definition~3.22]{MalekEtAlBook}, while for the latter we refer
to~\cite{OttoPhD, OttoCR}, see also~\cite[Chapter~2,
Definition~7.1]{MalekEtAlBook}, \cite[Definition~2]{Martin}
and~\cite[Definition~2]{Vovelle}. We provide below the explicit
definitions in the case of interest here, where $f = f(t,u)$.

\begin{definition}
  \label{def:ceef}
  The pair
  $(\eta, q) \in \C1 (\reali; \reali) \times \C1 ([0,T] \times
  \reali; \reali)$
  is called a \emph{classical entropy-entropy flux pair} for the flux
  $f \in \C1 ([0,T] \times \reali; \reali)$ if:
  \begin{enumerate}
  \item $\eta$ is convex;
  \item for all $t \in [0,T]$ and all $u \in \reali$,
    $\partial_u q (t,u) = \eta' (u) \; \partial_u f(t,u)$.
  \end{enumerate}
\end{definition}

\begin{definition}
  \label{def:beef}
  The pair
  $(H,Q) \in \C1(\reali^2;\reali) \times \C1([0,T] \times \reali^2;
  \reali)$
  is called a \emph{boundary entropy-entropy flux pair} for the flux
  $f \in \C1 ([0,T] \times \reali; \reali)$ if:
  \begin{enumerate}
  \item for all $w \in \reali$, the function $u \mapsto H(u,w)$ is
    convex;
  \item for all $t \in [0,T]$ and all $u,w \in \reali$,
    $\partial_u Q(t,u,w) = \partial_u H(u,w) \; \partial_u
    f(t,u)$;
  \item for all $t \in [0,T]$ and all $w \in \reali$, $H(w,w)=0$,
    $Q(t,w,w)=0$ and $\partial_u H(w,w)=0$.
  \end{enumerate}
\end{definition}

\noindent Consequences of Definition~\ref{def:solsk} are
collected in the following lemmas, whose proofs directly follow
from~\cite[Lemma~1 and Remark~3]{Vovelle}, see also~\cite[Lemma~3, Lemma~4
and Lemma~16]{Martin}.

\begin{lemma}
  \label{lem:solint}
  If $u \in \L\infty([0,T] \times \reali_+;\reali)$ is a weak entropy
  solution to~\eqref{eq:1} in the sense of Definition~\ref{def:solsk},
  then, for all classical entropy-entropy flux pairs $(\eta,q)$, for
  all $\phi \in \Cc1(\reali\times \pint\reali_+;\reali_+)$,
  \begin{equation}
    \label{eq:solint}
    \begin{aligned}
      \int_0^T\int_{\reali_+} \left\{\eta\left(u(t,x)\right)
        \, \partial_t \phi(t,x) + q\left(t,u(t,x)\right) \, \partial_x
        \phi (t,x)\right\} \d{x} \d{t} &
      \\
      + \int_{\reali_+} \eta\left(u_o(x)\right) \, \phi(0,x) \d{x} -
      \int_{\reali_+} \eta\left(u(T,x)\right) \, \phi(T,x) \d{x} &
      \geq 0.
    \end{aligned}
  \end{equation}
  In particular, for all
  $\phi \in \Cc1(\reali\times \pint\reali_+;\reali_+)$ and for all
  $k \in \reali$,
  \begin{equation}
    \label{eq:solKr}
    \begin{aligned}
      \int_0^T \!\! \int_{\reali_+}\!\!  & \left\{\modulo{u(t,x)-k}
        \, \partial_t \phi(t,x) \right.
      \\
      & \left.  +
        \sgn\left(u(t,x)-k\right)\left(f\left(t,u(t,x)\right) - f(t,k)
        \right) \partial_x \phi (t,x)\right\} \d{x} \d{t}
      \\
      + \int_{\reali_+} & \modulo{u_o(x)-k} \phi(0,x) \d{x} -
      \int_{\reali_+} \modulo{u(T,x)-k} \phi(T,x) \d{x} \geq 0.
    \end{aligned}
  \end{equation}
\end{lemma}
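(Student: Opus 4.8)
The plan is to derive~\eqref{eq:solint} from the two semi--Kru\v{z}kov inequalities~\eqref{eq:ei1}--\eqref{eq:ei2} of Definition~\ref{def:solsk} by a superposition argument over the threshold $k$. The decisive preliminary remark is that the test functions here are supported in the \emph{open} half--line: any $\phi\in\Cc1(\reali\times\pint\reali_+;\reali_+)$ also belongs to $\Cc1(\reali\times\reali;\reali_+)$ and satisfies $\phi(t,0)=0$, so that for such a $\phi$ every boundary integral in~\eqref{eq:ei1} and~\eqref{eq:ei2} drops out. Hence, for each $k\in\reali$, the first two lines of~\eqref{eq:ei1} and of~\eqref{eq:ei2} are already nonnegative; denote these ``interior'' inequalities by $(I_k^+)$ and $(I_k^-)$.

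Next I would express a general classical entropy as a superposition of the semi--Kru\v{z}kov entropies~\eqref{eq:sken}. Since $\eta\in\C1(\reali;\reali)$ is convex, $\eta'$ is continuous and nondecreasing; let $\mu$ denote the nonnegative Radon measure that is the distributional derivative of $\eta'$. Two integrations by parts give, for every $u$,
\[
  \eta(u)=\eta(0)+\eta'(0)\,u+\int_{[0,+\infty)}\eta_k^+(u)\,\d{\mu}(k)+\int_{(-\infty,0)}\eta_k^-(u)\,\d{\mu}(k) ,
\]
and, setting
\[
  q_0(t,u)=\eta'(0)\,f(t,u)+\int_{[0,+\infty)}\Phi_k^+(t,u)\,\d{\mu}(k)+\int_{(-\infty,0)}\Phi_k^-(t,u)\,\d{\mu}(k) ,
\]
one checks, differentiating in $u$ and using $\partial_u\Phi_k^\pm(t,u)=(\eta_k^\pm)'(u)\,\partial_u f(t,u)$ for a.e.\ $u$, that $\partial_u q_0=\eta'(u)\,\partial_u f$; hence the flux $q$ of~\eqref{eq:solint} equals $q_0$ up to an additive function of $t$ (cf.\ Definition~\ref{def:ceef}), which is harmless in~\eqref{eq:solint} because $\int_{\reali_+}\partial_x\phi\,\d{x}=0$. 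I would then integrate $(I_k^+)$ against $\d{\mu}$ over $[0,+\infty)$ and $(I_k^-)$ against $\d{\mu}$ over $(-\infty,0)$, and add. The interchange of the $k$-integral with the integration in $(t,x)$ is legitimate by Fubini--Tonelli, since the integrands of $(I_k^\pm)$ vanish identically once $\modulo{k}$ exceeds the essential supremum of $\modulo{u}$, $\modulo{u_o}$ and $\modulo{u_b}$, so that only a bounded part of $\mu$, of finite mass, is involved. As $\mu\geq0$, the outcome is $\geq0$ and, by the two displays above, it coincides with the left--hand side of~\eqref{eq:solint} with $\eta$ and $q$ there replaced by $\eta(u)-\eta(0)-\eta'(0)\,u$ and $q_0(t,u)-\eta'(0)\,f(t,u)$.

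It remains to restore the affine part $\eta(0)+\eta'(0)\,u$. For this I would first verify that $u$ solves $\partial_t u+\partial_x f(t,u)=0$ distributionally against functions in $\Cc1(\reali\times\pint\reali_+;\reali_+)$: letting $k\to-\infty$ in~\eqref{eq:ei1} and $k\to+\infty$ in~\eqref{eq:ei2}, for $\modulo{k}$ large the semi--Kru\v{z}kov terms become affine in $u$, the boundary terms vanish, and the remaining $k$-dependent contributions cancel out upon using $\int_{\reali_+}\partial_x\phi\,\d{x}=0$ and $\int_0^T\!\!\int_{\reali_+}\partial_t\phi\,\d{x}\,\d{t}=\int_{\reali_+}\bigl(\phi(T,\cdot)-\phi(0,\cdot)\bigr)\d{x}$; this gives the weak formulation with both ``$\geq$'' and ``$\leq$'', hence as an identity. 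Consequently the affine entropy $\eta(0)+\eta'(0)\,u$, taken with the flux $\eta'(0)\,f(t,u)$, contributes exactly $0$ to the left--hand side of~\eqref{eq:solint} (the constant part by a direct computation, the linear part by the weak formulation just obtained). Summing this $0$ with the inequality of the preceding step reconstitutes the left--hand side of~\eqref{eq:solint}, now with the genuine entropy $\eta$ and the flux $q_0$, which proves~\eqref{eq:solint}. Finally, \eqref{eq:solKr} is the special case of a single fixed $k$: from $\modulo{u-k}=\eta_k^+(u)+\eta_k^-(u)$ and $\sgn(u-k)\bigl(f(t,u)-f(t,k)\bigr)=\Phi_k^+(t,u)+\Phi_k^-(t,u)$ it follows at once by adding $(I_k^+)$ and $(I_k^-)$ (alternatively, by approximating $\modulo{\cdot-k}$ with smooth convex entropies and passing to the limit in~\eqref{eq:solint}).

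The step I expect to demand the most care is the superposition: establishing the two displayed identities --- in particular that the $u$-derivative of the integral of the $\Phi_k^\pm$ against $\mu$ reproduces $\bigl(\eta'(u)-\eta'(0)\bigr)\partial_u f(t,u)$, while bearing in mind that $\eta''$ is in general only a measure --- and justifying the Fubini--Tonelli interchange. The remaining ingredients, namely discarding the boundary terms via $\phi(t,0)=0$, neutralizing the flux ambiguity via $\int_{\reali_+}\partial_x\phi\,\d{x}=0$, and the two passages to the limit in $k$, are routine.
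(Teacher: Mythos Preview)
Your argument is correct and is precisely the standard superposition argument that the paper defers to the cited references (\cite[Lemma~1 and Remark~3]{Vovelle}, \cite[Lemma~3, Lemma~4 and Lemma~16]{Martin}) rather than reproducing in full. The paper gives no self-contained proof of this lemma, so your write-up in fact supplies more detail than the paper itself; the key observations you isolate---that $\phi(t,0)=0$ kills the boundary terms, that a general $\C1$ convex entropy decomposes as an affine part plus a $\mu$-average of semi--Kru\v{z}kov entropies, and that~\eqref{eq:solKr} is just $(I_k^+)+(I_k^-)$---are exactly the ingredients behind the cited lemmas.
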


\begin{lemma}
  \label{lem:bc}
  If $u \in \L\infty([0,T] \times \reali_+;\reali)$ is a weak entropy
  solution to~\eqref{eq:1} in the sense of Definition~\ref{def:solsk},
  then, for all boundary entropy-entropy flux pair $(H,Q)$ and for all
  $\beta \in \L1(\reali;\reali)$ with $\beta \geq 0$ a.e.,
  \begin{equation}
    \label{eq:bc}
    \esslim_{s \to 0^+}
    \int_0^T Q\left(t,u(t,s),u_b(t)\right) \, \beta(t) \d{t}
    \leq 0.
  \end{equation}
  Moreover, if $u$ admits a trace $u (t, 0+)$ at $x=0$ for
  a.e.~$t \in [0,T]$, \eqref{eq:bc} is equivalent to
  \begin{equation}
    \label{eq:bctr}
    \int_0^T Q\left(t,u(t,0+),u_b(t)\right) \, \beta(t) \d{t} \leq 0.
  \end{equation}
\end{lemma}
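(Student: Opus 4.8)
The plan is to follow the arguments of~\cite[Lemma~1 and Remark~3]{Vovelle}, which I sketch here. First I would restrict the test functions in~\eqref{eq:ei1}--\eqref{eq:ei2} to $\phi \in \Cc1(]0,T[\times\pint\reali_+;\reali_+)$: then all the initial, terminal and boundary terms vanish and $u$ satisfies the interior semi--Kru\v{z}kov inequalities $\partial_t \eta_k^\pm (u) + \partial_x \Phi_k^\pm (t,u) \leq 0$ in $\mathcal{D}'(]0,T[\times\pint\reali_+)$, for every $k$. Testing these with $\phi(t,x) = \beta(t)\,\zeta(x)$, $\beta \in \Cc1(]0,T[;\reali_+)$ and $\zeta \in \Cc1(\pint\reali_+;\reali_+)$, shows that the map $x \mapsto \int_0^T \Phi_k^\pm(t,u(t,x))\,\beta(t)\d{t}$ has distributional $x$--derivative bounded above by $\norma{\eta_k^\pm(u)}_{\L\infty}\norma{\beta'}_{\L1}$; hence, up to a null set, it is the sum of a non--increasing and a Lipschitz function, so its essential limit as $x \to 0^+$ exists in $\left]-\infty,+\infty\right]$.

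Next I would insert in~\eqref{eq:ei1} the compactly supported Lipschitz (hence admissible by density) test function $\phi(t,x) = \beta(t)\,\max\{1-x/s,0\}$, with $\beta \in \Cc1(]0,T[;\reali_+)$. The initial and terminal terms vanish since $\beta(0)=\beta(T)=0$, the $\partial_t\phi$--term is $O(s)$ because $\eta_k^+(u) \in \L\infty$ while the $x$--domain has measure $s$, and the $\partial_x\phi$--term equals $-\tfrac1s\int_0^s\!\int_0^T \Phi_k^+(t,u(t,x))\,\beta(t)\d{t}\d{x}$. Letting $s \to 0^+$ and using the previous step to identify this Cesàro average with the essential limit, I obtain, for every $k$ and — by density, since the integrand is bounded uniformly in $(t,s)$ — every $\beta \in \L1(\reali;\reali)$ with $\beta \geq 0$,
\[
  \esslim_{s\to 0^+}\int_0^T \Phi_k^+\left(t,u(t,s)\right)\beta(t)\d{t}
  \leq
  \norma{\partial_u f}_{\L\infty([0,T]\times\mathcal{U};\reali)}\int_0^T \eta_k^+\left(u_b(t)\right)\beta(t)\d{t},
\]
together with the symmetric estimate for $\Phi_k^-$, $\eta_k^-$.

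Then I would promote this to an arbitrary boundary entropy--entropy flux pair $(H,Q)$ as in Definition~\ref{def:beef}. For each $w$ the map $u \mapsto H(u,w)$ is convex with minimum $0$ at $u=w$, so $\mu_w = \partial_u^2 H(\cdot,w)$ is a nonnegative Radon measure; integrating, and using $Q(t,w,w)=0$ and $\partial_u Q = \partial_u H\,\partial_u f$, one gets the superpositions
\[
  H(u,w) = \int_{k>w}\eta_k^+(u)\,\mu_w(\d{k}) + \int_{k<w}\eta_k^-(u)\,\mu_w(\d{k}), \qquad
  Q(t,u,w) = \int_{k>w}\Phi_k^+(t,u)\,\mu_w(\d{k}) + \int_{k<w}\Phi_k^-(t,u)\,\mu_w(\d{k}).
\]
Taking $w = u_b(t)$, $u = u(t,s)$, multiplying by $\beta(t)\geq 0$, integrating in $t$ and exchanging the order of integration, each $k$--slice is of the type controlled above, with admissible weight built from $\beta(t)$ and the density of $\mu_{u_b(t)}$; crucially the corresponding source term vanishes, because $\eta_k^+(u_b(t))=0$ on $\{u_b(t)<k\}$ and $\eta_k^-(u_b(t))=0$ on $\{u_b(t)>k\}$. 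Since $\Phi_k^\pm(t,u(t,s))$ vanishes for $k$ outside a fixed bounded interval, a reverse Fatou argument in $k$ then yields
\[
  \esslim_{s\to 0^+}\int_0^T Q\left(t,u(t,s),u_b(t)\right)\beta(t)\d{t} \leq 0,
\]
which is~\eqref{eq:bc}; if $H$ is merely $\C1$ one first approximates it by smooth convex boundary entropies. Finally, when $u(t,0+)$ exists for a.e.\ $t$, then $u(t,s) \to u(t,0+)$ as $s\to 0^+$ for a.e.\ $t$, and by continuity of $Q$ and dominated convergence (the integrand is dominated by $\norma{Q}_{\L\infty}\beta \in \L1$) the essential limit in~\eqref{eq:bc} is an honest limit equal to $\int_0^T Q(t,u(t,0+),u_b(t))\beta(t)\d{t}$; this gives the equivalence of~\eqref{eq:bc} and~\eqref{eq:bctr}.

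The main obstacle is the second step, and in particular the one--sided derivative bound underlying it: the boundary--layer test function only produces a Cesàro average of the boundary flux, and identifying it with the genuine essential limit of $\int_0^T \Phi_k^\pm(t,u(t,s))\,\beta(t)\d{t}$ rests essentially on the interior entropy inequality, exactly as in~\cite[Remark~3]{Vovelle}. The remaining bookkeeping — the superposition of $(H,Q)$ over semi--Kru\v{z}kov pairs and passing the estimate through Fubini and Fatou — is routine.
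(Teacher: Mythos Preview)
Your proposal is correct and takes essentially the same approach as the paper: the paper does not give an independent proof of this lemma but states that it ``directly follow[s] from~\cite[Lemma~1 and Remark~3]{Vovelle}'' (see also~\cite{Martin}), and your sketch is precisely an unpacking of those references --- interior semi--Kru\v{z}kov inequalities, existence of the one--sided boundary limit of the flux via the monotonicity coming from the entropy inequality, the boundary--layer test function $\beta(t)\max\{1-x/s,0\}$, and the superposition of a general boundary entropy pair over the semi--Kru\v{z}kov family. There is nothing to add beyond noting that the existence of the essential limit for the full $\int_0^T Q(t,u(t,s),u_b(t))\beta(t)\d{t}$ (not just for each $\Phi_k^\pm$) is also part of what has to be checked, and is handled in the cited works by the same one--sided $\BV$--type argument.
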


We now extend part of~\cite[Chapter~2, Lemma~7.24]{MalekEtAlBook} to the
time dependent case.

\begin{lemma}
  \label{lem:equiv}
  Let $u_b \in \L\infty ([0,T]; \reali)$ and let
  $u \in \L\infty ([0,T] \times \reali; \reali)$ admit a trace
  $u (t, 0+)$ at $x=0$ for a.e.~$t \in [0,T]$. If~\eqref{eq:bctr}
  holds, then for a.e.~$t \in [0,T]$ and for all
  $k \in \mathcal{I} \left(u (t, 0+), u_b (t)\right)$ as
  in~\eqref{eq:5},
  \begin{equation}
    \label{eq:4}
    \sgn \left(u (t, 0+) - u_b (t)\right)
    \left(
      f\left(t, u (t, 0+)\right) - f (t,k)
    \right)
    \leq 0 \,.
  \end{equation}
\end{lemma}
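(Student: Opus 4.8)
The statement is a standard pointwise characterization of the boundary entropy condition (the Bardos--LeRoux--Nédélec form), here in the non autonomous setting. The plan is to start from~\eqref{eq:bctr} and make a careful choice of the boundary entropy-entropy flux pair $(H,Q)$, then let $H$ degenerate so as to isolate the one-sided inequality~\eqref{eq:4}. Concretely, for a parameter $k \in \reali$ I would work with a regularized version of the classical boundary entropy $H(u,w) = \modulo{u-k}$ truncated near $w$; more precisely I would use pairs of the form $H_\delta(u,w)$ that are convex in $u$, vanish together with $\partial_u H_\delta$ at $u=w$, and converge as $\delta \to 0$ to a function whose derivative is $\sgn(u-k)$ on the region between $w$ and $k$ and $0$ outside $\mathcal{I}(w,k)$. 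The corresponding flux is $Q_\delta(t,u,w) = \int_w^u \partial_u H_\delta(\sigma,w)\,\partial_u f(t,\sigma)\d{\sigma}$, which in the limit becomes $\sgn(u-k)\bigl(f(t,u) - f(t, \text{proj}_{\mathcal{I}(w,k)} u)\bigr)$-type expressions; when $k \in \mathcal{I}(u(t,0+), u_b(t))$ and one evaluates at $u = u(t,0+)$, $w = u_b(t)$, this collapses exactly to the left-hand side of~\eqref{eq:4}.

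The key steps, in order, are: first, fix $k$ and construct the family $(H_\delta, Q_\delta)$ of boundary entropy-entropy flux pairs satisfying Definition~\ref{def:beef}, with explicit formulas and the required $\delta \to 0$ limits; second, insert each $(H_\delta, Q_\delta)$ into~\eqref{eq:bctr} with an arbitrary nonnegative $\beta \in \L1([0,T];\reali)$, obtaining $\int_0^T Q_\delta\bigl(t, u(t,0+), u_b(t)\bigr)\,\beta(t)\d{t} \leq 0$; third, pass to the limit $\delta \to 0$ under the integral sign — this is legitimate by dominated convergence since $u(t,0+)$ and $u_b(t)$ are in $\L\infty$, $f \in \C1$ so $\partial_u f$ is bounded on the relevant compact range, and hence the $Q_\delta$ are uniformly bounded — to get $\int_0^T Q_0\bigl(t,u(t,0+),u_b(t)\bigr)\,\beta(t)\d{t} \leq 0$ with $Q_0$ the limiting flux; fourth, since $\beta \geq 0$ is arbitrary in $\L1$, conclude that $Q_0\bigl(t,u(t,0+),u_b(t)\bigr) \leq 0$ for a.e.~$t \in [0,T]$; fifth, unwind the definition of $Q_0$: for $k \in \mathcal{I}(u(t,0+), u_b(t))$ one has, by the sign bookkeeping in the construction, $Q_0\bigl(t, u(t,0+), u_b(t)\bigr) = \sgn\bigl(u(t,0+) - u_b(t)\bigr)\bigl(f(t,u(t,0+)) - f(t,k)\bigr)$, which yields~\eqref{eq:4}.

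A cleaner variant of the third--fifth steps avoids explicit regularization: take directly $H(u,w) = \modulo{u - \kappa(u,w)}$-type expressions built from the classical Kružkov pair but ``cut'' at $w$, as in~\cite[Chapter~2, Lemma~7.24]{MalekEtAlBook}; the non autonomous feature only enters through the $t$-dependence of $f$ in clause~(2) of Definition~\ref{def:beef}, which is harmless since all the convexity and vanishing conditions are in the $u$ variable and hold uniformly in $t$. I would then quote the autonomous computation essentially verbatim, noting that $\partial_u f(t,\cdot)$ replaces $f'(\cdot)$ throughout and that the boundedness needed for the limit passage is supplied by $f \in \C1([0,T]\times\reali;\reali)$ restricted to $[0,T] \times \mathcal{U}$.

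The main obstacle is the last step: correctly identifying the limiting flux $Q_0$ and checking that, precisely on the interval $k \in \mathcal{I}(u(t,0+), u_b(t))$, the various cases ($u(t,0+) \le k \le u_b(t)$ versus $u_b(t) \le k \le u(t,0+)$, and the degenerate equalities) all reduce to the single formula $\sgn(u(t,0+) - u_b(t))\,(f(t,u(t,0+)) - f(t,k))$. This requires a small but careful case analysis, and one must be attentive that the set of ``good'' times $t$ is of full measure simultaneously for a countable dense set of $k$'s and then extended to all admissible $k$ by continuity of $f(t,\cdot)$ — the standard Kružkov density argument, transplanted to the boundary.
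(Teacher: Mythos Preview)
Your proposal is correct and follows essentially the same route as the paper: construct, for each fixed $k$, a family of boundary entropy--entropy flux pairs that regularize the ``distance to $\mathcal{I}(w,k)$'' entropy, apply~\eqref{eq:bctr}, pass to the limit, and use the arbitrariness of $\beta \geq 0$ to obtain the pointwise inequality; the paper writes the limiting pair explicitly as $\Delta^k(u,w) = \min_{z \in \mathcal{I}(w,k)} \modulo{u-z}$ and a piecewise formula $\mathcal{F}^k(t,u,w)$, with regularizations $H_n^k(u,w) = \bigl((\Delta^k(u,w))^2 + n^{-2}\bigr)^{1/2} - n^{-1}$ and the corresponding $Q_n^k$. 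Your remark about the density argument in $k$ (to interchange ``for a.e.~$t$'' and ``for all $k$'') is a point the paper leaves implicit.
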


\begin{proof}
  For all $k \in \reali$ and for $n \in \naturali \setminus\{0\}$,
  define the maps
  \begin{equation}
    \label{eq:11}
    \begin{aligned}
      \Delta^k (u,w) = \ & \min_{z \in \mathcal{I} (w,k)} \modulo{u -
        z}
      \\
      \mathcal{F}^k (t, u, w) = \ &
      \begin{cases}
        f (t,w) - f (t,u) & \mbox{for } u \leq w \leq k
        \\
        0 & \mbox{for } w \leq u \leq k
        \\
        f (t,u) - f (t,k) & \mbox{for } w \leq k \leq u
        \\
        f (t,k) - f (t,u) & \mbox{for } u \leq k \leq w
        \\
        0 & \mbox{for } k \leq u \leq w
        \\
        f (t,u) - f (t,w) & \mbox{for } k \leq w \leq u
      \end{cases}
      \\
      H_n^k (u,w) = \ & \left( \left( \Delta^k (u,w) \right)^2 +
        \dfrac{1}{n^2} \right)^{1/2} - \dfrac{1}{n} \\
      Q_n^k (t,u,w) = \ & \int_w^u \partial_u H_n^k (z,w)
      \; \partial_u f (t,z) \d{z} .
    \end{aligned}
  \end{equation}
  Clearly, for all $k \in \reali$, the sequence of boundary
  entropy-entropy flux pairs $(H_n^k, Q_n^k)$ converges uniformly to
  $(\Delta^k, \mathcal{F}^k)$ as $n \to
  +\infty$.
  Applying~\eqref{eq:bctr} with $Q$ replaced by $Q_n^k$, in the limit
  $n \to +\infty$ yields that for all $k \in \reali$ and for all
  $\beta \in \L1 (\reali; \reali)$ with $\beta \geq 0$ a.e.,
  \begin{eqnarray}
    \nonumber
    \int_0^T
    \mathcal{F}^k \left(t,u(t,0+),u_b(t)\right) \, \beta(t) \d{t}
    & \leq
    & 0
    \\
    \label{eq:15}
    \mathcal{F}^k \left(t,u(t,0+),u_b(t)\right)
    & \leq
    & 0
      \qquad \mbox{ for a.e. } t \in [0,T] \,.
  \end{eqnarray}
  Choose now $k \in \mathcal{I}\left(u (t,0+) , u_b (t)\right)$ so
  that, by~\eqref{eq:11}, the bound~\eqref{eq:15} ensures~\eqref{eq:4}.
\end{proof}

\begin{proofof}{Proposition~\ref{prop:lipdep}}
This proof closely follows that of~\cite[Theorem~4.3]{bordo}, but
using the doubling of variables method as in~\cite[Lemma~17]{Martin},
which is consistent with the present Definition~\ref{def:solsk}.  Key
points are the choice of an appropriate test function and the use of
Lemma~\ref{lem:bc} and Lemma~\ref{lem:equiv}.

Note that here there is no source term, the flux $f$ does not depend
on the space variable and we are dealing with $\reali_+$ instead of a
bounded domain $\Omega \subseteq \reali^n$. A careful checking of the
proof in~\cite{bordo} shows that the present assumptions on $f$ are
sufficient.
\end{proofof}

\subsection{Proofs related to the Autonomous IBVP on the Half--Line}
\label{subs:AHL}

\begin{proofof}{Proposition~\ref{prop:2}}
  For $\epsilon>0$, introduce the set
  $\PC (\reali_+; \epsilon\interi)$ of maps $u$ of the form
  $u = \sum_{\alpha = 1}^N u_\alpha \, \caratt{I_\alpha}$, where
  $N \in \naturali$, $u_\alpha \in \epsilon \interi$ and $I_\alpha$ is
  a real interval for all $\alpha = 1, \ldots, N$.
  $\PLC (\reali; \reali)$ is the set of real valued piecewise linear
  and continuous functions defined on $\reali$.

  \paragraph{A.1)~Construction of $\epsilon$--approximate solutions.}
  Following~\cite[Chapter~6]{BressanLectureNotes}, for any positive
  $\epsilon$ introduce the following approximations:
  \begin{equation}
    \label{eq:app}
    \begin{array}{rcl@{\mbox{ such that }}l}
      u_o^\epsilon & \in & \PC (\reali_+; \epsilon\interi)
      &
        \left\{
        \begin{array}{l}
          \norma{u_o^\epsilon}_{\L\infty (\reali_+;\reali)}
          \leq
          \norma{u_o}_{\L\infty (\reali_+;\reali)}
          \\
          \tv (u_o^\epsilon) \leq\tv (u_o)
          \\
          \lim_{\epsilon \to 0} \norma{u_o^\epsilon-u_o}_{\L1 (\reali_+; \reali)} =0
        \end{array}
      \right.
      \\
      u_b^\epsilon & \in & \PC ([0,T]; \epsilon\interi)
      & \left\{
        \begin{array}{l}
          \norma{u_b^\epsilon}_{\L\infty ([0,t];\reali)}
          \leq
          \norma{u_b}_{\L\infty ([0,t];\reali)}
          \quad \forall \, t \in [0,T]
          \\
          \tv (u_b^\epsilon; [0,t]) \leq \tv (u_b; [0,t])
          \quad \forall \, t \in [0,T]
          \\
          \lim_{\epsilon \to 0} \norma{u_b^\epsilon-u_b}_{\L1 ([0,T]; \reali)} =0
          \\
          \modulo{u^\epsilon_b(0+) - u^\epsilon_o(0+)} \leq \modulo{u_b(0+) - u_o(0+)}
        \end{array}
      \right.
      \\
      f^\epsilon & \in & \PLC (\reali; \reali)
      &
        \left\{
        \begin{array}{l}
          f^\epsilon (u) = f (u)
          \mbox{ for all } u \in \epsilon\interi
          \\
          f^\epsilon_{\left|]k\epsilon,(k+1)\epsilon[\right.}
          \mbox{ is an affine function for all } k \in \interi
          \\
          \norma{D f^\epsilon}_{\L\infty(\mathcal{U};\reali)}
          \leq
          \norma{f'}_{\L\infty(\mathcal{U};\reali)} \,.
        \end{array}
      \right.
    \end{array}
  \end{equation}

  \noindent We approximate the solution to the original
  IBVP~\eqref{eq:1aut} with exact solutions $u_\epsilon$ to the
  $\epsilon$--approximate~IBVPs
  \begin{equation}
    \label{eq:6}
    \left\{
      \begin{array}{l@{\qquad\qquad}r@{\,}c@{\,}l}
        \partial_t u^\epsilon + \partial_x f^\epsilon (u^\epsilon) = 0
        & (t,x)
        & \in
        & [0,T] \times \reali_+
        \\
        u^\epsilon (0,x) = u_o^\epsilon (x)
        & x
        & \in
        & \reali_+
        \\
        u^\epsilon (t, 0) = u_b^\epsilon (t)
        & t
        & \in
        & [0,T] \,.
      \end{array}
    \right.
  \end{equation}
  At the initial time $t=0$, solving~\eqref{eq:6} for $x>0$ amounts to
  glue the solutions to the Riemann problems at the points of jump in
  $u_o^\epsilon$, see~\cite[\S~6.1]{BressanLectureNotes}. A local
  solution at $(0,0)$ is obtained by restricting the solution to the
  Riemann problem for $f^\epsilon$ with left and right state
  $u_b^\epsilon(0)$ and $u_o^\epsilon(0)$ respectively,
  see~\cite[Example~C]{AmadoriColombo97}. Recall
  from~\cite[Chapter~6]{BressanLectureNotes} that, with the above
  choice of $f^\epsilon$, the solutions to Riemann problems with data
  in $\epsilon\interi$ still take values in the set $\epsilon\interi$.

  We thus have a piecewise constant solution $u^\epsilon$
  to~\eqref{eq:6} defined for $t>0$ sufficiently small. This solution
  can be prolonged up to the first \emph{time of interaction} $t_1>0$
  at which one of the following events takes place:
  \begin{enumerate}[(i)]
    \setlength{\itemsep}{0pt} \setlength{\parskip}{0pt}
  \item two or more lines of discontinuity hit each other;
  \item one wave hits the boundary $x=0$;
  \item the value of the boundary condition $u_b^\epsilon$ changes.
  \end{enumerate}
  In case~(i), it is possible to extend the solution beyond $t_1$ by
  solving the new Riemann problems generated by the interactions, as
  in~\cite[\S~6.1]{BressanLectureNotes}. In cases~(ii) and~(iii), the
  extension beyond $t_1$ is achieved by restricting to $\reali_+$ the
  solution to the Riemann problem with left state
  $u_b^\epsilon (t_1+)$ and right state $u^\epsilon (t_1, 0+)$. The
  solution is then prolonged up to the next time of interaction
  $t_2> t_1$, and so on.

  Note that, by construction, waves in $u^\epsilon$ satisfy both
  Rankine--Hugoniot condition~\cite[Formula~(4.3.5)]{DafermosBook} and
  Oleinik entropy condition~\cite[Formula~(8.4.3)]{DafermosBook}, in
  the sense that, whenever two states $u^\ell$ and $u^r$ in
  $u^\epsilon$ are separated by a wave propagating with speed
  $\lambda$, we have
  \begin{equation}
    \label{eq:17}
    \lambda = \frac{f^\epsilon (u^\ell) - f^\epsilon (u^r)}{u^\ell
      - u^r}
    \mbox{ and }
    \frac{f^\epsilon (u^r) - f^\epsilon (k)}{u^r - k} \leq \lambda
    \leq \frac{f^\epsilon (k) - f^\epsilon (u^\ell)}{k - u^\ell}
    \quad \forall k \in\mathcal{I} (u^\ell, u^r) \,.
  \end{equation}
  Moreover, the above conditions~\eqref{eq:17} impose that, whenever
  $u^\epsilon (t, 0+) = u^r$, we have that
  \begin{equation}
    \label{eq:18}
    \mbox{if } u^r \neq u_b^\epsilon (t)
    \quad \mbox{ then } \quad
    \dfrac{f^\epsilon (u^r) - f^\epsilon (k)}{u^r - k}
    \leq
    0
    \quad
    \forall k \in \mathcal{I} (u_b^\epsilon (t), u^r) \,.
  \end{equation}

  \paragraph{A.2)~Wave Front Tracking Solutions are Weak Entropy
    Solutions.}

  By standard arguments, it is sufficient to verify~\eqref{eq:ei1}
  and~\eqref{eq:ei2} in the following two cases:

  \smallskip

  \noindent%
  \textbf{1.} The support of the (positive) test function $\phi$ is
  contained in
  $[t_1, t_2] \times [x_1,x_2] \subset \left]0,T\right[ \times
  \pint{\reali}_+$
  and here the wave front tracking solution $u^\epsilon$ attains only
  the two values $u^\ell$ and $u^r$, separated by a wave with speed
  $\lambda = \frac{x_2-x_1}{t_2-t_1}$.

  \smallskip

  \noindent%
  \textbf{2.} The support of the (positive) test function $\phi$ is
  contained in $[t_1, t_2] \times [x_1, x_2]$ with $x_1 < 0 < x_2$,
  the boundary data satisfies $u_b^\epsilon (t) = u^\ell$ for
  $t \in [t_1, t_2]$ and $u^\epsilon (t,x) = u^r$ for
  $(t,x) \in [t_1, t_2] \times \left]0, x_2\right]$.

  \smallskip

  The other cases, that of a single wave with negative speed,
  of interacting waves, of waves interacting with the boundary and of
  the boundary datum changing value, can be recovered through
  manipulations of the test functions and immediate modifications
  of~\textbf{1.} and~\textbf{2}.

  \smallskip

  \noindent%
  \textbf{1.} Assume $k < u^\ell < u^r$. Then, direct computations
  show that~\eqref{eq:ei1} is equivalent to
  \begin{equation}
    \label{eq:caso1}
    \left[\lambda (u^r - u^\ell) - \left(f^\epsilon (u^r) - f^\epsilon (u^\ell)\right)\right]
    \int_{t_1}^{t_2} \phi \left(t, x_1 + \lambda (t-t_1)\right) \d{t}
    \geq 0 \,,
  \end{equation}
  which holds since the left hand side vanishes by the
  Rankine--Hugoniot condition~\eqref{eq:17}. It is immediate to check
  that the left hand side in~\eqref{eq:ei2} vanishes.

  If $u^\ell < k < u^r$, then~\eqref{eq:ei1} is equivalent to
  \begin{displaymath}
    (u^r - k) \left(\lambda - \dfrac{f^\epsilon (u^\ell) - f^\epsilon (u^r)}{u^\ell - u^r}\right)
    \int_{t_1}^{t_2} \phi \left(t, x_1 + \lambda (t-t_1)\right) \d{t}
    \geq 0 \,,
  \end{displaymath}
  which holds by Oleinik entropy condition~\eqref{eq:17}. On the other
  hand, \eqref{eq:ei2} is equivalent to
  \begin{displaymath}
    (k - u^\ell)
    \left(\dfrac{f^\epsilon (k) - f^\epsilon (u^\ell)}{k - u^\ell} - \lambda\right)
    \int_{t_1}^{t_2} \phi \left(t, x_1 + \lambda (t-t_1)\right) \d{t}
    \geq 0 \,,
  \end{displaymath}
  which again holds by Oleinik entropy condition~\eqref{eq:17}.

  If $u^\ell < u^r < k$, then~\eqref{eq:ei2} is equivalent
  to~\eqref{eq:caso1}, while the left hand side in~\eqref{eq:ei1}
  vanishes.

  The cases $k < u^r < u^\ell$, $u^r < k < u^\ell$ and
  $u^r < u^\ell < k$ are entirely analogous.

  \smallskip

  \noindent%
  \textbf{2.} Assume $k < u^\ell < u^r$. Then, direct computations
  show that~\eqref{eq:ei1} is equivalent to
  \begin{displaymath}
    \left[
      -\left(f^\epsilon (u^r) - f^\epsilon (k)\right)
      +
      \norma{Df^\epsilon}_{\L\infty (\mathcal{U}; \reali)} \, (u^\ell - k)
    \right]
    \int_{t_1}^{t_2} \phi (t, 0) \d{t}
    \geq 0 \,.
  \end{displaymath}
  Note that, by the Lipschitz continuity of $f^\epsilon$, we have
  \begin{displaymath}
    -\left(f^\epsilon (u^r) - f^\epsilon (k)\right)
    +
    \norma{Df^\epsilon}_{\L\infty (\mathcal{U}; \reali)} \, (u^\ell - k)
    \geq
    f^\epsilon (u^\ell) - f^\epsilon (u^r)
  \end{displaymath}
  and the latter term above is non negative by~\eqref{eq:18}. Hence,
  the left hand side in~\eqref{eq:ei2} vanishes.

  Assume $u^\ell < k < u^r$. Then, \eqref{eq:ei1} is equivalent to
  \begin{displaymath}
    \left(f^\epsilon (k) - f^\epsilon (u^r)\right)
    \int_{t_1}^{t_2} \phi (t, 0) \d{t}
    \geq 0 \,,
  \end{displaymath}
  which holds by Oleinik entropy condition~\eqref{eq:18}. Hence,
  \eqref{eq:ei2} reads
  \begin{displaymath}
    \norma{Df^\epsilon}_{\L\infty (\mathcal{U};\reali)} \,
    (k - u^\ell)
    \int_{t_1}^{t_2} \phi (t, 0) \d{t}
    \geq
    0
  \end{displaymath}
  and this inequality clearly holds.

  Assume $u^\ell < u^r < k$. Then, the left hand side in~\eqref{eq:ei1}
  vanishes. Condition~\eqref{eq:ei2} becomes
  \begin{displaymath}
    \left[
      \left(f^\epsilon (u^r) - f^\epsilon (k)\right)
      +
      \norma{Df^\epsilon}_{\L\infty (\mathcal{U}; \reali)} \, (k - u^\ell)
    \right]
    \int_{t_1}^{t_2} \phi (t, 0) \d{t}
    \geq 0 \,.
  \end{displaymath}
  Note that, by the Lipschitz continuity of $f^\epsilon$, we have
  \begin{displaymath}
    \left(f^\epsilon (u^r) - f^\epsilon (k)\right)
    +
    \norma{Df^\epsilon}_{\L\infty (\mathcal{U}; \reali)} \, (k - u^\ell)
    \geq
    \norma{Df^\epsilon}_{\L\infty (\mathcal{U}; \reali)} (u^r - u^\ell)
  \end{displaymath}
  and the latter term above is non negative in the present case.

  \paragraph{A.3)~The map $t \to \tv\left(u^\epsilon (t)\right)$ is
    uniformly bounded, as long as $u^\epsilon$ is defined.} Introduce
  for $t \in [0,T]$, the Glimm functional
  \begin{equation}
    \label{eq:Vepsilon}
    V^\epsilon (t)
    =
    \tv\left(u^\epsilon (t)\right)
    +
    \tv\left(u^\epsilon_b ; [t,T]\right)
    +
    \modulo{u_b^\epsilon(t+) - u^\epsilon (t,0+)} \,.
  \end{equation}
  Clearly, $\tv\left(u^\epsilon (t)\right) \leq V^\epsilon (t)$.  We
  claim that $t \to V^\epsilon (t)$ is non increasing. Indeed, at an
  interaction time, the proof in~\cite[\S~6.1]{BressanLectureNotes}
  applies in case~(i) in Step~\textbf{A.1}, while minor modifications
  yield the proof in the other two cases~(ii) and~(iii). The
  inequality $V^\epsilon (t) \leq V^\epsilon (0)$ implies
  \begin{equation}
    \label{eq:12}
    \tv\left(u^\epsilon (t)\right)
    \leq
    \tv (u^\epsilon_o)
    +
    \tv \left(u^\epsilon_b; [0,t]\right)
    +
    \modulo{u^\epsilon_b (0+) - u^\epsilon_o (0+)} \,.
  \end{equation}

  \paragraph{A.4)~The total number of interactions is finite and
    $u^\epsilon$ is defined for all $t \in [0,T]$.} When $t$ is not
  a time of interaction, define the weighted number of discontinuities
  in $u^\epsilon (t)$ as
  \begin{eqnarray*}
    \sharp (t)
    & = &
          \left[\mbox{number of discontinuities in }u^\epsilon (t)\right]
    \\
    & &
        +
        2\frac{\norma{u_b}_{\L\infty (\reali_+; \reali)}
        +
        \norma{u_o}_{\L\infty (\reali_+; \reali)}}{\epsilon} \,
        \left[\mbox{number of discontinuities in }
        {\mathscr{T}}_t u^\epsilon_b \right]
    \\
    & &
        +
        \frac{1}{\epsilon} \, \modulo{u_b^\epsilon (t) - u^\epsilon (t, 0)} \,,
  \end{eqnarray*}
  where we used the notation~\eqref{eq:Trasl}. If $t$ is an
  interaction time, set $\sharp(t) = \lim_{\tau \to t+} \sharp(\tau)$.

  The procedure in~\cite[\S~6.1]{BressanLectureNotes} can be applied,
  ensuring that at those interaction times where $\sharp$ increases,
  $V^\epsilon$ diminishes by at least $\epsilon$.

  \paragraph{A.5)~Range of $u^\epsilon$.} At any interaction time $t_*$,
  the new values attained by $u^\epsilon$ lie in the convex hull of
  the values attained by $u^\epsilon$ before time $t_*$, proving that
  $u^\epsilon (t, x) \subseteq \mathcal{U} (u_o, {u_b}_{|[0,t]})$ for
  a.e.~$(t,x) \in [0,T] \times \reali_+$, with the
  notation~\eqref{eq:19}. It is then immediate to verify that at any
  time $t \in [0,T]$
  \begin{equation}
    \label{eq:13}
    \norma{u^\epsilon (t)}_{\L\infty (\reali_+; \reali)}
    \leq
    \max \left\{
      \norma{u_o}_{\L\infty (\reali_+;\reali)}, \,
      \norma{u_b}_{\L\infty ([0,t];\reali)}
    \right\} \,.
  \end{equation}

  \paragraph{A.6)~$\L1$--Lipschitz continuity of $t \to u^\epsilon (t)$.}
  Assume that $t_2 > t_1$.  Observe first that $u^\epsilon$ remains
  unaltered on the interval $[0, t_2]$ if $u_b^\epsilon$ is
  substituted by
  $\tilde u_b^\epsilon = u_b^\epsilon \, \caratt{[0, t_2]} +
  u_b^\epsilon (t_2+) \, \caratt{[t_2, T]}$.
  At any interaction time $t_*$, if $t_1 < t_* < t_2$ and $t_2 - t_1$
  is sufficiently small, denoting
  $\mathcal{U} = \mathcal{U} (u_o, {u_b}_{|[0, t_2]})$ as
  in~\eqref{eq:19},
  \begin{align}
    \nonumber
    &
      \norma{u^\epsilon (t_2) - u^\epsilon (t_1)}_{\L1 (\reali_+; \reali)}
    \\
    \nonumber
    \leq  \ &
              \norma{f'}_{\L\infty (\mathcal{U}; \reali)} \;
              V^\epsilon (t_*) \; \modulo{t_2 - t_1}
              \qquad\qquad\qquad\qquad\,[\mbox{by \cite[Formula~(6.14)]{BressanLectureNotes} and~\eqref{eq:12}}]
    \\
    \nonumber
    \leq \
& \norma{f'}_{\L\infty (\mathcal{U}; \reali)} \;
             V^\epsilon (0) \; \modulo{t_2 - t_1}
             \qquad\qquad\qquad\quad\quad\;\;[\mbox{since $t \to V^\epsilon (t)$ is non increasing}]
    \\
    \nonumber
    = \
    & \norma{f'}_{\L\infty (\mathcal{U}; \reali)} \;
          \left(
          \tv(u_o^\epsilon)
          +
          \tv(\tilde u^\epsilon_b)
          +
          \modulo{\tilde u_b^\epsilon(0+) - u^\epsilon_o (0+)}
          \right)
          \modulo{t_2 - t_1}
          \qquad\quad\hfill[\mbox{by~\eqref{eq:Vepsilon}}]
    \\
    \label{eq:7}
    = \
    & \norma{f'}_{\L\infty (\mathcal{U}; \reali)} \;
          \left(
          \tv(u_o^\epsilon)
          +
          \tv(u^\epsilon_b;[0,t_2])
          +
          \modulo{u_b^\epsilon(0+) - u^\epsilon_o (0+)}
          \right)
          \modulo{t_2 - t_1} \,.
  \end{align}

  \paragraph{A.7)~Existence of a Solution.}
  By Helly Theorem~\cite[Theorem~2.4]{BressanLectureNotes}, for any
  sequence $\epsilon_n$ converging to $0$, the sequence
  $u^{\epsilon_n}$ converges pointwise almost everywhere, up to a
  subsequence, to a map $u$. We now show that this limit function $u$
  is a weak entropy solution to~\eqref{eq:1aut}, in the sense of
  Definition~\ref{def:solsk}. Any $u^{\epsilon_n}$ is a weak entropy
  solution to~\eqref{eq:6} by Step~\textbf{A.2}; hence,
  $u^{\epsilon_n}$ satisfies for any $k \in \reali$ and for any test
  function $\phi \in \Cc1 (\reali \times \reali;\reali)$ the two
  entropy inequalities
  \begin{align}
    \label{eq:r1}
    0 \leq \
    & \int_0^T \int_{\reali_+}
      \left\{ \eta_k^\pm \left(u^{\epsilon_n} (t,x)\right)
      \, \partial_t \phi (t,x) +
      \Phi_{k,n}^\pm\left(u^{\epsilon_n} (t,x)\right)
      \, \partial_x \phi (t,x) \right\} \d{x} \d{t}
    \\
    \label{eq:r2}
    & + \int_{\reali_+} \eta_k^\pm \left(u_o^{\epsilon_n} (x)\right) \, \phi (0,x)
      \d{x} - \int_{\reali_+} \eta_k^\pm \left(u^{\epsilon_n} (T,x)\right) \, \phi
      (T,x) \d{x}
    \\
    \label{eq:r3}
    & + \norma{Df^{\epsilon_n}}_{\L\infty (\mathcal{U};\reali)}
      \int_0^T \eta_k^\pm \left(u_b^{\epsilon_n} (t)\right) \,
      \phi (t,0) \d{t},
  \end{align}
  where $\eta_k^{\pm}$ and $\Phi_{k,n}^{\pm}$ are defined as
  in~\eqref{eq:sken}, using the autonomous flux function
  $f^{\epsilon_n}$.

  Consider each term separately. Since $\eta_k^\pm$ are Lipschitz
  continuous function with Lipschitz constant $1$, we can estimate the
  first term in~\eqref{eq:r1} as follows:
  \begin{align}
    \nonumber
    & \int_0^T \int_{\reali_+}
      \eta_k^\pm \left(u^{\epsilon_n} (t,x)\right)
      \, \partial_t \phi (t,x) \d{x} \d{t}
    \\
    \nonumber
    = \ &
             \int_0^T \!\int_{\reali_+}\!
             \eta_k^\pm \left(u (t,x)\right)
             \, \partial_t \phi (t,x) \d{x} \d{t}
             +
             \int_0^T \!\int_{\reali_+}\!
             \left(\eta_k^\pm \left(u^{\epsilon_n} (t,x)\right)
             -
             \eta_k^\pm \left(u (t,x)\right)\right)
             \, \partial_t \phi (t,x) \d{x} \d{t}
    \\
    \label{eq:r1ok}
    \leq \ &
             \int_0^T \!\int_{\reali_+}\!
             \eta_k^\pm \left(u (t,x)\right)
             \, \partial_t \phi (t,x) \d{x} \d{t}
             +
             \int_0^T \!\int_{\reali_+}\!
             \modulo{u^{\epsilon_n} (t,x) - u (t,x)}
             \, \partial_t \phi (t,x) \d{x} \d{t}
  \end{align}
  and the second addend in~\eqref{eq:r1ok} goes to $0$ as $\epsilon_n$
  goes to $0$.

  Concerning the second term in~\eqref{eq:r1}, proceed as follows:
  \begin{align}
    \nonumber
    & \int_0^T \int_{\reali_+}
      \Phi_{k,n}^\pm\left(u^{\epsilon_n} (t,x)\right)
      \, \partial_x \phi (t,x) \d{x} \d{t}
    \\
    \nonumber
    \leq \
    &
      \int_0^T \!\int_{\reali_+}\!
      \Phi_{k}^\pm\left(u (t,x)\right)
      \, \partial_x \phi (t,x) \d{x} \d{t}
    \\
    \nonumber
    &
      +
      \int_0^T \!\int_{\reali_+}\!
      \left(\Phi_{k,n}^\pm\left(u (t,x)\right)
      -
      \Phi_{k}^\pm\left(u (t,x)\right)\right)
      \, \partial_x \phi (t,x) \d{x} \d{t}
    \\
    \nonumber
    &
      +
      \int_0^T \!\int_{\reali_+}\!
      \left(\Phi_{k,n}^\pm\left(u^{\epsilon_n} (t,x)\right)
      -
      \Phi_{k,n}^\pm\left(u (t,x)\right)\right)
      \, \partial_x \phi (t,x) \d{x} \d{t}
    \\
    \label{eq:r1a}
    \leq \
    &
      \int_0^T \!\int_{\reali_+}\!
      \Phi_{k}^\pm\left(u (t,x)\right)
      \, \partial_x \phi (t,x) \d{x} \d{t}
    \\
    \label{eq:r1b}
    &  +
      \int_0^T \!\int_{\reali_+}\!
      \operatorname{sgn}^\pm (u-k) \left(f^{\epsilon_n}\left(u (t,x)\right)
      - f^{\epsilon_n}(k) - f\left(u (t,x)\right) + f(k) \right)
      \, \partial_x \phi (t,x) \d{x} \d{t}
    \\
    \label{eq:r1c}
    &
      +
      \norma{D f^{\epsilon_n}}_{\L\infty (\mathcal{U};\reali)}
      \int_0^T \!\int_{\reali_+}\!
      \modulo{u^{\epsilon_n} (t,x) - u(t,x)}\, \partial_x \phi (t,x) \d{x} \d{t}
  \end{align}
  and, as $\epsilon_n$ tends to $0$, \eqref{eq:r1b} goes to $0$ since
  $f^{\epsilon_n}$ converges uniformly to $f$, while~\eqref{eq:r1c}
  vanishes in the limit due to the convergence of $u^{\epsilon_n}$ to
  $u$.

  The two terms in~\eqref{eq:r2} are treated in the same way:
  \begin{align}
    \nonumber
    & \int_{\reali_+} \eta_k^\pm \left(u_o^{\epsilon_n} (x)\right)
      \, \phi (0,x) \d{x}
      -
      \int_{\reali_+} \eta_k^\pm \left(u^{\epsilon_n} (T,x)\right)
      \, \phi (T,x) \d{x}
    \\
    \label{eq:r2a}
    = \
    & \int_{\reali_+} \eta_k^\pm \left(u_o (x)\right) \, \phi (0,x) \d{x}
      -
      \int_{\reali_+} \eta_k^\pm \left(u (T,x)\right) \, \phi (T,x) \d{x}
    \\
    \label{eq:r2b}
    & +
      \int_{\reali_+}
      \left( \eta_k^\pm \left(u_o^{\epsilon_n} (x)\right)
      - \eta_k^\pm \left(u_o (x)\right) \right) \, \phi (0,x) \d{x}
    \\
    \label{eq:r2c}
    & -
      \int_{\reali_+}
      \left( \eta_k^\pm \left(u^{\epsilon_n} (T,x)\right)
      - \eta_k^\pm \left(u (T,x)\right) \right) \, \phi (T,x) \d{x}
  \end{align}
  and, since $\eta_k^\pm$ are Lipschitz continuous with constant $1$,
  \eqref{eq:r2b} and~\eqref{eq:r2c} vanish as $\epsilon_n$ goes to
  $0$, due to the assumptions~\eqref{eq:app} on the initial datum and
  to the fact that $u^{\epsilon_n}$ converges to $u$.

  Pass now to~\eqref{eq:r3}. Thanks to
  $\norma{Df^{\epsilon_n}}_{\L\infty (\mathcal{U};\reali)} \leq
  \norma{f'}_{\L\infty (\mathcal{U};\reali)}$,
  see~\eqref{eq:app}, we obtain
  \begin{align}
    \nonumber
    & \norma{Df^{\epsilon_n}}_{\L\infty (\mathcal{U};\reali)}
      \int_0^T \eta_k^\pm \left(u_b^{\epsilon_n} (t)\right) \,
      \phi (t,0) \d{t}
    \\
    \label{eq:r3a}
    \leq \
    & \norma{f'}_{\L\infty (\mathcal{U};\reali)}
      \int_0^T \!\eta_k^\pm \left(u_b (t)\right) \, \phi (t,0) \d{t}
    \\
    \label{eq:r3b}
    & +
      \norma{f'}_{\L\infty (\mathcal{U};\reali)}
      \int_0^T \! \left(
      \eta_k^\pm \left(u_b^{\epsilon_n} (t)\right)
      -
      \eta_k^\pm \left(u_b(t)\right)
      \right) \phi (t,0) \d{t}
  \end{align}
  and~\eqref{eq:r3b} vanishes as $\epsilon_n$ goes to $0$, thanks to
  the Lipschitz continuity of $\eta_k^\pm$ and the
  assumptions~\eqref{eq:app} on the boundary datum.

  Collecting together the results above, in the limit
  $\epsilon_n \to 0$, we obtain that $u$ is a weak entropy solution
  to~\eqref{eq:1aut}.

  \paragraph{A.8)~Conclusion.} Point~1.~holds by construction. For
  a.e.~$(t,x) \in [0,T] \times \reali_+$,
  $u^\epsilon (t, x) \subseteq \mathcal{U} (u_o, {u_b}_{|[0,t]})$
  and~\eqref{eq:13} imply Point~2. Formula~\eqref{eq:7} and the
  assumptions~\eqref{eq:app} on the $\epsilon$-approximation ensure
  Point~3. finally, Point~4.~follows from the inequalities
  \begin{align*}
    & \tv\left(u (t)\right) + \tv(u_b; [t,T])
    \\
    \leq \
    &   \lim_{\epsilon \to 0} \left(
             \tv\left(u^\epsilon (t)\right)
      +
      \tv\left(u_b^\epsilon (t); [t,T]\right)
             \right)
             \quad [\mbox{lower semicontinuity of the} \tv]
    \\
    \leq \ &
             \lim_{\epsilon \to 0} V^\epsilon (t)
             \quad [\mbox{see~\eqref{eq:Vepsilon}}]
    \\
    \leq \ &
             \lim_{\epsilon \to 0} V^\epsilon (0)
    \\
    \leq \ &
             \tv(u_o) + \tv (u_b)+ \modulo{u_b (0+)-u_o (0+)}.
  \end{align*}
  The above estimates ensure that
  $u \in (\L\infty \cap \BV) ([0,T] \times \reali_+; \reali)$.
\end{proofof}

\begin{proofof}{Theorem~\ref{thm:3}}
  To exploit the semigroup notation as in~\cite{AmadoriColombo97,
    BianchiniColombo, BressanLectureNotes}, we assume without loss of
  generality that $T=+\infty$.

  As in~\eqref{eq:app}, define for any positive $\epsilon$ the
  $\epsilon$--approximate fluxes
  $f^\epsilon, \, g^\epsilon \in \PLC (\reali; \reali)$, the
  $\epsilon$--approximate initial datum $u_o^\epsilon$ and boundary
  datum $u_b^\epsilon$. Let $\mathcal{D}^\epsilon$ be the set of pairs
  $\p = (u^\epsilon_o, u^\epsilon_b)$ such that
  $u^\epsilon_o \in (\L1 \cap \BV) (\reali_+; \epsilon\interi)$ and
  $u^\epsilon_b \in (\L1 \cap \BV) (\reali_+; \epsilon\interi)$,
  equipped with the norm
  $\norma{(u^\epsilon_o, u^\epsilon_b)}_{\mathcal{D}^\epsilon} =
  \max\left\{\norma{u^\epsilon_o}_{\L1 (\reali^+; \reali)}, \,
    \norma{u^\epsilon_b}_{\L1 (\reali_+;\reali)}\right\}$.
  The algorithm used in the proof of Proposition~\ref{prop:2} yields
  the semigroups
  \begin{displaymath}
    \begin{array}{@{}lcccccc}
      S^{f^\epsilon}
      & \colon
      & \reali_+
      & \times
      & \mathcal{D}^\epsilon
      & \to
      & \mathcal{D}^\epsilon
      \\
      &
      & t
      & ,
      & (u^\epsilon_o,u^\epsilon_b)
      & \mapsto
      &\left( u^\epsilon (t), \mathscr{T}_tu^\epsilon_b\right)
    \end{array}
    \quad
    \begin{array}{lcccccc@{}}
      S^{g^\epsilon}
      & \colon
      & \reali_+
      & \times
      & \mathcal{D}^\epsilon
      & \to
      & \mathcal{D}^\epsilon
      \\
      &
      & t
      & ,
      & (u^\epsilon_o,u^\epsilon_b)
      & \mapsto
      &\left( v^\epsilon (t), \mathscr{T}_tu^\epsilon_b\right)
    \end{array}
  \end{displaymath}
  using the notation~\eqref{eq:Trasl}. Note that
  $t \to u^\epsilon (t)$ and $t \to v^\epsilon (t)$ are at the same
  time $\epsilon$--approximate wave front tracking solutions
  to~\eqref{eq:14} and exact solutions to
  \begin{displaymath}
    \left\{
      \begin{array}{@{\,}l@{\qquad}r@{\,}c@{\,}l@{}}
        \partial_t u^\epsilon + \partial_x f^\epsilon (u^\epsilon) = 0
        & (t,x)
        & \in
        & \reali_+\times \reali_+
        \\
        u^\epsilon (0,x) = u_o^\epsilon (x)
        & x
        & \in
        & \reali_+
        \\
        u^\epsilon (t, 0) = u_b^\epsilon (t)
        & t
        & \in
        & \reali_+
      \end{array}
    \right.
    \mbox{ and }
    \left\{
      \begin{array}{@{\,}l@{\qquad}r@{\,}c@{\,}l@{}}
        \partial_t v^\epsilon + \partial_x g^\epsilon (v^\epsilon) = 0
        & (t,x)
        & \in
        & \reali_+\times \reali_+
        \\
        v^\epsilon (0,x) = u_o^\epsilon (x)
        & x
        & \in
        & \reali_+
        \\
        v^\epsilon (t, 0) = u_b^\epsilon (t)
        & t
        & \in
        & \reali_+ \,.
      \end{array}
    \right.
  \end{displaymath}
  Hence, applying Proposition~\ref{prop:lipdep} and using the above
  choice of the norm in $\mathcal{D}^\epsilon$, we have that
  $S_t^{f^\epsilon}$ and $S_t^{g^\epsilon}$ are Lipschitz continuous
  in both arguments, with
  \begin{equation}
    \label{eq:quellacheviene}
    \Lip (S_t^{g^\epsilon})
    \leq
    \max
    \left\{1, \norma{(g^{\epsilon})'}_{\L\infty (\mathcal{U}_t^\epsilon; \reali)}\right\}
    \leq
    \max
    \left\{1, \norma{g'}_{\L\infty (\mathcal{U}_t; \reali)}\right\} \,,
  \end{equation}
  where, using the notation~\eqref{eq:19},
  \begin{equation}
    \label{eq:27}
    \mathcal{U}_t^\epsilon = \mathcal{U} (u_o^\epsilon, {u_b^\epsilon}_{|[0,t]})
    \,,\qquad
    \mathcal{U}_t = \mathcal{U} (u_o, {u_b}_{|[0,t]})
    \quad \mbox{ and } \quad
    \mathcal{U}_t^\epsilon \subseteq \mathcal{U}_t
  \end{equation}
  due to~\eqref{eq:app}. By~\cite[Theorem~2.9]{BressanLectureNotes},
  \begin{eqnarray}
    \nonumber
    & &
        \norma{u^\epsilon (t) - v^\epsilon (t)}_{\L1 (\reali_+;  \reali)}
    \\
    \nonumber
    & = &
          \norma{
          S_t^{f^\epsilon} (u^\epsilon_o, u^\epsilon_b)
          -
          S_{t}^{g^\epsilon} (u^\epsilon_o, u^\epsilon_b)}_{\L1 (\reali_+; \reali^2)}
    \\
    \label{eq:liminf}
    & \leq &
             \Lip (S^{g^\epsilon}_t)
             \int_0^t \!
             \liminf_{h \to 0} \frac{1}{h}
             \norma{ S_h^{g^\epsilon} S_\tau^{f^\epsilon} (u^\epsilon_o, u^\epsilon_b)
             -
             S_h^{f^\epsilon} S_\tau^{f^\epsilon} (u^\epsilon_o, u^\epsilon_b)} _{\L1 (\reali_+; \reali^2)} \!
             \d\tau.
  \end{eqnarray}
  To simplify the notation, introduce
  $(w, w_b) = S^{f^\epsilon}_\tau (u^\epsilon_o,u^\epsilon_b)$.
  Outside a finite set of times $\tau$, each Riemann problem for
  $f^\epsilon$ in $w$ is solved by a single wave with speed
  $\lambda^f$. Let $\bar x$ be either $0$ or a point of jump in
  $w$. If $\bar x = 0$, set $w^\ell = w_b(0+) = u^\epsilon_b (\tau+)$,
  whereas $w^\ell = w (\bar x-)$ when $\bar x >0$. In both cases, let
  $w^r = w (\bar x+)$.  In general, the solution to the Riemann
  problem for $g^\epsilon$ with data $w^\ell$ and $w^r$ contains
  $n^\ell$ waves with speeds
  $\lambda^g_1 < \cdots < \lambda^g_{n^\ell} \leq \lambda^f$ and $n^r$
  waves with speeds
  $\lambda^f < \lambda^g_{n^\ell +1} < \cdots <
  \lambda^g_{n^\ell+n^r}$, see Figure~\ref{fig:incubo}.
  \begin{figure}[!h]
    \centering
    \includegraphics[width=.6\textwidth,trim=10 5 0 0,
    clip=true]{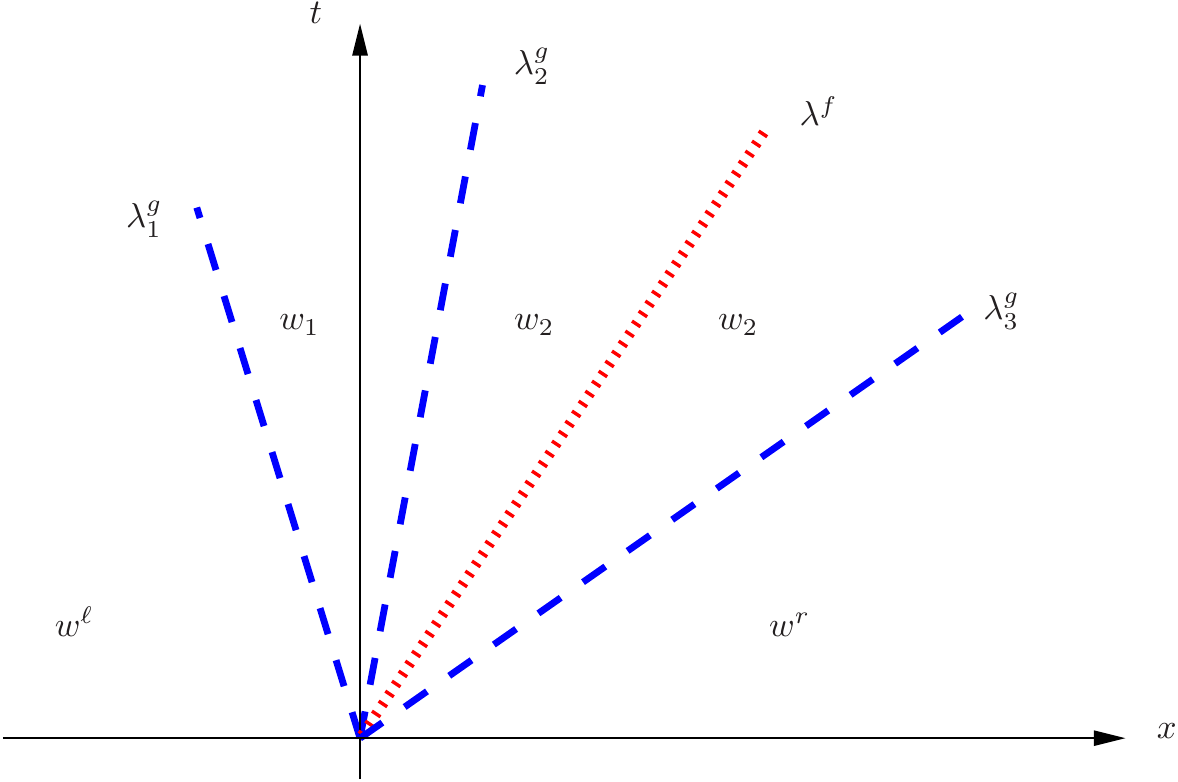}
    \caption{Notation used in the proof of Theorem~\ref{thm:3} with
      $n^\ell = 2$ and $n^r=1$.}
    \label{fig:incubo}
  \end{figure}
  Assume that the intermediate states are increasing
  $w^\ell < w_1 < \cdots < w_{n^\ell} < w_{n^\ell+1} < \ldots <
  w_{n^\ell+n^r} < w^r$,
  the other case being entirely analogous.  For a sufficiently small
  $\delta > 0$, call $I_\delta=[0,\delta]$ if $\bar x =0$ and
  $I_\delta = [\bar x-\delta, \bar x+\delta]$ if $\bar x>0$. We
  compute the integrand in~\eqref{eq:liminf} on $I_\delta$ through a
  repeated use of Rankine--Hugoniot condition:
  \begin{eqnarray}
    \nonumber
    &
    & \frac{1}{h}
      \, \norma{ S_h^{g^\epsilon} \, S_\tau^{f^\epsilon} (u^\epsilon_o, u^\epsilon_b)
      -
      S_h^{f^\epsilon} \, S_\tau^{f^\epsilon} (u^\epsilon_o, u^\epsilon_b)} _{\L1 (I_\delta; \reali^2)}
    \\
    \nonumber
    & =
    & \frac{1}{h}
      \, \norma{ S_h^{g^\epsilon}  (w, w_b)
      -
      S_h^{f^\epsilon} (w, w_b)}_{\L1 (I_\delta; \reali^2)}
    \\
    \nonumber
    & =
    & \sum_{i=1}^{n^\ell - 1}
      \modulo{w_i - w^\ell} \,  \modulo{\lambda^g_{i+1} - \lambda^g_i}
      +
      \modulo{w_{n^\ell} - w^\ell} \,  \modulo{\lambda^f - \lambda^g_{n^\ell}}
    \\
    \nonumber
    &
    & +
      \modulo{w^r - w_{n^\ell}} \,
      \modulo{\lambda^g_{n^\ell+1} - \lambda^f}
      +
      \sum_{i=1}^{n^r-1}
      \modulo{w^r - w_{n^\ell+i}} \,
      \modulo{\lambda^g_{n^\ell+i+1} - \lambda^g_{n^\ell+i}}
    \\
    \nonumber
    & =
    & \sum_{i=1}^{n^\ell - 1}
      (w_i - w^\ell) \,  (\lambda^g_{i+1} - \lambda^g_i)
      +
      (w_{n^\ell} - w^\ell) \,  (\lambda^f - \lambda^g_{n^\ell})
    \\
    \nonumber
    &
    & +
      (w^r - w_{n^\ell}) \,
      (\lambda^g_{n^\ell+1} - \lambda^f)
      +
      \sum_{i=1}^{n^r-1}
      (w^r - w_{n^\ell+i}) \,
      (\lambda^g_{n^\ell+i+1} - \lambda^g_{n^\ell+i})
    \\
    \label{eq:no}
    & =
    & \left(g^\epsilon (w^r) - g^\epsilon (w_{n^\ell})\right)
      -
      \left(g^\epsilon (w_{n^\ell}) - g^\epsilon (w^\ell)\right)
      +
      (w_{n^\ell} - w^\ell) \lambda^f
      -
      (w^r - w_{n^\ell}) \lambda^f. \qquad\
  \end{eqnarray}
  Note that by Oleinik Entropy
  condition~\cite[Formula~(8.4.3)]{DafermosBook}
  \begin{displaymath}
    \frac{f^\epsilon (w^r) - f^\epsilon (w_{n^\ell})}{w^r - w_{n^\ell}}
    \leq
    \lambda^f
    \leq
    \frac{f^\epsilon (w_{n^\ell}) - f^\epsilon (w^\ell)}{w_{n^\ell} - w^\ell} \, ,
  \end{displaymath}
  so that, using~\eqref{eq:app} and the fact that
  $w^\ell, w_{n^\ell}, w^\ell \in \epsilon\interi$, continuing the
  estimate~\eqref{eq:no}, we obtain:
  \begin{eqnarray*}
    &
    & \frac{1}{h}
        \, \norma{ S_h^{g^\epsilon} \, S_\tau^{f^\epsilon} (u^\epsilon_o, u^\epsilon_b)
        -
        S_h^{f^\epsilon} \, S_\tau^{f^\epsilon} (u^\epsilon_o, u^\epsilon_b)} _{\L1 (I_\delta; \reali^2)}
    \\
    & \leq
    & \left((g^\epsilon-f^\epsilon) (w^r) - (g^\epsilon-f^\epsilon) (w_{n^\ell})\right)
             -
             \left((g^\epsilon-f^\epsilon) (w_{n^\ell}) - (g^\epsilon-f^\epsilon) (w^\ell)\right)
    \\
    & =
    & \left((g - f) (w^r) - (g - f) (w_{n^\ell})\right)
          -
          \left((g - f) (w_{n^\ell}) - (g - f) (w^\ell)\right)
    \\
    & \leq
    & \norma{D (g-f)}_{\C0 ([w_{n^\ell}, w^r]; \reali)} \; \modulo{w^r - w_{n^\ell}}
             +
             \norma{D (g-f)}_{\C0 ([w^\ell, w_{n^\ell}]; \reali)} \; \modulo{w_{n^\ell}-w^\ell}
    \\
    & \leq
    & \norma{D (g-f)}_{\C0 ([w^\ell, w^r]; \reali)} \; \modulo{w^r - w^\ell} \,.
  \end{eqnarray*}
  By~\eqref{eq:27},
  $\mathcal{U}^\epsilon_\tau = \mathcal{U} (u^\epsilon_o,
  {u^\epsilon_b}_{|{[0,\tau]}}) \supseteq \mathcal{U} (w, w_b)$.
  Considering all Riemann problems for $u^\epsilon$ at time $\tau$
  along $\reali_+$, the integrand in~\eqref{eq:liminf} becomes
  \begin{eqnarray*}
    &
    &\frac{1}{h} \, \norma{ S_h^{g^\epsilon} \, S_\tau^{f^\epsilon}
        (u^\epsilon_o, u^\epsilon_b) - S_h^{f^\epsilon} \, S_\tau^{f^\epsilon} (u^\epsilon_o,
        u^\epsilon_b)}_{\L1 (\reali_+; \reali^2)}
    \\
    & \leq
    & \norma{D (g - f)}_{\C0 (\mathcal{U}^\epsilon_\tau; \reali)}
             \left( \tv(w) + \modulo{w_b(0+) - w (0+)} \right).
  \end{eqnarray*}
  Exploiting the functional $V^\epsilon$ defined
  in~\eqref{eq:Vepsilon} and the fact that
  $V^\epsilon (\tau) \leq V^\epsilon (0)$, we obtain
  \begin{eqnarray*}
    &
    & \frac{1}{h} \,
      \norma{
      S_h^{g^\epsilon} \, S_\tau^{f^\epsilon} (u^\epsilon_o, u^\epsilon_b)
      -
      S_h^{f^\epsilon} \, S_\tau^{f^\epsilon} (u^\epsilon_o, u^\epsilon_b)}_{\L1 (\reali_+; \reali^2)}
    \\
    & \leq
    & \norma{D (g - f)}_{\C0 (\mathcal{U}^\epsilon_\tau; \reali)}
      \left(
      \tv(u^\epsilon_o)
      +
      \tv\left(u^\epsilon_b; [0,\tau] \right)
      +
      \modulo{ u^\epsilon_b(0+) - u^\epsilon_o (0+)}
      \right).
  \end{eqnarray*}
  Hence, \eqref{eq:liminf} becomes
  \begin{equation}
    \label{eq:22}
    \begin{aligned}
      & \norma{u^\epsilon (t) - v^\epsilon (t)}_{\L1 (\reali_+;
        \reali)}
      \\
      \leq \ & t \, \Lip (S^{g^\epsilon}_t) \, \norma{D (g - f)}_{\C0
        (\mathcal{U}^\epsilon_t; \reali)} \left( \tv(u^\epsilon_o) +
        \tv\left(u^\epsilon_b; [0,t]\right) + \modulo{
          u^\epsilon_b(0+) - u^\epsilon_o (0+)} \right),
    \end{aligned}
  \end{equation}
  where $\Lip (S^{g^\epsilon}_t)$ is estimated as
  in~\eqref{eq:quellacheviene}.

  \smallskip

  Let now $u$ and $v$ be the solutions to the
  problems~\eqref{eq:14}. Similarly to above, let $\mathcal{D}$ be the
  set of pairs $\p = (u_o, u_b)$ such that
  $u_o\in (\L1 \cap \BV) (\reali_+; \reali)$ and
  $u_b\in (\L1 \cap \BV) (\reali_+; \reali)$. Thanks to
  Proposition~\ref{prop:2}, the following two semigroups are then
  defined as limit of the semigroups $S^{f^\epsilon}$ and
  $S^{g^\epsilon}$ introduced above:
  \begin{displaymath}
    \begin{array}{@{}lcccccc}
      S^{f}
      & \colon  & \reali_+  & \times & \mathcal{D}  & \to
      & \mathcal{D}
      \\
      & & t & ,& (u_o,u_b) & \mapsto
      &\left( u (t), \mathscr{T}_tu_b\right)
    \end{array}
    \quad
    \begin{array}{lcccccc@{}}
      S^{g} & \colon  & \reali_+  & \times & \mathcal{D}  & \to
      & \mathcal{D}
      \\
            & & t & ,& (u_o,u_b) & \mapsto
      &\left( v (t), \mathscr{T}_tu_b\right) \,.
    \end{array}
  \end{displaymath}
  Let $u^\epsilon_o$ and $u^\epsilon_b$ approximate $u_o$ and $u_b$ as
  in~\eqref{eq:app}. Clearly
  $\left(u^\epsilon_o, u^\epsilon_b\right) \in \mathcal{D}^\epsilon$.
  Compute
  \begin{align}
    \nonumber \norma{u(t) - v (t)}_{\L1 (\reali_+; \reali)} = \ &
                                                                  \norma{S^f_t (u_o, u_b) - S^g_t (u_o,u_b)}_{\L1 (\reali_+;
                                                                  \reali^2)}
    \\
    \label{eq:a1}
    \leq \ & \norma{S^f_t (u_o, u_b) - S^{f^\epsilon}_t
             (u^\epsilon_o,u^\epsilon_b)}_{\L1 (\reali_+; \reali^2)}
    \\
    \label{eq:a2}
                                                                & + \norma{S^{f^\epsilon}_t (u^\epsilon_o,u^\epsilon_b) -
                                                                  S^{g^\epsilon}_t (u^\epsilon_o,u^\epsilon_b)}_{\L1 (\reali_+;
                                                                  \reali^2)}
    \\
    \label{eq:a3}
                                                                & + \norma{S^{g^\epsilon}_t (u^\epsilon_o,u^\epsilon_b) - S^g_t
                                                                  (u_o,u_b)}_{\L1 (\reali_+; \reali^2)}.
  \end{align}
  Thanks to~\eqref{eq:app} and~\eqref{eq:22}, the second
  addend~\eqref{eq:a2} can be estimated as
  \begin{displaymath}
    \begin{array}{@{}c@{\;}l@{}}
      &
        \displaystyle
        \norma{
        S^{f^\epsilon}_t (u^\epsilon_o,u^\epsilon_b)
        -
        S^{g^\epsilon}_t (u^\epsilon_o,u^\epsilon_b)}_{\L1 (\reali_+; \reali^2)}
      \\
      \leq
      & \displaystyle
        t \, \Lip (S^{g^\epsilon}_t) \, \norma{D (g - f)}_{\C0 (\mathcal{U}^\epsilon_t; \reali)}
        \left(
        \tv(u^\epsilon_o)
        +
        \tv(u^\epsilon_b; [0,t])
        +
        \modulo{u^\epsilon_b(0+) - u^\epsilon_o (0+)}
        \right)
      \\
      \leq
      & \displaystyle
        t \, \max \! \left\{1, \norma{g'}_{\L\infty (\mathcal{U}_t;\reali)}\right\}
        \norma{D (g - f)}_{\C0 (\mathcal{U}_t; \reali)}
        \left[
        \tv \! (u_o)
        +
        \tv \! \left(u_b; [0,t]\right)
        +
        \modulo{ u_b(0+) - u_o(0+)}
        \right]
    \end{array}
  \end{displaymath}
  where we used~\eqref{eq:quellacheviene} and~\eqref{eq:27}. The
  terms~\eqref{eq:a1} and~\eqref{eq:a3} converge to $0$ as
  $\epsilon \to 0$, due to the construction of the
  $\epsilon$--solutions above. The proof is completed.
\end{proofof}

\subsection{Proofs related to the Non Autonomous IBVP on the
  Half--Line}
\label{subs:NAHL}

\begin{proofof}{Proposition~\ref{prop:2t}} The proof consists of
  several steps.

  \paragraph{N.1)~Construction of the Approximate Solutions.} For
  $n \in \naturali$ and $i= 0, \ldots, 2^n$, define
  $T^i_n = \frac{i}{2^n} \, T$. For $i=0, \ldots, 2^{n}-1$, we
  recursively consider the autonomous problems
  \begin{equation}
    \label{eq:21}
    \left\{
      \begin{array}{l@{\qquad}r@{\,}c@{\,}l}
        \partial_t u^i_n + \partial_x f (T^i_n,u^i_n) = 0
        &
          (t,x) & \in & [T^i_n, T^{i+1}_n] \times \reali_+
        \\
        u^i_n (T^i_n,x) = u^{i-1}_n (T^i_n, x)
        &
          x & \in & \reali_+
        \\
        u^i_n (t, 0) = u_b (t)
        &
          t & \in & [T^i_n, T^{i+1}_n] ,
      \end{array}
    \right.
  \end{equation}
  where we set $u^{-1}_n = u_o$.  Each of these problems falls within
  the scope of Proposition~\ref{prop:2}. Therefore, for any
  $\epsilon>0$ define as in~\eqref{eq:app} the $\epsilon$-approximate
  initial and boundary data $u_o^\epsilon$ and $u_b^\epsilon$.
  Moreover, for $i=0, \ldots, 2^n-1$, define the
  $\epsilon$-approximate fluxes $u \to f^\epsilon (T^i_n, u)$. Call
  $u^{i,\epsilon}_n$ the wave front tracking $\epsilon$-approximate
  solution to~\eqref{eq:21} constructed as in
  Proposition~\ref{prop:2}. Then, the solution $u^i_n$
  to~\eqref{eq:21} satisfies
  $u^i_n = \lim_{\epsilon\to 0} u^{i,\epsilon}_n$.

  For $i=0, \ldots, 2^n-1$ define
  \begin{align}
    \label{eq:uin}
    u_n (t)
    = \
    & u^i_n (t) 
      \quad \mbox{ for } \quad t \in [T^i_n, T^{i+1}_n]
    \\
    \label{eq:Vin}
    V^{i,\epsilon}_n
    = \
    & \tv\left(u^{i-1,\epsilon}_n (T^i_n)\right) +
      \tv\left(u_b^\epsilon; [T^i_n,T]\right) +
      \modulo{u^\epsilon_b (T^i_n+) - u^{i-1,\epsilon}_n (T^i_n, 0+)}
    \\
    \nonumber
    \mathcal{U}_t = \
    & \mathcal{U} (u_o, {u_b}_{|[0,t]})
      \quad \mbox{ and } \quad \mathcal{U} = \mathcal{U}_T
      \quad \mbox{with the notation~\eqref{eq:19}}
    \\
    \nonumber L = \
    & 1 + \norma{\partial_u f}_{\L\infty ([0,T]\times
      \mathcal{U}; \reali)}
    \\
    \nonumber K = \
    & \tv\left(u_{o} \right) + \tv\left(u_b; [0,
      T]\right) + \modulo{u_b (0+) - u_o (0+)} \,.
  \end{align}
  The quantity $V^{i,\epsilon}_n$ is the functional defined
  in~\eqref{eq:Vepsilon} computed at time $t=T^i_n$. Hence,
  by~\textbf{A.3} in the proof of Proposition~\ref{prop:2}, we
  recursively obtain
  \begin{equation}
    \label{eq:8}
    V^{i,\epsilon}_n \leq V^{i-1,\epsilon}_n
    \quad \mbox{ for all } \quad i=1, \ldots, 2^n-1 \,.
  \end{equation}
  \begin{figure}[!h]
    \centering
    \includegraphics[width=.6\textwidth,trim=0 6 0 0,
    clip=true]{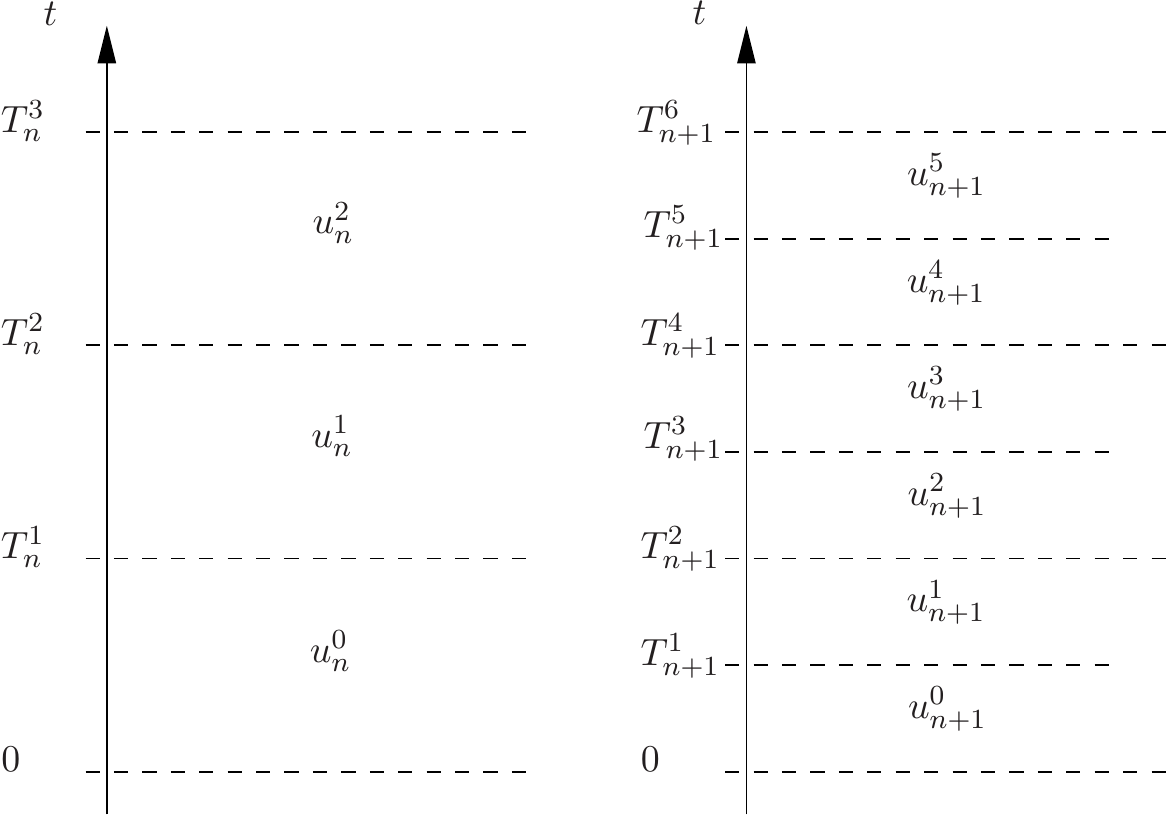}
    \caption{Relations between the time interval $[T^i_n, T^{i+1}_n]$,
      where the approximate solution is $u^i_n$, and the time
      intervals $[T^{2i}_{n+1}, T^{2i+1}_{n+1}]$ and
      $[T^{2i+1}_{n+1}, T^{2i+2}_{n+1}]$, where the approximate
      solutions are $u^{2i}_{n+1}$ and $u^{2i+1}_{n+1}$,
      see~\eqref{eq:21}.}
    \label{fig:times}
  \end{figure}

  \paragraph{N.2)~$\boldsymbol{u_n}$ is a Cauchy sequence in
    $\C0\left([0,T]; \L1 (\reali_+; \reali)\right)$.} Here and in what
  follows, we use the norm
  $\norma{u}_{\C0\left([0,T]; \L1 (\reali_+; \reali)\right)} = \sup_{t
    \in [0,T]} \norma{u (t)}_{\L1 (\reali_+; \reali)}$.
  It is sufficient to obtain
  \begin{equation}
    \label{eq:O}
    \norma{u_{n+1} - u_n}_{\C0\left([0,T]; \L1 (\reali_+; \reali)\right)} = \O \, 2^{-n}
  \end{equation}
  as soon as the constant $\O$ is independent of $n$, which in turn
  follows from the bounds
  \begin{equation}
    \label{eq:23}
    \!\!\!\!\!
    \begin{array}{@{}r@{\;}c@{\;}l@{\quad}r@{\,}c@{\,}l@{}}
      \norma{u^{2j}_{n+1} (t) - u^{j}_n (t)}_{\L1 (\reali_+; \reali)}
      &  \leq
      & j L  K
        \norma{\partial_t \partial_u f}_{\L\infty ([0,T]\times \mathcal{U}; \reali)}
        \left(\frac{T}{2^{n+1}}\right)^2
      & t
      & \in
      & [T^j_n, T^{2j+1}_{n+1}]
      \\
      \norma{u^{2j+1}_{n+1} (t) - u^{j}_n (t)}_{\L1 (\reali_+; \reali)}
      & \leq
      & (j+1) L  K
        \norma{\partial_t \partial_u f}_{\L\infty ([0,T]\times \mathcal{U}; \reali)}
        \left(\frac{T}{2^{n+1}}\right)^2
      & t
      & \in
      & [T^{2j+1}_{n+1}, T^{j+1}_n].
    \end{array}
    \!\!\!\!\!
  \end{equation}
  Fix $n$ and proceed inductively on $j$.

  \noindent$\star$ $j=0$. Assume first that $t \in [0, T^1_{n+1}]$,
  see Figure~\ref{fig:times}. By~\eqref{eq:21} we immediately have
  $u^0_{n+1} (t) = u^0_n (t)$ for $t \in [0, T^1_{n+1}]$.  Let now
  $t \in [T^1_{n+1},T^1_n]$, see Figure~\ref{fig:times}. Compute
  \begin{equation}
    \label{eq:2}
    \!\!\!\!\!\!\!\!\!\!
    \begin{array}{@{}c@{\,}l@{}}
      & \norma{u^1_{n+1} (t) - u^0_n (t)}_{\L1 (\reali_+; \reali)}
      \\
      \leq
      & \norma{u^1_{n+1} (t) - u^{1,\epsilon}_{n+1}
        (t)}_{\L1(\reali_+; \reali)}
        +
        \norma{u^{1,\epsilon}_{n+1} (t) - u^{0,\epsilon}_n (t)}_{\L1 (\reali_+; \reali)}
        +
        \norma{u^{0,\epsilon}_{n} (t) - u^0_n (t)}_{\L1 (\reali_+; \reali)}.
    \end{array}
    \!\!\!\!\!\!\!\!\!\!\!\!\!\!\!
  \end{equation}
  Focus on the term in the middle: an application of~\eqref{eq:22},
  yields
  \begin{align*}
    & \norma{u^{1,\epsilon}_{n+1} (t) - u^{0,\epsilon}_n (t)}_{\L1 (\reali_+; \reali)}
    \\
    \leq \
    & L
      \sup_{u \in \mathcal{U}}
      \modulo{\partial_u f^\epsilon (T^1_{n+1},u) - \partial_u f^\epsilon (0,u)} (t-T^1_{n+1})
    \\
    & \times \left(
      \tv\left(u^{0,\epsilon}_{n+1} (T^1_{n+1})\right)
      +
      \tv\left(u^\epsilon_b; [T^1_{n+1}, t]\right)
      +
      \modulo{u^\epsilon_b (T^1_{n+1}+) - u^{0,\epsilon}_{n+1} (T^1_{n+1}, 0+)}
      \right)
    \\
    \leq \
    & L \,
      \norma{\partial_t \partial_u f}_{\L\infty ([0,T]\times\mathcal{U};\reali)}
      \left(
      \frac{T}{2^{n+1}}\right)^2
      \left(V^{1,\epsilon}_{n+1} - \tv\left(u^\epsilon_b; [t,T]\right)
      \right)
    \\
    \leq \
    & L \,
      \norma{\partial_t \partial_u f}_{\L\infty ([0,T]\times\mathcal{U};\reali)}
      \left(
      \frac{T}{2^{n+1}}\right)^2
      \left(V^{0,\epsilon}_{n+1} - \tv\left(u^\epsilon_b; [t,T]\right)
      \right)
    \\
    \leq \
    & L \,
      \norma{\partial_t \partial_u f}_{\L\infty ([0,T]\times\mathcal{U};\reali)} \;
      \left(\frac{T}{2^{n+1}}\right)^2
      \left(
      \tv\left(u^\epsilon_{o} \right)
      +
      \tv\left(u^\epsilon_b; [0, t]\right)
      +
      \modulo{u^\epsilon_b (0+) - u^\epsilon_o (0+)}
      \right)
    \\
    \leq \
    & L \,
      \norma{\partial_t \partial_u f}_{\L\infty ([0,T]\times\mathcal{U};\reali)} \;
      \left(\frac{T}{2^{n+1}}\right)^2
      \left(
      \tv\left(u_{o} \right)
      +
      \tv\left(u_b; [0, t]\right)
      +
      \modulo{u_b (0+) - u_o (0+)}
      \right)
    \\
    \leq \
    & L \, K \,
      \norma{\partial_t \partial_u f}_{\L\infty ([0,T]\times\mathcal{U};\reali)}
      \left(\frac{T}{2^{n+1}}\right)^2,
  \end{align*}
  where we used~\eqref{eq:8} and~\eqref{eq:app}. Inserting the above
  estimate in~\eqref{eq:2} and letting $\epsilon \to 0$ yield the
  desired result.

  \smallskip

  \noindent$\star$ $j=1$. Assume first $t \in [T^1_{n},T^3_{n+1}]$,
  see Figure~\ref{fig:times}. In this time interval, the $n$- and the
  $(n+1)$-problem have the same flux, since $T^1_n=T^2_{n+1}$. An
  application of Proposition~\ref{prop:lipdep} to the autonomous
  problem~\eqref{eq:21} and using the result in the previous step
  $j=0$,
  \begin{align*}
    & \norma{u^{2}_{n+1} (t) - u^{1}_{n} (t)}_{\L1 (\reali_+; \reali)}
    \\
    \leq \
    & \norma{u^{1}_{n+1} (T^1_n) - u^{0}_n (T^1_n)}_{\L1 (\reali_+; \reali)}
    \\
    \leq  \
    & L \,
      \norma{\partial_t \partial_u f}_{\L\infty([0,T]\times\mathcal{U};\reali)}
      \left(\frac{T}{2^{n+1}}\right)^2
      \left(
      \tv\left(u_{o} \right)
      +
      \tv\left(u_b; [0, T^1_n]\right)
      +
      \modulo{u_b (0+) - u_o (0+)}
      \right)
    \\
    \leq \
    &  L \, K \,
      \norma{\partial_t \partial_u f}_{\L\infty ([0,T]\times\mathcal{U};\reali)}
      \left(\frac{T}{2^{n+1}}\right)^2 .
  \end{align*}
  Let now $t \in [T^3_{n+1},T^2_{n}]$, see Figure~\ref{fig:times}.
  Compute
  \begin{align}
    \label{eq:3}
    & \norma{u^3_{n+1} (t) - u^1_n (t)}_{\L1 (\reali_+; \reali)}
    \\
    \nonumber
    \leq \
    & \norma{u^3_{n+1} (t) - u^{3,\epsilon}_{n+1} (t)}_{\L1 (\reali_+; \reali)}
      +
      \norma{u^{3,\epsilon}_{n+1} (t) - u^{1,\epsilon}_{n} (t)}_{\L1 (\reali_+; \reali)}
      +
      \norma{u^{1,\epsilon}_{n} (t) - u^1_{n} (t)}_{\L1 (\reali_+; \reali)}.
  \end{align}
  Focus on the term in the middle: an application of
  Proposition~\ref{prop:lipdep} and of~\eqref{eq:22} yields
  \begin{align}
    \label{eq:questa}
    & \norma{u^{3,\epsilon}_{n+1} (t) - u^{1,\epsilon}_n (t)}_{\L1 (\reali_+; \reali)}
    \\
    \nonumber
    \leq \
    & \norma{u^{2,\epsilon}_{n+1} (T^3_{n+1}) - u^{1,\epsilon}_n (T^3_{n+1})}_{\L1 (\reali_+; \reali)}
    \\
    \nonumber
    & +
      L
      \sup_{u \in \mathcal{U}}
      \modulo{\partial_u f^\epsilon (T^3_{n+1},u) - \partial_u f^\epsilon (T^1_n,u)} (t-T^3_{n+1})
    \\
    \nonumber
    & \qquad
      \times
      \left(
      \tv\left(u^{2,\epsilon}_{n+1} (T^3_{n+1})\right)
      +
      \tv\left(u^\epsilon_b; [T^3_{n+1},t]\right)
      +
      \modulo{u^\epsilon_b (T^3_{n+1}+) - u^{2,\epsilon}_{n+1} (T^3_{n+1},0+)}
      \right) .
  \end{align}
  The term in the latter line above is estimated through a recursive
  use of~\eqref{eq:8}:
  \begin{align*}
    & \tv\left(u^{2,\epsilon}_{n+1} (T^3_{n+1})\right)
      +
      \tv\left(u^\epsilon_b; [T^3_{n+1},t]\right)
      +
      \modulo{u^\epsilon_b (T^3_{n+1}+) - u^{2,\epsilon}_{n+1} (T^3_{n+1},0+)}
    \\
    = \
    & V^{3,\epsilon}_{n+1} -
      \tv \left(u^\epsilon_b; [t,T]\right)
    \\
    \leq \
    & \tv(u^\epsilon_o)
      +
      \tv(u^\epsilon_b)
      +
      \modulo{u^\epsilon_b (0+) - u^\epsilon_{o} (0+)}
      -
      \tv \left(u^\epsilon_b; [t,T]\right)
    \\
    \leq \
    & \tv(u_o)
      +
      \tv\left(u_b, [0, t]\right)
      +
      \modulo{u_b (0+) - u_{o} (0+)}
    \\
    \leq \
    & K \,,
  \end{align*}
  where we exploit~\eqref{eq:app}. Hence, we recursively continue the
  estimate of~\eqref{eq:questa}:
  \begin{align*}
    & \norma{u^{3,\epsilon}_{n+1} (t) - u^{1,\epsilon}_n (t)}_{\L1 (\reali_+; \reali)}
    \\
    \leq \
    & \norma{u^{2,\epsilon}_{n+1} (T^3_{n+1}) - u^{1,\epsilon}_n (T^3_{n+1})}_{\L1 (\reali_+; \reali)}
      +
      L\, K \,
      \sup_{u \in \mathcal{U}}
      \modulo{\partial_u f (T^3_{n+1},u) - \partial_u f (T^1_n,u)} (t-T^3_{n+1})
    \\
    \leq \
    & 2 \, L \, K \,
      \norma{\partial_t \partial_u f}_{\L\infty ([0,T]\times\mathcal{U};\reali)}
      \left(\frac{T}{2^{n+1}}\right)^2 .
  \end{align*}
  Inserting the above estimate in~\eqref{eq:3} and letting
  $\epsilon \to 0$ yield the desired result.

  \smallskip

  \noindent$\star$ $j>1$. Assume first that
  $t \in [T^{j}_{n},T^{2j+1}_{n+1}]$.  An application of
  Proposition~\ref{prop:lipdep} to~\eqref{eq:21} and the inductive
  hypothesis yield
  \begin{eqnarray*}
    \norma{u^{2j}_{n+1} (t) - u^{j}_n (t)}_{\L1 (\reali_+; \reali)}
    & \leq
    & \norma{u^{2j-1}_{n+1} (T^j_n) - u^{j-1}_n (T^j_n)}_{\L1 (\reali_+; \reali)}
    \\
    & \leq
    & j L K \norma{\partial_t \partial_u f}_{\L\infty
      ([0,T]\times\mathcal{U};\reali)} \;
      \left(\frac{T}{2^{n+1}}\right)^2 .
  \end{eqnarray*}
  Let now $t \in [T^{2j+1}_{n+1},T^{j+1}_{n}]$. Compute
  \begin{align}
    \label{eq:25}
    & \norma{u^{2j+1}_{n+1} (t) - u^j_n (t)}_{\L1 (\reali_+; \reali)}
    \\
    \nonumber
    \leq \
    & \norma{u^{2j+1}_{n+1} (t) - u^{2j+1,\epsilon}_{n+1} (t)}_{\L1 (\reali_+; \reali)}
      +
      \norma{u^{2j+1,\epsilon}_{n+1} (t) - u^{j,\epsilon}_{n} (t)}_{\L1 (\reali_+; \reali)}
      +
      \norma{u^{j,\epsilon}_{n} (t) - u^j_{n} (t)}_{\L1 (\reali_+; \reali)}.
  \end{align}
  An application of Proposition~\ref{prop:lipdep} and of~\eqref{eq:22}
  to the term in the middle yields
  \begin{align}
    \label{eq:questaBis}
    & \norma{u^{2j+1,\epsilon}_{n+1} (t) - u^{j,\epsilon}_n (t)}_{\L1 (\reali_+; \reali)}
    \\
    \nonumber
    \leq \
    & \norma{u^{2j,\epsilon}_{n+1} (T^{2j+1}_{n+1}) - u^{j,\epsilon}_n (T^{2j+1}_{n+1})}_{\L1 (\reali_+; \reali)}
    \\
    \nonumber
    & +
      L
      \sup_{u \in \mathcal{U}}
      \modulo{\partial_u f^\epsilon (T^{2j+1}_{n+1},u) -
      \partial_u f^\epsilon (T^j_n,u)}
      (t-T^{2j+1}_{n+1})
    \\
    \nonumber
    & \qquad
      \times
      \left(
      \tv\left(u^{2j,\epsilon}_{n+1} (T^{2j+1}_{n+1})\right)
      +
      \tv\left(u_b^\epsilon; [T^{2j+1}_{n+1},t]\right)
      +
      \modulo{u_b^\epsilon (T^{2j+1}_{n+1}+) - u^{2j,\epsilon}_{n+1} (T^{2j+1}_{n+1},0+)}
      \right) .
  \end{align}
  The term in the latter line above can be estimated thanks
  to~\eqref{eq:8} and~\eqref{eq:app}
  \begin{align*}
    & \tv\left(u^{2j,\epsilon}_{n+1} (T^{2j+1}_{n+1})\right)
      +
      \tv\left(u^\epsilon_b; [T^{2j+1}_{n+1},t]\right)
      +
      \modulo{u^\epsilon_b (T^{2j+1}_{n+1}+) - u^{2j,\epsilon}_{n+1} (T^{2j+1}_{n+1},0+)}
    \\
    = \
    & V^{2j+1,\epsilon}_{n+1} -
      \tv \left(u^\epsilon_b; [t,T]\right)
    \\
    \leq \
    & \ldots
    \\
    \leq \
    & \tv(u^\epsilon_o)
      +
      \tv(u^\epsilon_b)
      +
      \modulo{u^\epsilon_b (0+) - u^\epsilon_{o} (0+)}
      -
      \tv \left(u^\epsilon_b; [t,T]\right)
    \\
    \leq \
    & \tv(u_o)
      +
      \tv\left(u_b, [0, t]\right)
      +
      \modulo{u_b (0+) - u_{o} (0+)}
    \\
    \leq \
    & K \,.
  \end{align*}
  Hence, we continue the estimate of~\eqref{eq:questaBis}:
  \begin{align*}
    & \norma{u^{2j+1,\epsilon}_{n+1} (t) - u^{j,\epsilon}_n (t)}_{\L1 (\reali_+; \reali)}
    \\
    \nonumber
    \leq \
    & \norma{u^{2j,\epsilon}_{n+1} (T^{2j+1}_{n+1}) - u^{j,\epsilon}_n (T^{2j+1}_{n+1})}_{\L1 (\reali_+; \reali)}
      +
      L\, K \,
      \sup_{u \in \mathcal{U}} \modulo{\partial_u f^\epsilon (T^{2j+1}_{n+1},u) - \partial_u f^\epsilon (T^j_n,u)}
      (t-T^{2j+1}_{n+1})
    \\
    \leq \
    & (j+1) \, L \, K \, \norma{\partial_t \partial_u f}_{\L\infty ([0,T]\times\mathcal{U};\reali)}
      \left(\frac{T}{2^{n+1}}\right)^2,
  \end{align*}
  which inserted in~\eqref{eq:25} yields the desired result when
  passing to the limit $\epsilon \to 0$.

  \smallskip

  This proves~\eqref{eq:23}, obtaining~\eqref{eq:O} with
  $\O = \frac{1}{4} \,L \, K \, \norma{\partial_t \partial_u
    f}_{\L\infty ([0,T]\times\mathcal{U}; \reali)} \, T^2$,
  so that $u_n$ is a Cauchy sequence in
  $\C0\left([0,T];\L1 (\reali_+; \reali)\right)$: call $u$ its limit.

  \paragraph{N.3)~$\L\infty$ --Estimate.}
  Observe moreover that for any $t \in [0,T]$, Point 2.~in
  Proposition~\ref{prop:2} implies that
  $\norma{u_n^i (t)}_{\L\infty (\reali_+;\reali)} \leq \max\left\{
    \norma{u_n^{i-1} (T^i_n)}_{\L\infty (\reali_+;\reali)},
    \norma{u_b}_{\L\infty ([T^i_n,t]; \reali)} \right\}$
  for $i = 0, \ldots, 2^n-1$, and this recursively yields
  \begin{equation}
    \label{eq:nolimit}
    \norma{u_n (t)}_{\L\infty (\reali_+;\reali)}
    \leq
    \max\left\{
      \norma{u_o}_{\L\infty (\reali_+;\reali)},
      \norma{u_b}_{\L\infty ([0,t]; \reali)}
    \right\}
  \end{equation}
  which, in the limit $n\to+\infty$, gives Point~1.

  \paragraph{N.4)~$u$ is a Weak Entropy Solution to~\eqref{eq:1}.} For
  any $\phi \in \Cc1 (\reali \times \reali; \reali_+)$ and any
  $k \in \reali$, since each $u^i_n$ is a solution to~\eqref{eq:21} in
  the sense of Definition~\ref{def:solsk},
  \begin{align}
    \label{eq:a}
    0 \leq \
    & \int_0^T \int_{\reali_+}
      \eta_k^\pm \left(u_n (t,x)\right)
      \, \partial_t \phi (t,x) \d{x}\d{t}
    \\
    \label{eq:b}
    & +
      \sum_{i=0}^{2^n-1}   \int_{T^i_n}^{T^{i+1}_n} \int_{\reali_+}
      \Phi_{k}^\pm\left(T^i_n, u^i_n (t,x)\right)
      \, \partial_x \phi (t,x)  \d{x} \d{t}
    \\
    \label{eq:c}
    & +
      \int_{\reali_+} \eta_k^\pm \left(u_o (x)\right) \, \phi (0,x) \d{x}
      -
      \int_{\reali_+} \eta_k^\pm \left(u_n (T,x)\right) \, \phi (T,x) \d{x}
    \\
    \label{eq:d}
    & +
      \sum_{i=0}^{2^n-1} \norma{\partial_u f (T^i_n,\cdot)}_{\L\infty (\mathcal{U};\reali)}
      \int_{T^i_n}^{T^{i+1}_n} \eta_k^\pm \left(u_b (t)\right) \,
      \phi (t,0) \d{t},
  \end{align}
  with $\eta^\pm_k$ and $\Phi^\pm_k$ as in~\eqref{eq:sken}.  We
  compute the limit as $n \to +\infty$ of the lines above separately.

  Since $\eta^\pm_k$ are Lipschitz continuous functions with Lipschitz
  constant $1$, we obtain
  \begin{align*}
    [\eqref{eq:a}]
    \leq \
    & \int_0^T \int_{\reali_+}
      \eta_k^\pm \left(u (t,x)\right)
      \, \partial_t \phi (t,x) \d{x}\d{t}
    \\
    & +
      \int_0^T \int_{\reali_+}
      \left(
      \eta_k^\pm \left(u_n (t,x)\right) - \eta_k^\pm \left(u (t,x)\right)
      \right)
      \, \partial_t \phi (t,x) \d{x}\d{t}
    \\
    \leq \
    & \int_0^T \int_{\reali_+}
      \eta_k^\pm \left(u (t,x)\right)
      \, \partial_t \phi (t,x) \d{x}\d{t}
      +
      \int_0^T \int_{\reali_+}
      \modulo{u_n (t,x) - u (t,x)}
      \, \partial_t \phi (t,x) \d{x}\d{t},
  \end{align*}
  and in the limit $n \to +\infty$ we get
  \begin{displaymath}
    \lim_{n \to +\infty} [\eqref{eq:a}] = \int_0^T \int_{\reali_+}
    \eta_k^\pm \left(u (t,x)\right)
    \, \partial_t \phi (t,x) \d{x}\d{t}.
  \end{displaymath}
  Concerning~\eqref{eq:b}, compute
  \begin{align*}
    \Phi_{k}^\pm\left(T^i_n, u^i_n (t,x)\right)
    = \
    & \Phi_{k}^\pm\left(t, u (t,x)\right)
     +
      \left(
      \Phi_{k}^\pm\left(T^i_n, u (t,x)\right)
      -
      \Phi_{k}^\pm\left(t, u (t,x)\right)
      \right)
    \\
    & +
      \left(
      \Phi_{k}^\pm\left(T^i_n, u^i_n (t,x)\right)
      -
      \Phi_{k}^\pm\left(T^i_n, u (t,x)\right)
      \right).
  \end{align*}
  To estimate the second term above, introduce the set
  $\mathcal{U}_k = \mathcal{U} (u_o,{u_b}_{|[0,t]},k)$ and compute
  \begin{align*}
    & \Phi_{k}^\pm\left(T^i_n, u (t,x)\right)
      -
      \Phi_{k}^\pm\left(t, u (t,x)\right)
    \\
    = \
    & \sgn\nolimits^\pm \left(u (t,x) - k \right)
      \left(
      f (T^i_n, u (t,x)) - f (T^i_n, k)
      - f (t, u (t,x)) + f (t, k)
      \right)
    \\
    \leq \
    & \norma{\partial_t \partial_u f }_{\L\infty ([0,t]\times \mathcal{U}_k;\reali)}
      \,
      \modulo{u (t,x) - k} \,
      \modulo{t-T^i_n}
    \\
    \leq \
    & \norma{\partial_t \partial_u f }_{\L\infty ([0,t]\times \mathcal{U}_k;\reali)}
      \,
      \mathop{\rm diam}(\mathcal{U}_k) \; \frac{T}{2^n} \,,
  \end{align*}
  so that
  \begin{align*}
    & \sum_{i=0}^{2^n - 1}
      \int_{T^i_n}^{T^{i+1}_n} \int_{\reali_+}
      \left(
      \Phi_{k}^\pm\left(T^i_n, u (t,x)\right)
      -
      \Phi_{k}^\pm\left(t, u (t,x)\right)
      \right)
      \modulo{\partial_x \phi (t,x)}
      \d{x} \d{t}
    \\
    \leq \
    & \norma{\partial_t \partial_u f }_{\L\infty ([0,t]\times \mathcal{U}_k;\reali)}
      \,
      \mathop{\rm diam}(\mathcal{U}_k) \,
      \int_{\reali_+}   \sup_{t \in \reali^+}
      \modulo{\partial_x \phi (t,x)} \d{x} \;
      \sum_{i=0}^{2^n - 1} \left(\frac{T}{2^n}\right)^2,
  \end{align*}
  which clearly vanishes in the limit $n \to +\infty$.

  To estimate the third term, observe that the maps $\Phi^\pm_k$ are
  Lipschitz continuous, see~\cite[Lemma~3]{Kruzkov}, with Lipschitz
  constant
  $\norma{\partial_u f}_{\L\infty ([0,t] \times
    \mathcal{U}_t;\reali)}$, so that
  \begin{displaymath}
    \Phi_{k}^\pm\left(T^i_n, u^i_n (t,x)\right)
    -
    \Phi_{k}^\pm\left(T^i_n, u (t,x)\right)
    \leq
    \norma{\partial_u f}_{\L\infty ([0,t] \times \mathcal{U}_t;\reali)} \;
    \modulo{u^i_n (t,x) - u (t,x)} \,.
  \end{displaymath}
  Hence, in the limit $n \to + \infty$ we have
  \begin{displaymath}
    \lim_{n \to + \infty}[\eqref{eq:b}] = \int_0^T \int_{\reali_+}
    \Phi^\pm_k \left(t,u (t,x)\right)
    \partial_x \phi (t,x)
    \d{x}\d{t}.
  \end{displaymath}

  Pass to~\eqref{eq:c}:
  \begin{align*}
    & - \int_{\reali_+} \eta_k^\pm \left(u_n (T,x)\right) \, \phi (T,x) \d{x}
    \\
    = \
    & -
      \int_{\reali_+} \eta_k^\pm \left(u (T,x)\right) \, \phi (T,x) \d{x}
      +
      \int_{\reali_+}
      \left(
      \eta_k^\pm \left(u (T,x)\right)-\eta_k^\pm \left(u_n (T,x)\right)
      \right)
      \, \phi (T,x) \d{x}
    \\
    \leq \
    & -
      \int_{\reali_+} \eta_k^\pm \left(u (T,x)\right) \, \phi (T,x) \d{x}
      +
      \int_{\reali_+} \modulo{u (T,x) - u_n (T,x)} \, \phi (T,x) \d{x},
  \end{align*}
  and the second term vanishes as $n \to + \infty$.

  Concerning~\eqref{eq:d}, we immediately get
  \begin{displaymath}
    [\eqref{eq:d}] \leq
    \norma{\partial_u f}_{\L\infty ([0,T]\times\mathcal{U};\reali)}
    \int_0^T \eta^\pm_k \left(u_b (t)\right) \, \phi (t,0) \d{t}.
  \end{displaymath}

  We thus proved that $u$ solves~\eqref{eq:1} in the sense of
  Definition~\ref{def:solsk}.

  \paragraph{N.5)~Lipschitz continuity in time.} Consider
  $t_1,t_2 \in [0,T]$, with $t_1< t_2$.  Assume first that there
  exists $i \in \{0, \ldots, 2^n-1\}$ such that
  $t_1,t_2 \in [T^i_n,T^{i+1}_n]$. Call
  $\mathcal{U}_2 = \mathcal{U}_{t_2} = \mathcal{U}
  (u_o,{u_b}_{|[0,t_2]})$.
  Exploiting the wave front tracking approximation, compute
  \begin{align*}
    & \norma{u^i_n (t_1) - u^i_n (t_2)}_{\L1 (\reali_+;\reali)}
    \\
    \leq \
    & \norma{u^i_n (t_1) - u^{i,\epsilon}_n (t_1)}_{\L1 (\reali_+;\reali)}
      +
      \norma{u^{i,\epsilon}_n (t_1) - u^{i,\epsilon}_n (t_2)}_{\L1 (\reali_+;\reali)}
      +
      \norma{u^{i,\epsilon}_n (t_2) - u^i_n (t_2)}_{\L1 (\reali_+;\reali)}.
  \end{align*}
  The first and the third term converge to $0$ as $\epsilon \to 0$. To
  estimate the term in the middle, apply Formula~\eqref{eq:7} and
  exploit~\eqref{eq:8}:
  \begin{align*}
    & \norma{u^{i,\epsilon}_n (t_1) - u^{i,\epsilon}_n (t_2)}_{\L1 (\reali_+;\reali)}
    \\
    \leq \
    & \norma{\partial_u f^\epsilon (T^i_n)}_{\L\infty (\mathcal{U}_2;\reali)}
      (t_2-t_1)
    \\
    & \times
      \left(
      \tv \! \left(u^{i-1,\epsilon}_n (T^i_n)\right)
      +
      \tv \! \left(u^\epsilon_b; [T^i_n, t_2]\right)
      +
      \modulo{u^\epsilon_b (T^i_n+) - u^{i-1,\epsilon}_n (T^i_n,0+)}
      \right)
    \\
    \leq \
    & \norma{\partial_u f}_{\L\infty ([0,t_2] \times \mathcal{U}_2;\reali)}
      (t_2 - t_1)
      \left(V^{i,\epsilon}_n -
      \tv\left(u^\epsilon_b;[t_2,T]\right)
      \right)
    \\
    \leq \
    & \ldots
    \\
    \leq \
    & \norma{\partial_u f}_{\L\infty ([0,t_2] \times \mathcal{U}_2;\reali)}
      (t_2-t_1)
      \left(
      \tv \! \left(u^{\epsilon}_o\right)
      +
      \tv \! \left(u^\epsilon_b; [0, t_2]\right)
      +
      \modulo{u^\epsilon_b (0+) - u^\epsilon_o (0+)}
      \right)
    \\
    \leq \
    &  C \, (t_2-t_1)
  \end{align*}
  where
  \begin{equation}
    \label{eq:Cnew}
    C
    =
    \norma{\partial_u f}_{\L\infty ([0,t_2] \times
      \mathcal{U}_2;\reali)} \left( \tv\left(u_o \right) + \tv\left(u_b;
        [0, t_2]\right) + \modulo{u_b (0+) - u_o (0+)} \right)  \,.
  \end{equation}

  Assume now that there exist $i,j \in \{0, \ldots, 2^n-1\}$, with
  $i<j$, such that $t_1 \in [T^i_n,T^{i+1}_n]$ and
  $t_2 \in [T^j_n,T^{j+1}_n]$.  Therefore, exploiting the previous
  computation, we have
  \begin{align*}
    & \!\norma{u_n (t_1) - u_n (t_2)}_{\L1 (\reali_+;\reali)}
    \\
    \leq \
    & \! \norma{u^i_n (t_1) - u^i_n (T^{i+1}_n)}_{\L1 (\reali_+;\reali)}
      \!\!\!+\!\!\!
      \sum_{k=i+1}^{j-1} \!\!
      \norma{u^{k}_n (T^{k}_n) - u^{k}_n (T^{k+1}_n)}_{\L1(\reali_+;\reali)}
      \!\!\! + \!
      \norma{u_n^j (T^j_n) - u^j_n (t_2)}_{\L1 (\reali_+;\reali)}
    \\
    \leq \
    & C \, (T^{i+1}_n-t_1) + \sum_{k=i+1}^{j-1} C \,
      (T^{k+1}_n-T^k_n) + C \, (t_2-T^j_n)
    \\
    = \
    & C \, (t_2-t_1),
  \end{align*}
  with $C$ as in~\eqref{eq:Cnew}. Let now $n$ tend to $+\infty$: we
  obtain
  $\norma{u (t_1) - u (t_2)}_{\L1 (\reali_+;\reali)} \leq C \,
  (t_2-t_1)$, completing the proof of Point~2.

  \paragraph{N.6)~Total Variation Estimate.} Thanks to the lower
  semicontinuity of the total variation and to Point~4.~in
  Proposition~\ref{prop:2}, we obtain the proof of of Point~3.:
  \begin{displaymath}
    \tv \left(u(t)\right)
    \leq
    \lim_{n \to + \infty} \tv \left(u_n (t)\right)
    \leq
    \tv (u_o) + \tv (u_b;[0,t])+ \modulo{u_b (0+) - u_o (0+)}.
  \end{displaymath}
\end{proofof}

\begin{proofof}{Theorem~\ref{thm:3t}}
  Let $u_n$ and $v_n$ be defined as in~\eqref{eq:uin}, so that for
  $i =0, \ldots, 2^n-1$, $u^i_n$ and $v^i_n$ solve the autonomous
  IBVPs
  \begin{displaymath}
    \left\{
      \begin{array}{l@{\qquad}r@{\,}c@{\,}l}
        \partial_t u^i_n + \partial_x f (T^i_n,u^i_n) = 0
        & (t,x)
        & \in
        & [T^i_n,T^{i+1}_n] \times \reali_+
        \\
        u^i_n (T^i_n,x) = u^{i-1}_n (T^i_n,x)
        & x
        & \in
        & \reali_+
        \\
        u^i_n (t, 0) = u_b (t)
        & t
        & \in
        & [T^i_n,T^{i+1}_n]
      \end{array}
    \right.
  \end{displaymath}
  and
  \begin{displaymath}
    \left\{
      \begin{array}{l@{\qquad}r@{\,}c@{\,}l}
        \partial_t v^i_n + \partial_x g (T^i_n,v^i_n) = 0
        &
          (t,x) & \in & [T^i_n,T^{i+1}_n] \times \reali_+
        \\
        v^i_n (T^i_n,x) = v^{i-1}_n (T^i_n,x)
        &
          x & \in & \reali_+
        \\
        v^i_n (t, 0) = u_b (t)
        &
          t & \in & [T^i_n,T^{i+1}_n].
      \end{array}
    \right.
    \!\!
  \end{displaymath}
  As in the proof of Proposition~\ref{prop:2t}, for
  $i=0, \ldots, 2^n-1$ let $u^{i,\epsilon}_n$ and $v^{i,\epsilon}_n$
  be the corresponding wave front tracking solutions.  Observe that,
  for all $t\in [0,T] $,
  \begin{equation}
    \label{eq:miserve}
    \norma{u (t) -v (t)}_{\L1 (\reali_+;\reali)} =
    \lim_{n \to +\infty}\norma{u_n (t) -v_n (t)}_{\L1 (\reali_+;\reali)}.
  \end{equation}
  Focus on the right hand side of~\eqref{eq:miserve}. There
  exists $i \in\{0, \ldots, 2^n-1\}$ such that
  $t \in [T^i_n,T^{i+1}_n]$. Therefore,
  \begin{align}
    \nonumber
    &  \norma{u_n (t) -v_n (t)}_{\L1 (\reali_+;\reali)}
    \\
    \nonumber
    = \
    & \norma{u^i_n (t) -v^i_n (t)}_{\L1 (\reali_+;\reali)}
    \\
    \label{eq:ilsolito}
    \leq \
    & \norma{u^i_n (t) -u^{i,\epsilon}_n (t)}_{\L1 (\reali_+;\reali)}
             +
             \norma{u^{i,\epsilon}_n (t) -v^{i,\epsilon}_n (t)}_{\L1 (\reali_+;\reali)}
             +
             \norma{v^{i,\epsilon}_n (t) -v^i_n (t)}_{\L1 (\reali_+;\reali)}
  \end{align}
  The first and the third term in~\eqref{eq:ilsolito} converge to $0$
  as $\epsilon \to 0$, while an application of
  Proposition~\ref{prop:lipdep} and of Formula~\eqref{eq:22} allows to
  estimate the second term:
  \begin{align}
    \nonumber
    & \norma{u^{i,\epsilon}_n (t) -v^{i,\epsilon}_n (t)}_{\L1 (\reali_+;\reali)}
    \\
    \label{eq:29}
    \leq \
    & \norma{u^{i-1,\epsilon}_n (T^i_n) - v^{i-1,\epsilon}_n (T^i_n)}_{\L1 (\reali_+;\reali)}
    \\
    \nonumber 
    & +
      \max\left\{1,
      \norma{\partial_u g}_{\L\infty ([0,t] \times \mathcal{U}; \reali)}\right\}
      \norma{\partial_u (f-g)}_{\L\infty ([0,t] \times \mathcal{U}; \reali)}
    \\
    \label{eq:31}
    &  \times
      \left(\tv\left(v^{i-1,\epsilon}_n (T^i_n)\right) +
      \tv\left(u^\epsilon_b; [T^i_n,t]\right)
      + \modulo{u^\epsilon_b (T^i_n+) - v^{i-1,\epsilon}_n (T^i_n,0+)}
      \right) \!\left(t-T^i_n\right),
  \end{align}
  where $\mathcal{U} = \mathcal{U}(u_o, {u_b}_{|[0,t]})$ as
  in~\eqref{eq:19}, thanks to~\eqref{eq:app} and~\eqref{eq:nolimit}.
  Observe that the first term in~\eqref{eq:31} can be estimated
  by~\eqref{eq:8}:
  \begin{eqnarray}
    \nonumber
    &
    & \tv\left(v^{i-1,\epsilon}_n (T^i_n)\right) +
      \tv\left(u^\epsilon_b; [T^i_n,t]\right)
      + \modulo{u^\epsilon_b (T^i_n+) - v^{i-1,\epsilon}_n (T^i_n,0+)}
    \\
    \nonumber
    & =
    & V^{i-1,\epsilon}_n -\tv\left(u^\epsilon_b; [t,T]\right)
    \\
    \nonumber
    & \leq
    & \ldots
    \\
    \label{eq:31ok}
    &\leq
    & \tv\left(u_o \right) +
      \tv\left(u_b; [0,t]\right)
      + \modulo{u_b (0+) - u_o (0+)},
  \end{eqnarray}
  where in the last step we
  exploit~\eqref{eq:app}. Concerning~\eqref{eq:29}, we proceed
  recursively:
  \begin{align}
    \nonumber
    & [\eqref{eq:29}]
    \\
    \nonumber
    \leq \
    & \norma{u^{i-2,\epsilon}_n
      (T^{i-1}_n) - v^{i-2,\epsilon}_n (T^{i-1}_n)}_{\L1 (\reali_+;\reali)}
    \\
    \nonumber
    & + \max\{1, \norma{\partial_u g}_{\L\infty ([0,t]\times
      \mathcal{U}; \reali)}\} \norma{\partial_u (f-g)}_{\L\infty
      ([0,t]\times \mathcal{U}; \reali)}
    \\
    \nonumber
    &  \times \!  \left[
      \tv\! \left(v^{i-2,\epsilon}_n (T^{i-1}_n)\right)\! +
      \tv\! \left(u^\epsilon_b; [T^{i-1}_n,T^i_n]\right)\!  + \modulo{u^\epsilon_b
      (T^{i-1}_n+) - v^{i-2,\epsilon}_n (T^{i-1}_n,0+)}
      \right]
      \!\! \left(T^i_n - T^{i-1}_n\right)
    \\
    \nonumber
    \leq \
    & \norma{u^{i-2,\epsilon}_n (T^{i-1}_n) - v^{i-2,\epsilon}_n
      (T^{i-1}_n)}_{\L1 (\reali_+;\reali)}
    \\
    \label{eq:29quasiok}
    & + \max\{1, \norma{\partial_u g}_{\L\infty ([0,t]\times
      \mathcal{U}; \reali)}\} \norma{\partial_u (f-g)}_{\L\infty
      ([0,t]\times \mathcal{U}; \reali)}
    \\
    \nonumber
    &  \times \left( \tv\left(u_o \right) + \tv\left(u_b;
      [0,t]\right) + \modulo{u_b (0+) - u_o (0+)}
      \right)\left(T^i_n-T^{i-1}_n\right).
  \end{align}
  Therefore, thanks to~\eqref{eq:31ok} and~\eqref{eq:29quasiok}, we
 obtain the estimate of~\eqref{eq:29}--\eqref{eq:31}:
  \begin{equation}
    \label{eq:uffa}
    \begin{aligned}
      \norma{u^{i,\epsilon}_n (t) -v^{i,\epsilon}_n (t)}_{\L1
        (\reali_+;\reali)}\!  \leq \ & \max\{1, \norma{\partial_u
        g}_{\L\infty ([0,t]\times \mathcal{U}; \reali)}\}
      \norma{\partial_u (f-g)}_{\L\infty ([0,t]\times \mathcal{U};
        \reali)}
      \\
      & \quad \times \left( \tv\left(u_o \right) + \tv\left(u_b;
          [0,t]\right) + \modulo{u_b (0+) - u_o (0+)} \right) t.
    \end{aligned}
  \end{equation}
  Inserting~\eqref{eq:uffa} in~\eqref{eq:ilsolito} and letting
  $\epsilon \to 0$, together with~\eqref{eq:miserve}, concludes the
  proof.
\end{proofof}

\begin{remark}
  {\rm If $T = +\infty$: the above constructions can be completed on
    any time interval $[0,T]$.  Thus, for any $T,T'$, we obtain two
    maps $u_{T}$ and $u_{T'}$ such that $u_{T'} (t) = u_T (t)$ for
    $t \in [0,\min\{T, T'\}]$, by Proposition~\ref{prop:lipdep},
    and the above procedures can be extended to $t \in \reali_+$.}
\end{remark}

\medskip

\noindent\textbf{Acknowledgment:} The present work was supported by
the PRIN 2012 project \emph{Nonlinear Hyperbolic Partial Differential
  Equations, Dispersive and Transport Equations: Theoretical and
  Applicative Aspects}; by the INDAM--GNAMPA 2015 project
\emph{Balance Laws in the Modeling of Physical, Biological and
  Industrial Processes} and by the MATHTECH project funded by
CNR--INDAM.

\small{

  \bibliography{generale}

  \bibliographystyle{abbrv}

}

\end{document}